\newcommand{\R}{\mathbb{R}}
\newcommand{\lp}{\left(}
\newcommand{\rp}{\right)}
\newcommand{\N}{\mathbb{N}}
\newcommand{\Sc}{\mathcal S(\R)}
\newcommand{\Scd}{\mathcal S(\R^d)}
\newcommand{\I}{\leqslant}
\newcommand{\eps}{\varepsilon}
\newcommand{\s}{\geqslant}
\newcommand{\E}{\mathbb{E}}
\newcommand{\PR}{\mathbb{P}}
\newcommand{\tem}{\mathcal S'(\R)}
\newcommand{\temd}{\mathcal S'(\R^d)}
\newcommand{\m}{\mathcal}
\newcommand{\pam}{\textbf{PAM}}
\newcommand{\dd}{\, \mathrm{d}}
\numberwithin{equation}{section}
\newtheorem{theo}{Theorem}[section]
\newtheorem{prop}[theo]{Proposition}
\newtheorem{rem}[theo]{Remark}
\newtheorem{df}[theo]{Definition}
\newtheorem{lem}[theo]{Lemma}
\newtheorem{cor}[theo]{Corollary}
\newcommand{\scal}[2]{\left \langle #1 , #2 \right \rangle }
\newcommand{\limscd}[1]{\overset{\Scd}{\longrightarrow}}
\newcommand{\lims}[1]{\to}
\newcommand{\ssup}[1]{\underset{#1}{\sup}}
\newcommand{\leb}[2]{\text{Leb}_{#1}\lp #2 \rp}
\begin{document}
\begin{center}{\bf\Large L\'evy Processes and L\'evy White Noise
\vskip 12pt

 as Tempered Distributions
}
\vskip 16pt
{\bf Robert C.~Dalang}\footnote[1]{Institut de math\'ematiques,
Ecole Polytechnique F\'ed\'erale de Lausanne, Station 8,
CH-1015 Lausanne, Switzerland.
Emails: robert.dalang@epfl.ch, thomas.humeau@epfl.ch ~~ 

Partially supported by the Swiss National Foundation for Scientific Research.

 {\em MSC 2010 Subject Classifications:} Primary 60G51; secondary 60G60, 60G20, 60H40.
\vskip 12pt

 {\em Key words and phrases.} L\'evy white noise, L\'evy process, L\'evy random field, tempered distribution, positive absolute moment.

  {\em Abbreviated Title.} L\'evy Noise as a Tempered Distribution
}
and {\bf Thomas Humeau}$^1$
\vskip 16pt
Ecole Polytechnique F\'ed\'erale de Lausanne
\vskip 16pt

\end{center}

\begin{abstract}
 We identify a necessary and sufficient condition for a L\'evy white noise to be a tempered distribution. More precisely, we show that if the L\'evy measure associated with this noise has a positive absolute moment, then the L\'evy white noise almost surely takes values in the space of tempered distributions. If the L\'evy measure does not have a positive absolute moment of any order, then the event on which the L\'evy white noise is a tempered distribution has probability zero. 
 \end{abstract}
 
\section{Introduction}

   It is well-known that Gaussian white noise in $\R^d$ is a generalized random field that can be viewed as a random element of the space $\temd$ of tempered (Schwartz) distributions \cite{gelfand,spdewalsh}. It is natural to ask whether the same is true of L\'evy white noise?

   This abstract mathematical question was posed to the authors by M.~Unser and J.~Fageot, who were interested in developing sparse statistical models for signal and image processing \cite{fageot}. For this, they considered generalized random fields with values in $\temd$. L\'evy white noises provide interesting examples of generalized random fields, and together with A.~Amini, they showed in \cite[Theorem 3]{fageot} that a sufficient condition for L\'evy white noise to take values in $\temd$ is that the associated L\'evy measure have a positive absolute moment. The main result of this paper is that this condition is, in fact, necessary and sufficient.

   The result of Unser and Fageot improves several other partial results that appear in the mathematical literature. In \cite{lee}, L\'evy white noise is studied as a natural generalization of Gaussian white noise, and the authors showed that this process takes values in $\temd$ if the associated L\'evy measure has a first absolute moment. In order to develop a white noise theory for L\'evy noise, Di Nunno et al.~\cite{dinunno} consider L\'evy white noise with a L\'evy measure that has a finite second moment. In \cite[Theorem 4.1]{shih}, Y.-L.~Lee and H.-H.~Shih give a necessary and sufficient condition for L\'evy white noise to take values in $\temd$; however, this condition involves checking the continuity of a functional and does not translate directly into a condition on the L\'evy measure. Finally, knowing that L\'evy white noise takes values in $\temd$ is useful in the study of stochastic partial differential equations driven by L\'evy noise, as in \cite{bernt}, which again considers the case where the L\'evy measure is square integrable.
	
	In order to address the question of Unser and Fageot, we first consider in Section \ref{levy_dim_one} the case of dimension $d=1$. In this case, L\'evy white noise can be viewed as the derivative of a L\'evy process, for which there is a large literature (see \cite{applebaum,cont,sato}, for instance). The question of whether or not sample paths of a L\'evy process belong to $\temd$ reduces essentially to whether or not this process is slowly growing (that is, has no more than polynomial growth: see Remark \ref{rkpolgrowth}). We make use of the L\'evy-It\^o decomposition $X_t=\gamma t +\sigma W_t + X^P_t+ X^M_t$ of a L\'evy process $(X_t)_{t\in \R_+}$, in which $(W_t)$ is a standard Brownian motion, $(X^P_t)$ is a compound Poisson process (term containing the large jumps of $X$), and $(X^M_t)$ is a square integrable pure-jump martingale (term containing the small jumps of $X$). Using the strong law of large numbers for L\'evy processes with a first moment, we show that the L\'evy process $(\gamma t +\sigma W_t +X^M_t)$ is always slowly growing, so the question reduces to the study of the process $(X^P_t)$. 
	
	A first result (Proposition \ref{growth}) is that a compound Poisson process can belong to $\temd$ if and only if it is slowly growing. The question now reduces to determining when a compound Poisson process is slowly growing, which is addressed in the literature (see \cite[Section 48]{sato}), but for which we give a direct answer using the law of large numbers of Kolmogorov, Marcinkiewicz and Zygmund \cite{kallenberg} (see Proposition \ref{longterm}).
	
	With the result for L\'evy processes in hand, we then easily deduce the corresponding result for L\'evy white noise (see Theorem \ref{tempered}). Since \cite{fageot} constructs L\'evy white noise as a measure on the cylinder $\sigma$-field of $\temd$ via the Bochner-Minlos theorem \cite{gelfand}, we relate our result to that of \cite{fageot} by taking care to show that L\'evy noise actually defines a random variable with values in $\temd$ equipped with its Borel $\sigma$-field (which is in fact equal to the cylinder $\sigma$-field, see the proof of Corollary \ref{measurable}).
	
	In Section \ref{levy_dim_d}, we turn to L\'evy random fields and L\'evy noise on $\R^d$, with $d \geq 1$. Again, in the case of a L\'evy random field, the L\'evy-It\^o decomposition applies (see \cite{adler,walshdalang}), and the three terms with moments greater than $1$ always have sample paths with values in $\temd$ (see Proposition \ref{martingaled}). For the term containing the large jumps, which is a compound Poisson sheet $X^P$,  we are hampered by the fact that even if there are multiparameter analogues of the law of large numbers of Kolmogorov, Marcinkiewicz and Zygmund (see \cite{klesov}), multiparameter random walks cannot be easily used to represent compound Poisson sheets. Therefore, we make use of our study in dimension $1$ by considering the L\'evy random field $X^P$ along a line parallel to a coordinate axis. This defines a (one-parameter) L\'evy process $L$. A key technical step is to identify (in Lemma \ref{sequence}) a sequence of test functions $(\varphi_n) \subset \Scd$ with polynomially growing norms such that $\langle X^P, \varphi_n\rangle$ give precisely the value of $L$ at the time of its $n$-th jump. This leads to the characterization of L\'evy white noises and random fields that take values in $\temd$ (see Theorem \ref{temperedd} and Corollary \ref{cor3.15}).

We now introduce the main notation that will be used throughout the paper. Let $d\in \N \setminus \{0\}$. For a multi-index $\alpha=(\alpha_1,...,\alpha_d)\in \N^d$, a smooth function $\varphi: \R^d\rightarrow \R$, and $t\in \R^d$, we write $t^\alpha=\prod_{i=1}^d t_i^{\alpha_i}$ and $\varphi^{(\alpha)}=\frac{\partial^{|\alpha|}\varphi}{\partial t_1^{\alpha_1}\cdot\cdot\cdot\partial t_d^{\alpha_d}}$, where $|\alpha|=\sum_{i=1}^d \alpha_i$. For $\beta, \gamma \in \N^d$, we write $\binom \beta \gamma=\frac{\beta !}{\gamma ! (\beta-\gamma)!}$, where $\beta !=\prod_{i=1}^d (\beta_i !)$. When $t\in \R^d$, we also write $|t|$ for the Euclidian norm and the meaning should be clear from the context. The Schwartz space is denoted $\Scd$ and is the space of all smooth functions $\varphi: \R^d \rightarrow \R$ such that, for all multi-indexes $\alpha$, $\beta \in \N^d$,  we have
 $$\ssup{t\in \R^d} \left | t^\alpha \varphi^{(\beta)}(t) \right | <+\infty \, .$$
 This space is equipped with the topology defined by the family of norms $\m N_p$, where, for all $p\in \N$ and $\varphi \in \Scd$,
 $$ \mathcal N_p(\varphi)= \underset{|\alpha |, |\beta | \I p}{\sum} \, \ssup{t\in \R^d} \left | t^\alpha \varphi^{(\beta)}(t) \right | \, .$$
 A basis of neighborhoods of the origin for this topology is given by the family 
\begin{equation}\label{neighborhood}
  \lp \left \{ \varphi \in \Scd :  \mathcal N_p(\varphi) < \varepsilon \right \} \rp_{p\in \N, \, \varepsilon >0} \, ,
\end{equation}
since such a basis is usually given by finite intersections of sets of this form, and for all $p\in \N, \ \varphi \in \Scd$, $\m N_p(\varphi) \I \m N_{p+1}(\varphi)$.  A sequence $(\varphi_n)_n$ converges to zero in $\Scd$ if for all $p\in \N, \ \mathcal N_p(\varphi_n)  \to 0$ as $n\to +\infty$. The space of tempered distributions is denoted $\mathcal S'(\R^d)$ and is the space of all continuous linear functionals on $\Scd$. Equivalently, $u\in \mathcal S'(\R^d)$ if and only if there is an integer $p\s 0$ and a constant $C$ such that for all $\varphi \in \Scd$, 
 $$ \left |\scal{u}{\varphi}\right | \I C \mathcal N_p(\varphi) \, .$$
\begin{rem}\label{rkpolgrowth}
 We say that a function $f:\R^d \to \R$ is slowly growing if $\sup_{t\in \R^d}|f(t)|(1+|t|)^{-\alpha}<\infty$ for some $\alpha\s 0$. In this case, $f$ defines a tempered distribution by the formula $\scal f \varphi =\int_{\R^d} f(t) \varphi(t) \dd t$, for all $\varphi \in \Scd$.
\end{rem}

We will also consider the space $\mathcal D(\R^d)$ of smooth compactly supported functions and its topological dual $\mathcal D'(\R^d)$, the space of distributions (see for example \cite{distributions_schwartz} or \cite{treves}). 

\section{L\'evy processes and L\'evy noise in $\tem$}\label{levy_dim_one}

\indent\indent 
 A L\'evy process $(X_t)_{t\in \R_+}$ is a real valued stochastic process such that $X_0=0$ almost surely, $X$ has stationary and independent increments and $X$ is stochastically continuous (that is, for any $s\s 0, \ |X_t-X_s| \to 0$ in probability as $t\to s$). Every L\'evy process has a \textit{càdlàg} (right continuous with left limits) modification by  \cite[Theorem 11.5]{sato}, and we will always consider such a modification in the following. An important feature of L\'evy processes is the L\'evy-Itô decomposition: for a L\'evy process $X$ there exists a unique triplet $(\gamma, \sigma, \nu)$, where $\sigma \s 0$, $\gamma\in \R$, and $\nu$ is a L\'evy measure (in particular, $\nu$ is nonnegative and $\int_{\R\backslash \{0\}} \lp 1\wedge |x|^2 \rp\nu( \! \dd x)<+\infty$), such that the jump measure of $X$ (denoted by $J_X$) is a Poisson random measure on $\R_+\times \R\backslash\{0\}$ with intensity $\dd t \, \nu( \! \dd x)$ and $X$ has the decomposition $X_t=\gamma t +\sigma W_t + X^P_t+ X^M_t$. In this decomposition, $W$ is a standard Brownian motion, $X^P_t= \int_{s\in [0,t], \, |x|>1}x J_X ( \! \dd s, \dd x)$ is a compound Poisson process (the term containing the large jumps of $X$), and $X^M_t= \int_{s\in [0,t], \, |x|\I1}x \lp J_X ( \! \dd s, \dd x)-\dd s \nu( \! \dd x) \rp$ is a square integrable martingale (the term containing the small jumps of $X$).

Since a L\'evy process is \emph{càdlàg}, it is locally Lebesgue integrable, and defines almost surely an element of $\m D'(\R)$ via the $L^2$-inner product
$$\scal{X}{\varphi}=\int_{\R_+} X_t \varphi (t) \dd t\, , \qquad \text{for all} \ \varphi \in \m D(\R) .$$
For any \emph{càdlàg} process $L$, we define the following subset of $\Omega$:
\begin{align}\label{omega_set}
  \Omega_{L}= \left \{ \omega \in \Omega:  L(\omega)\in \tem \right \}\, ,
\end{align}
with the understanding that when $L(\omega) \in \tem$, the continuous linear functional associated with $L(\omega)$ is given by $\scal{L(\omega)}{\varphi}=\int_{\R_+} L_t(\omega)\varphi(t) \dd t$, for all $\varphi \in \Sc$.

\subsection{The case of an integrable L\'evy process}
\indent\indent
In order to handle the first three terms in the L\'evy-Itô decomposition, we first consider the case where $X$ itself has a finite first absolute moment.
\begin{prop}\label{martingale}
Let $X$ be a L\'evy process for which $\E\lp |X_1| \rp <+\infty$. Then the set $\Omega_X$ defined in \eqref{omega_set} (with 
$L$ there replaced by $X$) has probability one.
\end{prop}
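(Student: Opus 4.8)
The key fact about a L\'evy process $X$ with $\E(|X_1|)<+\infty$ is the strong law of large numbers: $X_t/t \to \E(X_1)$ almost surely as $t\to+\infty$ (this is classical, see e.g. \cite[Theorem 36.5]{sato} or the law of large numbers in \cite{kallenberg}). This immediately gives that, almost surely, $\limsup_{t\to+\infty}|X_t|/t < +\infty$, hence there is a (random) constant $C_1$ with $|X_t|\le C_1(1+t)$ for all $t$ large enough. On any compact interval $[0,T]$, the \emph{c\`adl\`ag} sample path is bounded (a c\`adl\`ag function on a compact interval is bounded), so there is a further random constant with $|X_t|\le C(1+t)$ for \emph{all} $t\in\R_+$. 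In other words, almost every sample path of $X$ is slowly growing in the sense of Remark \ref{rkpolgrowth} (with exponent $\alpha=1$).

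Once we know the path is slowly growing, Remark \ref{rkpolgrowth} tells us that $X(\omega)$ defines a tempered distribution via $\scal{X(\omega)}{\varphi}=\int_{\R_+}X_t(\omega)\varphi(t)\dd t$; the integral converges absolutely for every $\varphi\in\Sc$ because $|X_t(\omega)\varphi(t)|\le C(1+t)|\varphi(t)|$ and $(1+t)|\varphi(t)|$ is integrable on $\R_+$ (it is dominated by $\m N_p(\varphi)(1+t)^{-2}$ for $p\ge 3$, say, up to a constant). Moreover the resulting functional is continuous: one gets a bound of the form $|\scal{X(\omega)}{\varphi}|\le C(\omega)\,\m N_3(\varphi)$ with the same random constant, which is exactly the continuity criterion for membership in $\tem$ recalled before Remark \ref{rkpolgrowth}. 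Therefore $\omega\in\Omega_X$ for every $\omega$ in the almost-sure event on which the law of large numbers holds, and we conclude $\PR(\Omega_X)=1$.

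The only point requiring a little care is measurability — that $\Omega_X$ is indeed an event — but this is not needed to establish that it contains an almost-sure set, so for the statement as given it suffices to exhibit the full-measure subset above; the precise measurability of $\Omega_X$ (and the fact that it is a genuine element of the $\sigma$-field) is handled separately in the proof of Corollary \ref{measurable}. The main ``obstacle'' is really just invoking the correct form of the strong law of large numbers for L\'evy processes and then carefully passing from ``$X_t/t$ bounded along $t\to\infty$'' plus ``locally bounded c\`adl\`ag path'' to a uniform linear-growth bound valid on all of $\R_+$ — after that, everything reduces to the elementary estimate showing a slowly growing function pairs continuously against Schwartz functions.
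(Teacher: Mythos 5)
Your proof is correct and follows essentially the same route as the paper: the strong law of large numbers for L\'evy processes with $\E(|X_1|)<+\infty$ gives sublinear growth at infinity, the \emph{c\`adl\`ag} property gives local boundedness, and together these show the path is slowly growing, so Remark \ref{rkpolgrowth} applies. The extra details you supply (the explicit pairing estimate against $\m N_p(\varphi)$ and the remark about measurability being deferred) are consistent with what the paper does implicitly.
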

\begin{proof}
By the strong law of large numbers for L\'evy processes such that $\E\lp |X_1| \rp <+\infty$ in \cite[Theorem 36.5]{sato}, $t^{-1}|X_t| \to \E(X_1)$ almost surely as $t\to +\infty$. It follows that $X$ is sublinear and locally bounded (by the \textit{càdlàg} property) almost surely, so it is slowly growing, which concludes the proof by Remark \ref{rkpolgrowth}.
\end{proof}

\begin{cor} \label{martingalecor}
 Let $X$ be a L\'evy process with characteristic triplet $(\gamma, \sigma, \nu)$ and L\'evy-Itô decomposition $X_t=\gamma t + \sigma W_t +X^P_t+ X^M_t$. Let $Y_t=\gamma t +\sigma W_t+X^M_t$. Then $Y$ is slowly growing a.s., and the set $\Omega_Y$ defined as in \eqref{omega_set} has probability one.
 \end{cor}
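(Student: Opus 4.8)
The plan is to reduce the statement to Proposition \ref{martingale} by checking that $Y$ is itself a L\'evy process with a finite first absolute moment. First I would note that, in the L\'evy-It\^o decomposition, the Brownian part $(\sigma W_t)$ and the small-jumps martingale $(X^M_t)$ are independent L\'evy processes (see \cite{sato}); hence $Y_t=\gamma t +\sigma W_t + X^M_t$, being the sum of a deterministic drift and two independent L\'evy processes, satisfies $Y_0=0$ a.s., has stationary and independent increments, and is stochastically continuous, so it is a L\'evy process.

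Next I would verify that $\E(|Y_1|)<+\infty$. Writing $|Y_1|\I |\gamma|+\sigma|W_1|+|X^M_1|$, we have $\E(|W_1|)<+\infty$ by an elementary Gaussian computation, and, since $X^M$ is a square integrable martingale by hypothesis, $\E(|X^M_1|)\I \lp \E((X^M_1)^2)\rp^{1/2}<+\infty$. Therefore $\E(|Y_1|)\I |\gamma|+\sigma\,\E(|W_1|)+\lp \E((X^M_1)^2)\rp^{1/2}<+\infty$.

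With these two facts in hand, Proposition \ref{martingale} applied to $Y$ gives immediately that $\Omega_Y$ has probability one. Moreover, the argument in the proof of Proposition \ref{martingale} shows that under $\E(|Y_1|)<+\infty$ the process $Y$ is sublinear (indeed $t^{-1}|Y_t|\to |\E(Y_1)|$ a.s.\ as $t\to+\infty$, with $\E(Y_1)=\gamma$) and locally bounded by the \textit{c\`adl\`ag} property, hence slowly growing a.s.; this is the remaining claim, and by Remark \ref{rkpolgrowth} it is consistent with $\Omega_Y$ having probability one.

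I do not expect a genuine obstacle here: the only point requiring a little care is the assertion that $Y$ is a L\'evy process, which rests on the independence of the components $W$ and $X^M$ in the L\'evy-It\^o decomposition. If one prefers to avoid invoking this, one can instead apply the strong law of large numbers for L\'evy processes \cite[Theorem 36.5]{sato} separately to the square integrable, mean-zero L\'evy process $X^M$ and use $W_t/t\to 0$ a.s.\ for Brownian motion, obtaining $Y_t/t\to\gamma$ a.s.\ directly, which together with local boundedness and Remark \ref{rkpolgrowth} yields both the slow growth of $Y$ and that $\Omega_Y$ has probability one.
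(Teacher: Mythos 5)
Your proposal is correct and follows essentially the same route as the paper: both reduce the claim to Proposition \ref{martingale} by observing that $\sigma W + X^M$ is a sum of independent square-integrable L\'evy processes, hence has a first absolute moment (the paper applies the proposition to $\tilde Y = \sigma W + X^M$ and then adds the slowly growing drift $\gamma t$, whereas you apply it directly to $Y$; this is an immaterial difference). Your verification of $\E(|Y_1|)<+\infty$ and your remark that the proof of Proposition \ref{martingale} already yields the slow-growth statement are both accurate.
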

\begin{proof}
The process $\tilde Y=\sigma W+X^M$ is a sum of two independent square integrable L\'evy processes with mean zero. Hence $\tilde Y$ verifies the hypothesis of the Proposition \ref{martingale}, therefore it defines a tempered distribution a.s. Since $\tilde Y$ and $Y$ differ by a slowly growing function $t\mapsto \gamma t$, we deduce that $Y$ is a tempered distribution almost surely.  
\end{proof}

\subsection{Growth of a compound Poisson process}\label{sectionpoisson}

\indent\indent
In view of Corollary \ref{martingalecor}, it remains to determine when a compound Poisson process belongs to $\tem$. We begin with two key results on the growth of a compound Poisson process. Let $X_t=\sum_{i=1}^{N_t} Y_i$ be a compound Poisson process, where $N$ is a Poisson process with parameter $\lambda$ that is independent of the sequence $(Y_i)_{i\s 1}$ of i.i.d.~random variables. Let $S_0=0$ and $(S_n)_{n\s 1}$ be the sequence of jump times of $X$ and let $T_n=S_n-S_{n-1}$. Also, let $Z_n=X_{S_n}=\sum_{i=1}^n Y_i$. We first show that on the set $\Omega_X$, the compound Poisson process is slowly growing.
\begin{prop}\label{growth}
 Let $X$ be the compound Poisson process defined above and $\Omega_X$ the set defined in \eqref{omega_set}. There is a set $A$ of probability one such that for all $\omega\in \Omega_X \cap A$, the function $t\mapsto X_t(\omega)$ is slowly growing.
\end{prop}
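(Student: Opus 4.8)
The plan is to exploit the fact that a compound Poisson process is piecewise constant — it equals $Z_n$ on the interval $[S_n,S_{n+1})$ — so that testing the distribution $X$ against a smooth bump function supported strictly inside this interval returns $Z_n$ up to normalization. Bounding the Schwartz norm of that bump will then show that $|Z_n|$, and hence $|X_t|$ for $t$ in that interval, grows at most polynomially in $n$; since $S_n$ grows linearly, this is polynomial growth in $t$, i.e.\ slow growth.

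First I would fix the almost sure set $A$. Recall that $T_n=S_n-S_{n-1}$ are i.i.d.\ $\mathrm{Exp}(\lambda)$ and $S_n=\sum_{k=1}^n T_k$. By the strong law of large numbers, $S_n/n\to 1/\lambda$ almost surely; on that event one has $n/(2\lambda)\le S_n\le 2n/\lambda$ for $n$ large, in particular $S_n\to+\infty$. Moreover $\PR(T_n<n^{-2})=1-e^{-\lambda/n^{2}}\le\lambda n^{-2}$ is summable, so by the Borel–Cantelli lemma, almost surely $T_n\ge n^{-2}$ for all $n$ large. Let $A$ be the intersection of these two almost sure events. Note also that, for every $\omega$, the path $t\mapsto X_t(\omega)$ is bounded on compact sets, having only finitely many jumps there.

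Next, fix $\omega\in\Omega_X\cap A$ and let $p\in\N$ and $C<\infty$ (both possibly depending on $\omega$) be such that $|\scal{X}{\varphi}|\le C\,\mathcal N_p(\varphi)$ for all $\varphi\in\Sc$. Fix once and for all a nonnegative $\psi\in\Sc$ supported in $(0,1)$ with $\int\psi=1$, and set $\varphi_n(t)=T_{n+1}^{-1}\,\psi\big((t-S_n)/T_{n+1}\big)$. Then $\varphi_n\in\Sc$ is supported in $(S_n,S_{n+1})$ and has integral $1$, and since $X_t=Z_n$ throughout that interval, $\scal{X}{\varphi_n}=Z_n$. On the support of $\varphi_n$ one has $|t|\le S_{n+1}\le 2(n+1)/\lambda$, while $|\varphi_n^{(\beta)}(t)|\le T_{n+1}^{-1-\beta}\|\psi^{(\beta)}\|_{\infty}$; summing over $|\alpha|,|\beta|\le p$ and using $T_{n+1}\ge(n+1)^{-2}$ (valid for $n$ large since $\omega\in A$) yields a constant $c=c(p,\psi)$ with
$$\mathcal N_p(\varphi_n)\le c\,(n+1)^{p}\,T_{n+1}^{-(p+1)}\le c\,(n+1)^{3p+2}.$$
Hence $|Z_n|=|\scal{X}{\varphi_n}|\le Cc\,(n+1)^{3p+2}$ for all $n$ large. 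Finally, given $t$ large, choose $n$ with $t\in[S_n,S_{n+1})$; then $X_t=Z_n$ and $t\ge S_n\ge n/(2\lambda)$, so $n\le 2\lambda t$ and $|X_t|=|Z_n|\le C'(1+t)^{3p+2}$. Together with boundedness on compacts, this shows $t\mapsto X_t(\omega)$ is slowly growing (with exponent depending on $\omega$ only through $p$), which is the claim.

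The only genuinely delicate point is the estimate on $\mathcal N_p(\varphi_n)$: the interjump interval $[S_n,S_{n+1})$ may be short, which inflates the derivative norms like $T_{n+1}^{-(p+1)}$, so one must know that almost surely $T_{n+1}$ is not smaller than a fixed negative power of $n$. This is precisely where the Borel–Cantelli step enters, and it is the heart of the argument.
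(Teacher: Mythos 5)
Your proof is correct and follows essentially the same strategy as the paper: random bump functions of integral one concentrated just to the right of $S_n$, the continuity estimate $|\scal{X}{\varphi_n}|\le C\,\mathcal N_p(\varphi_n)$ on $\Omega_X$, a Borel--Cantelli argument guaranteeing that the bumps' Schwartz norms grow only polynomially, and the piecewise constancy of $X$ to pass from the jump values to all $t$. The only (harmless) difference is that you scale the bump by the actual gap $T_{n+1}$ and lower-bound $T_n\s n^{-2}$ eventually, whereas the paper uses a bump of width $S_n^{-2}$ and applies Borel--Cantelli to the event that no jump occurs in $\left]S_n,S_n+S_n^{-2}\right[$.
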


\begin{rem}
 We point out that this result relies on more than the piecewise constancy of a compound Poisson process. Indeed, there exists \emph{càdlàg} piecewise constant functions in $\tem$ which are not slowly growing. For example consider the function $f$ that is equal to zero except on intervals of the form $[n, n+2^{-n}[$ where it is constant equal to $2^{\frac n 2}$ for all $n\in \N$. Then $f\in L^1(\R) \subset \tem$, but $f$ is clearly not slowly growing.
\end{rem}

\begin{proof}[Proof of Proposition \ref{growth}]
The main idea is the following. Since $X$ is constant on the interval $[S_n , S_{n+1}[$ and the jump times are rarely close together, we can build a sequence of random test functions $\varphi_n$ supported just to the right of $S_n$, such that $\scal X {\varphi_n}= X_{S_n}$ for large enough $n$. The control of $\left | \scal X {\varphi_n} \right |$ by a norm $\m N_p(\varphi_n)$ leads to a bound on $X_{S_n}$, and then on $X_t$ since $X$ is piecewise constant. 

For $n\s 1$, the jump time $S_n$ has Gamma distribution with parameters $n$ and $\lambda$. For $k\s 1$ to be chosen later, and $\varphi \in \mathcal D(\R)$ with support in $[0, 1]$, $\varphi \s 0$ and $\int_\R \varphi=1$, we consider the sequence $\varphi_n$ defined by 
\begin{equation}\label{phin}
 \varphi_n(t)= S_n^k\varphi\lp \lp t-S_n\rp S_n^k \rp \, .
\end{equation}
Then
\begin{equation}\label{support}
 \text{supp} \,(\varphi_n) \subset  \left [S_n, S_n+\frac{1}{S_n^k} \right] \, ,
\end{equation}
 and $\int_\R \varphi_n=1$. Furthermore, for any nonnegative integer $p$ and $\alpha, \beta \I p$,
 \begin{align*}
 \sup_{x\in \R} \left |x^\alpha  \varphi^{(\beta)}_n(x) \right | &= \sup_{x\in \left [S_n, S_n+ \frac 1 {S_n^k} \right ]} \left |x^\alpha \varphi^{(\beta)}_n(x) \right | 
 \I \lp S_n+ \frac 1 {S_n^k} \rp^\alpha S_n^{k(\beta+1)} \sup_{x\in \R} \left | \varphi^{(\beta)}(x) \right | \, ,
\end{align*}
hence,
 \begin{align}\label{np2}
\mathcal N_p\lp \varphi_n \rp \mathds 1_{S_n\s 1} \I  C \mathcal N_p(\varphi) S_n^{(p+1)k+p} \mathds 1_{S_n\s 1} \, ,
\end{align}
where $C\in \R$ is deterministic, nonnegative, and depends only on $p$. We define the events
\begin{equation}\label{setA}
   A_{n,k}=\left\{ X \ \text{does not jump in the interval} \ \left ] S_n, S_n+\frac{1}{S_n^k} \right [ \, \right\} \, .
\end{equation}
Using the fact that $T_{n+1}$ has exponential distribution with parameter $\lambda$ and that $T_{n+1}$ and $S_n$ are independent, we have
\begin{align*}
 \PR(A_{n,k}^c) &=\PR\left \{ N_{S_n+\frac{1}{S_n^k}}-N_{S_n} \s 1 \right \} 
 =\PR\left \{ T_{n+1}< \frac{1}{S_n^k} \right \}
 =\E\lp1-e^{-\frac{\lambda}{S_n^k}}\rp  \I \E\lp \frac {\lambda }{S_n^k} \rp \, .
\end{align*}
The Laplace transform of $S_n$ is $ \E\lp e^{-tS_n}\rp= \lambda^n(t+\lambda)^{-n}$, for $t\s 0$. For $n\s 3$, integrating twice from $t$ to $+\infty$, we obtain
\begin{equation}\label{gammaint}
 \E\lp \frac 1 {S_n^2} \rp= \frac{\lambda^2}{(n-1)(n-2)} \, , \qquad n\s 3 \, .
\end{equation}
 We deduce that $\sum_n \E\lp   S_n^{-2} \rp <+\infty $.
Taking $k=2$, we deduce that $\sum_n \PR\lp A_{n,2}^c \rp <+\infty$ and by the Borel-Cantelli Lemma,
$$\PR \lp \underset{n\to +\infty}{\lim\sup} \ A^c_{n,2}\rp=0 \, ,$$
and the set $A=\underset{n\to +\infty}{\lim\inf} A_{n,2}$ has probability one. Let $\omega \in A \cap \Omega_X$, and $N(\omega)$ be such that for all $n\s N(\omega), \ \omega\in A_{n,2}$. Then for $n\s N(\omega)$, because of \eqref{support} and \eqref{setA},
\begin{align}\label{eq2}
 \langle X, \varphi_n\rangle(\omega)= X_{S_n}(\omega)\mathds 1_{A_{n,2}}(\omega)+ \scal X {\varphi_n} (\omega) \mathds 1_{A_{n,2}^c}(\omega)=X_{S_n}(\omega) \, .
\end{align}
 Since $X(\omega)$ is a tempered distribution by definition of $\Omega_X$, there is $p(\omega)\in \N$ and $C(\omega)\in \R$ such that 
\begin{align}\label{estimation}
\left | \langle X, \varphi_n\rangle (\omega) \right| \mathds 1_{S_n(\omega) \s 1} &\I C(\omega) \mathcal N_{p(\omega)}\lp \varphi_n \rp \mathds 1_{S_n(\omega)\s 1} 
\I C'(\omega) S_n^{3p(\omega)+2}(\omega) \mathds 1_{S_n(\omega)\s 1}
\end{align}
 by \eqref{np2} with $k=2$. Because $S_n\to {+\infty}$ a.s., we can choose $N(\omega)$ such that $S_n(\omega) \s 1$ for all integers $n\s N(\omega)$ (replacing $A$ by another almost sure set). From \eqref{eq2} and \eqref{estimation}, we deduce that for all $\omega \in A\cap \Omega_X$, 
$$ \frac{\left | X_{S_n}(\omega) \right |}{S_n^{3p(\omega)+2}(\omega)} \I C'(\omega)<+\infty \, , \qquad \text{for all}\  n \s N(\omega)\, . $$
 Let $n\s N(\omega)$ and let $t\s S_n(\omega)$. There is an integer $j\s n$ such that $t\in \left [S_j(\omega), S_{j+1}(\omega)\right [\,$. Then
\begin{align*}
 |X_t (\omega)| =|X_{S_j} (\omega)| \I C'(\omega) S_j^{3p(\omega)+2}(\omega) \I C'(\omega) t^{3p(\omega)+2} \, .
\end{align*}
We deduce that
$$\underset{t\to +\infty}{\lim\sup} \, \frac{|X_t (\omega)|}{1+ t^{3p(\omega)+2}} \I C'(\omega)<+\infty$$
on the set $A \cap \Omega_X$. This completes the proof.
\end{proof}

The next proposition recalls properties of the long term behavior of a compound Poisson process. Similar results on the growth of L\'evy processes are available in \cite[Proposition 48.10]{sato}. We include a proof for convenience of the reader.
\begin{prop}\label{longterm}
  Let $X$ be the compound Poisson process with jump heights $(Y_i)_{i\s 1}$ defined at the beginning of this section. 
\begin{itemize}
\item[(i)] Suppose that there is a real number $p>0$ such that $\E(|Y_1|^p)<+\infty$. Then there is $\alpha > 0$ such that 
$$\underset{t\to +\infty}{\lim\sup} \frac {|X_t|}{1+t^\alpha} <+ \infty \qquad \text{a.s.}$$
 \item[(ii)] Suppose that $\E(|Y_1|^p)=+\infty$ for every $p>0$. Then for any $\alpha > 0$,
$$\underset{t\to +\infty}{\lim \sup} \frac {|X_t|}{1+t^\alpha} = +\infty \qquad \text{a.s.}$$ 
\end{itemize}
\end{prop}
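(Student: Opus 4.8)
The plan is to reduce both statements to the growth of the random walk $Z_n=\sum_{i=1}^n Y_i$. Since $X$ is piecewise constant with $X_t=Z_n$ for $t\in[S_n,S_{n+1})$, and since the strong law of large numbers applied to the i.i.d.\ exponential inter-jump times $T_i$ gives $S_n/n\to 1/\lambda$ a.s.\ (equivalently $N_t/t\to\lambda$ a.s.), the growth rate of $t\mapsto X_t$ is comparable, up to constants, to that of $n\mapsto Z_n$ with $t$ playing the role of $n$.

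\emph{Part (i).} Since $\E(|Y_1|^{p'})\I 1+\E(|Y_1|^p)<+\infty$ for every $p'\in(0,p]$, I may assume $0<p<2$ and invoke the Marcinkiewicz--Zygmund strong law of large numbers \cite{kallenberg}: $n^{-1/p}(Z_n-cn)\to 0$ a.s., where $c=\E(Y_1)$ if $p\s 1$ and $c=0$ otherwise. Either way there is an a.s.\ finite random variable $C$ and $\beta:=\max(1,1/p)$ with $|Z_n|\I C(1+n^\beta)$ for all $n$. Combining with $N_t\I C'(1+t)$ for large $t$ a.s., I obtain $|X_t|=|Z_{N_t}|\I C''(1+t^\beta)$ for large $t$ a.s., which is the assertion with $\alpha=\beta$. (One could also bypass Marcinkiewicz--Zygmund and use a Borel--Cantelli argument to get the cruder but sufficient bound $|Z_n|=O(n^{1+1/p})$.)

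\emph{Part (ii).} Fix $\alpha>0$. By the reduction above it suffices to show $\limsup_n |Z_n|/n^\alpha=+\infty$ a.s.: along a subsequence $(n_j)$ achieving this limsup one has $|X_{S_{n_j}}|=|Z_{n_j}|$ while eventually $S_{n_j}\I Cn_j$, hence $|X_{S_{n_j}}|/(1+S_{n_j}^\alpha)\to+\infty$ with $S_{n_j}\to+\infty$. From $|Y_n|\I|Z_n|+|Z_{n-1}|$ we get $\limsup_n |Y_n|/n^\alpha\I 2\limsup_n |Z_n|/n^\alpha$, so it is enough to prove $\limsup_n |Y_n|/n^\alpha=+\infty$ a.s. For each integer $K\s 1$ we have $\PR(|Y_n|>Kn^\alpha)=\PR(|Y_1|^{1/\alpha}/K^{1/\alpha}>n)$, and since $\E(|Y_1|^{1/\alpha})=+\infty$ by hypothesis (and $\E(V)=+\infty$ implies $\sum_n\PR(V>n)=+\infty$ for $V\s 0$), we get $\sum_n\PR(|Y_n|>Kn^\alpha)=+\infty$; as the $Y_n$ are independent, the second Borel--Cantelli lemma gives $\PR(|Y_n|>Kn^\alpha \text{ i.o.})=1$. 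Intersecting these events over $K\in\N$ yields $\limsup_n |Y_n|/n^\alpha=+\infty$ a.s.

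\emph{Main obstacle.} I expect the delicate part to be (ii): one must pass from divergence of the moments of $Y_1$ to unbounded growth of the partial sums $Z_n$, even though cancellations among the $Y_i$ are a priori possible, and then transfer this to continuous time, where on each constancy interval $[S_n,S_{n+1})$ the numerator $|X_t|$ is frozen while the denominator $1+t^\alpha$ keeps growing — sampling at the left endpoints $t=S_{n_j}$ is what keeps the ratio large. The inequality $|Y_n|\I|Z_n|+|Z_{n-1}|$ circumvents the cancellation issue, and intersecting the Borel--Cantelli events over all $K$ upgrades ``large infinitely often'' to ``$\limsup=+\infty$''.
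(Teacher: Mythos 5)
Your proof is correct. Part (i) is essentially the paper's argument: both invoke the Kolmogorov--Marcinkiewicz--Zygmund law to get $|Z_n|=O(n^{\max(1,1/p)})$ a.s.\ and then transfer to continuous time via the law of large numbers for the jump times (you via $N_t=O(t)$, the paper via $S_k\sim k/\lambda$); the difference there is cosmetic. Part (ii), however, takes a genuinely different route. The paper quotes the converse half of the Marcinkiewicz--Zygmund theorem to conclude that $n^{-1/p}Z_n$ fails to converge, upgrades this to an almost sure statement by Kolmogorov's zero--one law, and thereby obtains only $\limsup_n |Z_n|/n^{1/p_1}>0$; it then compensates for this weak conclusion by working at the shifted exponent $1/p_1=\alpha+1>\alpha$, so that the time-change factor $(1+k^{1/p_1})/(1+S_{k+1}^\alpha)$ itself blows up. You instead prove the stronger statement $\limsup_n|Z_n|/n^\alpha=+\infty$ directly at the relevant exponent, via the second Borel--Cantelli lemma applied to the independent events $\{|Y_n|>Kn^\alpha\}$ (using $\E(|Y_1|^{1/\alpha})=+\infty$ and intersecting over $K$) together with the cancellation-killing inequality $|Y_n|\leq|Z_n|+|Z_{n-1}|$. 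This is self-contained --- it in effect reproves the converse implication of Marcinkiewicz--Zygmund rather than citing it --- and it avoids both the zero--one law and the exponent shift; the transfer to continuous time by sampling at the left endpoints $S_{n_j}$, where the denominator is still of order $n_j^\alpha$, is then immediate. The paper's version is shorter on the page because it outsources the hard step to the cited theorem; yours makes the underlying mechanism (a single heavy jump beats every polynomial scale infinitely often) explicit. Both arguments are valid.
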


\begin{proof}
We use the notations introduced at the beginning of Section \ref{sectionpoisson}. To prove \textit{(i)}, let $p>0$ be such that $\E(|Y_1|^p)<+\infty$. If $p<1$, then by the law of large numbers of Kolmogorov, Marcinkiewicz and Zygmund (see \cite[Theorem 4.23]{kallenberg}), we have $n^{-\alpha}Z_n\to  0$ a.s., with $\alpha= p^{-1}$, so $\sup_{n\s 1} n^{-\alpha}|Z_n| < +\infty$ a.s.
 If $p\s 1$, then by the strong law of large numbers, $\sup_{n\s 1} n^{-1} Z_n < + \infty$. Finally, for $p>0$, we combine both cases by setting $\alpha=\max \lp p^{-1} , 1 \rp$, so that
\begin{align}\label{sup}
 \sup_{n\s 1} \frac {|Z_n|}{1+n^\alpha} <+ \infty \qquad \text{a.s.}
\end{align}

   Let $t\in \R_+$. There is an integer $k$ such that $t\in [S_k, S_{k+1}[$, so that $X_t=X_{S_k}=Z_k$ and
\begin{align}\label{supi1}
 \frac{|X_t|}{1+t^\alpha} \I \frac{|Z_k|}{1+S_k^\alpha} = \frac{|Z_k|}{1+k^\alpha}\frac{1+k^\alpha}{1+S_k^\alpha}\, .
\end{align}
 Since $S_k$ is the sum of $k$ i.i.d.~exponential random variables with parameter $\lambda>0$, the law of large numbers tells us that $k^{-1}S_k\to \frac 1 \lambda$ a.s. We deduce from \eqref{supi1} and \eqref{sup} that  
\begin{align}\label{limsup}
 \underset{t\to +\infty}{\lim\sup} \, \frac {|X_t|}{1+t^\alpha} < +\infty \qquad \text{a.s.},
\end{align}
and \textit{(i)} is proved.

To prove \textit{(ii)}, suppose that for any $p>0$, we have $\E(|Y_1|^p)=+\infty$. Then according to the theorem in \cite{kallenberg} mentioned above, for any $p \in \, ]0,1[, \ n^{-1/p} Z_n$ does not converge on a set of positive probability. Since $\lp Z_n\rp_{n\s 1}$ is a sum of i.i.d.~random variables, the existence of a limit at infinity is a tail event. From Kolmogorov's zero-one law, we deduce that for any $p \in \, ]0,1[ , \  n^{-1/p} Z_n$ does not converge almost surely, and, in particular,
\begin{align}\label{limsup2}
  \limsup_{n\to +\infty} \frac{|Z_n|}{n^{1/p}}>0 \qquad \text{a.s.} \, .
\end{align}
 Fix $\alpha>0$ and let $p_1=\frac 1 {\alpha+1} \in \, ]0,1[ $. By \eqref{limsup2}, 
\begin{align}\label{supi2}
\limsup_{n\to +\infty} \frac{|Z_n|}{n^{1/{p_1}}}>0 \qquad \text{a.s.}
\end{align}
 For $t\in \R_+$, there is an integer $k$ such that $t\in [S_k, S_{k+1}[$, so $X_t=X_{S_k}=Z_k$ and
\begin{align}\label{supi3}
 \frac{|X_t|}{1+t^\alpha} \s \frac{|Z_k|}{1+S_{k+1}^\alpha} = \frac{|Z_k|}{1+k^{1/{p_1}}}\frac{1+k^{1/{p_1}}}{1+S_{k+1}^\alpha} \, .
\end{align}
By the strong law of large numbers and the fact that ${p_1}^{-1}=\alpha+1>\alpha$, we have that $\lim_{k\to \infty} \frac{1+k^{1/{p_1}}}{1+S_{k+1}^\alpha}=+\infty$. Taking the $\limsup$ on both sides of \eqref{supi3} (in fact taking the limit along some subsequence), we deduce from \eqref{supi2} that
 $$ \underset{t\to +\infty}{\lim\sup} \, \frac {|X_t|}{1+t^\alpha} =+ \infty  \qquad \text{a.s.}$$
\end{proof}

\begin{cor}\label{levyprocesstempered}
 Let $X$ be a L\'evy process with characteristic triplet $(\gamma, \sigma ,\nu)$ and $\Omega_X$ be the set defined as in \eqref{omega_set}. 
 \begin{itemize}
 \item[(i)] If there exists $\eta>0$ such that $\E\lp |X_1|^\eta\rp<+\infty$, then $\PR(\Omega_X)=1$.
 \item[(ii)] If for all $\eta>0, \ \E\lp |X_1|^\eta\rp=+\infty$, then $\PR(\Omega_X)=0$.
 \end{itemize}
\end{cor}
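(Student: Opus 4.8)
The plan is to reduce Corollary \ref{levyprocesstempered} to the results already established for the compound Poisson term via the L\'evy--It\^o decomposition. Write $X_t = \gamma t + \sigma W_t + X^P_t + X^M_t$ and set $Y_t = \gamma t + \sigma W_t + X^M_t$, so that $X = Y + X^P$. By Corollary \ref{martingalecor}, $Y$ is slowly growing almost surely and in particular $\PR(\Omega_Y) = 1$. The key observation is that $X$ and $X^P$ differ by the slowly growing process $Y$, so up to an almost sure set the growth behaviors of $X$ and $X^P$ are comparable: $|X_t| \leq |X^P_t| + |Y_t|$ and $|X^P_t| \leq |X_t| + |Y_t|$, with $|Y_t| \leq C(\omega)(1+|t|)$ eventually.

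For part \textit{(i)}, suppose $\E(|X_1|^\eta) < +\infty$ for some $\eta > 0$. I would first check that this moment condition transfers to the jump heights of $X^P$: the L\'evy measure of $X^P$ is $\nu$ restricted to $\{|x| > 1\}$, and finiteness of $\E(|X_1|^\eta)$ is equivalent (by the standard criterion relating moments of a L\'evy process to moments of its L\'evy measure, e.g. \cite[Theorem 25.3]{sato}) to $\int_{|x|>1} |x|^\eta\, \nu(\dd x) < +\infty$, which says precisely that the i.i.d.\ jump heights $(Y_i)$ of the compound Poisson process $X^P$ satisfy $\E(|Y_1|^\eta) < +\infty$. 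Then Proposition \ref{longterm}\textit{(i)} gives an $\alpha > 0$ with $\limsup_{t\to\infty} |X^P_t|/(1+t^\alpha) < +\infty$ a.s., so $X^P$ is slowly growing a.s., hence $X = Y + X^P$ is slowly growing a.s., and by Remark \ref{rkpolgrowth}, $\PR(\Omega_X) = 1$.

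For part \textit{(ii)}, suppose $\E(|X_1|^\eta) = +\infty$ for every $\eta > 0$. By the same moment criterion, $\int_{|x|>1} |x|^\eta\, \nu(\dd x) = +\infty$ for all $\eta > 0$, i.e.\ the jump heights of $X^P$ satisfy $\E(|Y_1|^\eta) = +\infty$ for all $\eta > 0$ (one must note that the compound Poisson process is nondegenerate here: if $\nu$ restricted to $\{|x|>1\}$ were zero then all moments would be finite, contradicting the hypothesis). Proposition \ref{longterm}\textit{(ii)} then gives, for every $\alpha > 0$, $\limsup_{t\to\infty}|X^P_t|/(1+t^\alpha) = +\infty$ a.s. Since $|X^P_t| \leq |X_t| + C(\omega)(1+t)$ eventually, taking $\alpha > 1$ forces $\limsup_{t\to\infty}|X_t|/(1+t^\alpha) = +\infty$ a.s. for every $\alpha > 1$, hence also for every $\alpha > 0$; thus $X$ is not slowly growing on an almost sure set. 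To conclude $\PR(\Omega_X) = 0$ I would invoke Proposition \ref{growth} applied to $X^P$: on the probability-one set $A$ there, $\omega \in \Omega_{X^P} \cap A$ implies $X^P(\omega)$ is slowly growing; but we have just shown $X^P$ is a.s.\ not slowly growing, so $\PR(\Omega_{X^P}) = 0$, and then $\Omega_X \subseteq \Omega_{X^P} \cup \Omega_Y^c$ up to null sets (since $X^P = X - Y$ and $Y \in \tem$ a.s.) forces $\PR(\Omega_X) = 0$.

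The main obstacle is the careful bookkeeping in part \textit{(ii)}: one must argue that membership of $X$ in $\tem$ would, together with $Y \in \tem$ a.s., force $X^P = X - Y \in \tem$ a.s., and then apply Proposition \ref{growth} to $X^P$ to deduce slow growth of $X^P$ — contradicting Proposition \ref{longterm}\textit{(ii)}. This requires knowing that $\tem$ is a vector space (so differences of tempered distributions are tempered) and that the pointwise identity $X^P_t = X_t - Y_t$ for the c\`adl\`ag modifications induces the corresponding identity of the associated distributions, which is immediate from the integral formula $\scal{L}{\varphi} = \int_{\R_+} L_t\varphi(t)\dd t$. The other point needing a line of justification is the moment transfer between $X_1$ and the jump heights $(Y_i)$; this is classical but should be cited precisely.
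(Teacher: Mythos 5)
Your proof is correct and follows essentially the same route as the paper: the L\'evy--It\^o decomposition, Corollary \ref{martingalecor} for the integrable part $Y$, the transfer of the moment condition to the jump heights via \cite[Theorem 25.3]{sato}, and Propositions \ref{longterm} and \ref{growth} for the compound Poisson part. If anything, your part \textit{(ii)} is slightly more explicit than the paper's, which applies Propositions \ref{growth} and \ref{longterm} to $X$ directly and leaves the reduction $\Omega_X \subseteq \Omega_{X^P}\cup \Omega_Y^c$ (and the nondegeneracy of $X^P$) implicit.
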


\begin{rem}\label{equiv}
If $\E\lp |X_1|^\eta \rp <+\infty$ for some $\eta >0$, then we say that $X$ has a positive absolute moment (\textbf{PAM}). Recall that for $\eta >0$, $\E\lp |X_1|^\eta \rp <+\infty$ if and only if $\int_{|x|>1}|x|^\eta \nu( \! \dd x)<+\infty$ (see \cite[Theorem 25.3]{sato}). Hence the condition \pam \ can be equivalently expressed in terms of the L\'evy measure $\nu$.
\end{rem}

\begin{proof}[Proof of Corollary \ref{levyprocesstempered}]
 To prove \textit{(i)}, let $X_t=\gamma t + \sigma W_t+ X^M_t+ X^P_t$ be the L\'evy-Itô decomposition of $X$. Since $\E\lp |X_1|^\eta\rp<+\infty$, we have $\int_{|x|>1}|x|^\eta \nu( \! \dd x)<+\infty$ (see Remark \ref{equiv}). The jump heights $(Y_i)_{i\s 1}$ of the compound Poisson part $X^P$ are i.i.d., with law $\lambda^{-1}\mathds 1_{|x|>1} \nu(\! \dd x)$ (where $\lambda$ is a normalizing constant), therefore $\E\lp |Y_1|^\eta\rp<+\infty$. Then we can use Proposition \ref{martingale} for the continuous and small jumps terms of the L\'evy-Itô decomposition of $X$, and Proposition \ref{longterm} for the large jumps term, to deduce that $X$ has polynomial growth at infinity. By the \textit{càdlàg} property of $X$ and Remark \ref{rkpolgrowth} we get the result.
 
 To prove \textit{(ii)}, since 
 $$\{\omega : t\mapsto X_t(\omega) \ \text{is slowly growing} \} \cap \{ \omega : \forall \alpha >0\, , \ \limsup_{t\to +\infty} (1+t^\alpha)^{-1} |X_t|=+\infty \}=\emptyset \, ,$$
 and under \textit{(ii)} the second set has probability one by Proposition \ref{longterm}, we deduce from Proposition \ref{growth} that $\PR\lp \Omega_{ X}\cap A \rp =0$, where $A$ is the almost-sure set defined in Proposition \ref{growth}. Therefore, $\PR \lp \Omega_X \rp=0$.
\end{proof}

\subsection{L\'evy white noise: the general case}\label{sec2.3}

\indent\indent
Let $X$ be a L\'evy process. We can define the derivative of $X$ in the sense of distributions as follows.
\begin{df}\label{lnoiseone}
Let $X$ be a L\'evy process with characteristic triplet $(\gamma, \sigma, \nu)$. The L\'evy white noise $\dot X$ is the derivative of $ X$ in $\m D'(\R)$: for $\omega \in \Omega$ and $\varphi \in \m D(\R)$,
$$\scal{\dot X (\omega)}{\varphi}:=-\scal{ X(\omega)}{\varphi '}:=-\int_{\R_+} X_t(\omega) \varphi'(t) \dd t\, .$$
\end{df}\label{defwn}

Notice that the law of the L\'evy white noise $\dot X$ is entirely characterized by the triplet $(\gamma, \sigma, \nu)$ (given that we use the truncation function $\mathds 1_{|x|\I 1}$ in the L\'evy-Itô decomposition). 

\begin{rem}
 Our definition of L\'evy white noise is equivalent to other definitions such as the one found in \cite[Definition 5.4.1]{spdeholden} and in \cite[Definition 3]{fageot}. We postpone the discussion of this issue to the multiparameter case: see Proposition \ref{lnoise}.
\end{rem}

We now turn to the question of whether or not a L\'evy white noise is a tempered distribution. Similar to \eqref{omega_set}, for any L\'evy noise $\dot X$, we define the set  
\begin{align}\label{omega_set_dot}
 \Omega_{\dot X}= \left \{ \omega \in \Omega : \dot X(\omega) \in \tem \right \} \, ,
\end{align}
 and we have the following characterization. 
\begin{theo}\label{tempered}
 Let $X$ be a L\'evy process with characteristic triplet $(\gamma, \sigma, \nu)$, and $\dot X$ the associated L\'evy white noise. Then the following holds for the set $\Omega_{\dot X}$ defined in \eqref{omega_set_dot}:
\begin{itemize}
 \item[(i)] If there exists $\eta>0$ such that $\E\lp |X_1|^\eta\rp<+\infty$, then $\PR\lp \Omega_{\dot X} \rp=1$.
 \item[(ii)] If $\E\lp |X_1|^\eta\rp=+\infty$ for all $\eta>0$, then $\PR\lp \Omega_{\dot X} \rp=0$.
\end{itemize}
\end{theo}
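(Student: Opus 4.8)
The plan is to deduce Theorem \ref{tempered} from Corollary \ref{levyprocesstempered} by relating $\Omega_{\dot X}$ to $\Omega_X$. The key observation is that the map $\varphi \mapsto -\varphi'$ is a continuous linear operation on $\Sc$ (indeed $\m N_p(\varphi') \I \m N_{p+1}(\varphi)$), and primitives interact well with the Schwartz space. So the first step is to show that $X$ (the càdlàg process, viewed as a locally integrable function) defines a tempered distribution if and only if $\dot X$ does, on a common almost-sure set. In other words, up to a null set, $\Omega_{\dot X} = \Omega_X$, after which parts \textit{(i)} and \textit{(ii)} follow immediately from Corollary \ref{levyprocesstempered} \textit{(i)} and \textit{(ii)} respectively.

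For part \textit{(i)}: assume $\E(|X_1|^\eta) < +\infty$ for some $\eta > 0$. By Corollary \ref{levyprocesstempered} \textit{(i)}, $\PR(\Omega_X) = 1$, so $X(\omega) \in \tem$ almost surely. For such $\omega$, since differentiation is a continuous operation on $\tem$ (the transpose of the continuous map $\varphi \mapsto -\varphi'$ on $\Sc$), the distributional derivative $\dot X(\omega)$ is again in $\tem$. One should check that this distributional derivative coincides with $\dot X(\omega)$ as defined in Definition \ref{lnoiseone}, which is immediate: for $\varphi \in \m D(\R)$ we have $\scal{\dot X(\omega)}{\varphi} = -\scal{X(\omega)}{\varphi'}$ by definition, and this is exactly the action on $\Sc$ restricted to $\m D(\R)$; since $\m D(\R)$ is dense in $\Sc$ and both sides are continuous linear functionals on $\Sc$, the extension is unique. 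Hence $\PR(\Omega_{\dot X}) = 1$.

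For part \textit{(ii)}: assume $\E(|X_1|^\eta) = +\infty$ for all $\eta > 0$. We must show that $\dot X(\omega) \in \tem$ can only happen on a null set. The natural approach is the converse direction: if $\dot X(\omega) \in \tem$, then by integrating we recover that $X(\omega) \in \tem$, so $\Omega_{\dot X} \subset \Omega_X$ (up to a null set), and then $\PR(\Omega_{\dot X}) \I \PR(\Omega_X) = 0$ by Corollary \ref{levyprocesstempered} \textit{(ii)}. Concretely, given $\varphi \in \Sc$, one writes $\scal{X(\omega)}{\varphi}$ in terms of $\dot X(\omega)$ acting on a primitive: choose a fixed $\chi \in \m D(\R)$ with $\int_\R \chi = 1$, and for $\varphi \in \Sc$ set $\Psi_\varphi(t) = -\int_t^{+\infty}\big(\varphi(s) - \big(\int_\R \varphi\big)\chi(s)\big)\dd s$. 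Then $\Psi_\varphi \in \Sc$ (the integrand has integral zero and decays rapidly, so $\Psi_\varphi$ decays rapidly; its derivatives are $\varphi - (\int\varphi)\chi$ and their derivatives, hence Schwartz), $\Psi_\varphi' = \varphi - (\int_\R\varphi)\chi$, and the map $\varphi \mapsto \Psi_\varphi$ is continuous on $\Sc$. Consequently $\scal{X(\omega)}{\varphi} = \scal{X(\omega)}{\Psi_\varphi'} + \big(\int_\R\varphi\big)\scal{X(\omega)}{\chi} = -\scal{\dot X(\omega)}{\Psi_\varphi} + \big(\int_\R\varphi\big)\scal{X(\omega)}{\chi}$. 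When $\dot X(\omega) \in \tem$, the right-hand side is a continuous linear functional of $\varphi$ on $\Sc$ (the term $\scal{X(\omega)}{\chi}$ is just a finite constant since $X(\omega)$ is locally integrable), so $X(\omega) \in \tem$. This gives $\Omega_{\dot X} \subset \Omega_X$ and finishes the proof.

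The main obstacle — really the only delicate point — is making the primitive argument rigorous: verifying that $\Psi_\varphi$ genuinely belongs to $\Sc$ (rapid decay at $+\infty$ is clear from $\int_{\R}(\varphi - (\int\varphi)\chi) = 0$ together with rapid decay of the integrand; rapid decay at $-\infty$ requires that $\chi$ be compactly supported so that $\Psi_\varphi$ is eventually constant, and that constant is zero by the same vanishing-integral argument applied from $-\infty$; one must also bound $\m N_p(\Psi_\varphi)$ by some $\m N_q(\varphi)$), and that the identity $\scal{X(\omega)}{\Psi_\varphi'} = -\scal{\dot X(\omega)}{\Psi_\varphi}$ — originally stated only for test functions in $\m D(\R)$ — extends to $\Psi_\varphi \in \Sc$ on the event $\Omega_{\dot X}$ by density of $\m D(\R)$ in $\Sc$. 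Once these routine functional-analytic verifications are in place, the probabilistic content is entirely carried by Corollary \ref{levyprocesstempered}.
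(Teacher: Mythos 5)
Your proof is correct, and part \textit{(i)} is identical to the paper's. For part \textit{(ii)}, however, you take a different route from the paper's main proof: the paper invokes the structure theorem for tempered distributions (every element of $\tem$ is a finite-order derivative of a slowly growing continuous function) to produce \emph{some} primitive of $\dot X(\omega)$ lying in $\tem$, and then uses only the soft fact that two primitives in $\m D'(\R)$ differ by a constant; you instead build the primitive explicitly through the antiderivative operator $\varphi\mapsto \Psi_\varphi$. This is precisely the alternate proof the paper records in Remark \ref{functional} (your $\Psi_\varphi$ is the operator $I$ there, since $-\int_t^{+\infty}f=\int_{-\infty}^t f$ when $\int_\R f=0$). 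What your route buys is an explicit, quantitative continuity bound $\m N_p(\Psi_\varphi)\I C_p\m N_{p+2}(\varphi)$ in place of an appeal to the structure theorem; what the paper's main route buys is that it never has to pair $X(\omega)$ with anything outside $\m D(\R)$. One point in your write-up should be phrased more carefully: the identity $\scal{X(\omega)}{\varphi}=-\scal{\dot X(\omega)}{\Psi_\varphi}+\lp\int_\R\varphi\rp\scal{X(\omega)}{\chi}$ should be established only for $\varphi\in\m D(\R)$, where $\Psi_\varphi\in\m D(\R)$ as well (since $\chi$ is compactly supported and $\int_\R(\varphi-(\int\varphi)\chi)=0$), so that the second equality is literally Definition \ref{lnoiseone}; one then concludes because the right-hand side is a continuous linear functional on all of $\Sc$ that agrees with the distribution $X(\omega)$ on $\m D(\R)$, whence $X(\omega)\in\tem$. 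Your stated plan of extending $\scal{X(\omega)}{\Psi_\varphi'}$ to non-compactly-supported $\Psi_\varphi$ ``by density'' would be circular: the convergence of $\int_{\R_+}X_t\psi(t)\dd t$ for general $\psi\in\Sc$ is exactly what is not yet known on $\Omega_{\dot X}$, since $X(\omega)$ has no a priori growth control there. With that reordering, the argument is complete and the probabilistic content is, as you say, entirely carried by Corollary \ref{levyprocesstempered}.
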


\begin{proof}
  Suppose that $X$ has a \pam \ of order $\eta$. By Corollary \ref{levyprocesstempered} \textit{(i)}, $P(\Omega_X)=1$.
Differentiation maps $\tem$ to itself, hence on $\Omega_X$, the L\'evy noise $\dot X$ is a tempered distribution: $\Omega_{\dot X} \supset \Omega_X$.
We deduce that $\Omega_{\dot X}$ has probability one.

Suppose that for all $\eta >0$, $\E\lp |X_1|^\eta\rp=+\infty$. Let $X^P_t=\sum_{i=1}^{N_t} Y_i$ be the compound Poisson part of the decomposition of $X$, then by Remark \ref{equiv}, and from the fact that $(Y_i)_{i\s 1}$ is a sequence of i.i.d.~random variables with law $\mathds 1_{|x|>1} \nu(\dd x)$, we deduce that $\E\lp |Y_1|^\eta\rp=+\infty$ for all $\eta>0$. By Corollary \ref{martingalecor}, $\Omega_X=\Omega_{X^P}$, and by Corollary \ref{levyprocesstempered} \textit{(ii)}, $\PR(\Omega_{X})=\PR(\Omega_{X^P})=0$. We now show that $\Omega_{\dot X} \subset \Omega_X$. Let $\omega\in \Omega_{\dot X}$. Two solutions in $\m D'(\R)$ of the equation $u'=\dot X(\omega)$ differ by a constant (see \cite[Théorème I, chapter II, \S 4 p.51]{distributions_schwartz}) and $X(\omega)$ is obviously one of them. Therefore, if there is a solution to this equation in $\tem$, then $\omega\in \Omega_X$. To show that such a solution $u$ exists, recall that a distribution is an element of $\tem$ if and only if it is the derivative of some order of a slowly growing continuous function (see \cite[Théorème VI, chapter VII, \S 4 p.239]{distributions_schwartz}): $\dot X(\omega)=g^{(n)}$ for some continuous slowly growing function $g$ and some integer $n$. If $n\s1$, then $u=g^{(n-1)}$ is a solution in $\tem$ of $u'=\dot X(\omega)$. If $n=0$, then $u(t)=\int_0^t g(s) \dd s$ is a slowly growing solution, therefore $u\in \tem$.
\end{proof}

\begin{cor}\label{measurable}
 Let $X$ be a L\'evy process with characteristic triplet $(\gamma, \sigma, \nu)$, let $\dot X$ be the associated L\'evy noise and suppose it has a \textbf{PAM}. Then there is a random tempered distribution $S$, that is a measurable map from $\lp \Omega, \mathcal F\rp$ to $\lp \mathcal S'(\R), \mathcal B\rp$, where $\mathcal B$ is the Borel $\sigma$-field for the weak-$*$ topology, such that almost surely, for all $\varphi \in \Sc$,
 $$\scal{S}{\varphi}= \scal{\dot X}{\varphi}=-\int_{\R_+} X_t \varphi'(t) \dd t\, .$$
  In addition, the maps $C: \omega \mapsto C(\omega)$ and $p: \omega \mapsto p(\omega)$ such that for all $\varphi \in \Sc$,
 $$ \left |\scal{S}{\varphi}\right | \I C \mathcal N_p(\varphi) \qquad \text{a.s.}$$
 can be chosen to be $\mathcal F$-measurable.
\end{cor}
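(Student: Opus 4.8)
The plan is to construct the random tempered distribution $S$ concretely as a function of the càdlàg process $X$, then verify measurability by reducing to countably many operations. First I would fix a version of the process $(X_t)$ that is càdlàg for every $\omega$ (not just almost surely), using the standard modification from \cite[Theorem 11.5]{sato}; then for each $\omega$ the map $\varphi \mapsto -\int_{\R_+} X_t(\omega)\varphi'(t)\dd t$ is a well-defined linear functional on $\Sc$, and by Theorem \ref{tempered}(i) together with the \textbf{PAM} hypothesis it is continuous for $\omega$ in a set $\Omega_{\dot X}$ of probability one. On $\Omega_{\dot X}$ set $S(\omega) := \dot X(\omega) \in \tem$, and on the complement set $S(\omega) := 0$; this defines a map $S: \Omega \to \mathcal S'(\R)$. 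The identity $\scal{S}{\varphi} = \scal{\dot X}{\varphi} = -\int_{\R_+} X_t\varphi'(t)\dd t$ then holds almost surely for every fixed $\varphi \in \Sc$.

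For measurability with respect to the Borel $\sigma$-field $\mathcal B$ of the weak-$*$ topology, I would use the standard fact that on $\tem$ the weak-$*$ Borel $\sigma$-field coincides with the cylinder $\sigma$-field generated by the evaluation maps $u \mapsto \scal{u}{\varphi}$, $\varphi \in \Sc$ — indeed $\Sc$ is separable, so the weak-$*$ topology on any bounded-order piece is metrizable and the two $\sigma$-fields agree (this is the point the introduction flags as ``in fact equal''). Thus it suffices to check that $\omega \mapsto \scal{S(\omega)}{\varphi}$ is $\mathcal F$-measurable for each $\varphi \in \Sc$. On $\Omega_{\dot X}$ this equals $-\int_{\R_+} X_t(\omega)\varphi'(t)\dd t$, which is a measurable functional of the càdlàg path $(X_t)$ — one approximates the integral by Riemann sums $-\sum_j X_{t_j}(\omega)\varphi'(t_j)(t_{j+1}-t_j)$ over rational partitions, each term measurable, and passes to the limit using right-continuity and the integrability furnished by the \textbf{PAM} (dominated convergence, since $|X_t|$ grows at most polynomially a.s.\ and $\varphi'$ decays rapidly). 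Measurability of $\Omega_{\dot X}$ itself follows because it is the event that this family of functionals, indexed by a countable dense subset of $\Sc$, satisfies the continuity estimate $|\scal{S}{\varphi}| \le C \mathcal N_p(\varphi)$ for some integer $p$ and rational $C$; concretely $\Omega_{\dot X} = \bigcup_{p\in\N}\bigcup_{C\in\Q_+} \{\omega : |\scal{\dot X(\omega)}{\varphi}| \le C\mathcal N_p(\varphi) \text{ for all } \varphi \text{ in a countable dense set}\}$, a countable union of measurable sets.

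For the last assertion, about choosing $C$ and $p$ measurably, I would define $p(\omega)$ to be the smallest integer $p$ for which $\sup_{\varphi \ne 0} |\scal{S(\omega)}{\varphi}| / \mathcal N_p(\varphi) < \infty$ (this supremum, being a countable supremum over a dense set, is a measurable function of $\omega$ valued in $[0,+\infty]$; finiteness for some $p$ holds on $\Omega_{\dot X}$, and one sets $p(\omega) = 0$ off that set), and then set $C(\omega) := \sup_{\varphi \ne 0} |\scal{S(\omega)}{\varphi}| / \mathcal N_{p(\omega)}(\varphi)$, again a countable supremum hence measurable, with the convention $C = 0$ when $S = 0$. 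The estimate $|\scal{S}{\varphi}| \le C\,\mathcal N_{p}(\varphi)$ for all $\varphi \in \Sc$ then holds by continuity of both sides in $\varphi$ and density. The main obstacle I anticipate is the careful bookkeeping that the passage from the countable dense set to all of $\Sc$ is legitimate at every stage — both for defining $\Omega_{\dot X}$ and $p(\omega), C(\omega)$, and for the coincidence of the two $\sigma$-fields on $\tem$; everything else is an application of Theorem \ref{tempered} plus routine measurability of Riemann-sum approximations.
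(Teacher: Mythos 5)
Your proposal is correct and follows essentially the same route as the paper: define $S$ as $\dot X$ on the full-measure set $\Omega_{\dot X}$ and $0$ elsewhere, reduce Borel measurability to cylinder measurability via the coincidence $\mathcal B=\mathcal C$ (which the paper handles by citing Dobrushin--Minlos rather than sketching the metrizability argument), check measurability of $\omega\mapsto\scal{S(\omega)}{\varphi}$ (the paper uses joint measurability plus Fubini where you use Riemann sums, both routine), and define $p(\omega)$ and $C(\omega)$ as countable suprema over a countable dense subset of $\Sc$. The only differences are cosmetic, so no further comparison is needed.
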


\begin{proof}
 We already know from Theorem \ref{tempered} that $\PR\lp \Omega_{\dot X} \rp=1$. We define $S$ to be equal to $\dot X$ (in $\tem$) on $\Omega_{\dot X}$ and zero elsewhere. We want to be able to consider $S$ as a measurable map with values in $\mathcal S'(\R)$. More precisely, we equip $\mathcal S'(\R)$ with the weak-$*$ topology. A basis for this topology is given by cylinder sets of the form
 $$O=\bigcap_{i=1}^n\left\{ u\in \mathcal S'(\R) : \scal{u}{\varphi_i} \in A_i\right\},$$
 where, for all $i\I n, \ \varphi_i$ is an element of $\mathcal S(\R)$, $n$ is an integer and $A_i$ is an open set in $\R$. The $\sigma$-field  generated by all cylinder sets is called the cylinder $\sigma$-algebra and is denoted by $\mathcal C$. We first show that $S : \lp \Omega, \mathcal F\rp \longrightarrow \lp \tem, \mathcal C \rp$ is measurable. For this, clearly, it suffices to show that for all cylinder sets $O$ as above, the set $ S^{-1}\lp O\rp =\left \{ \omega \in \Omega : S(\omega) \in O \right \}$ belongs to $\mathcal F$. Clearly,
\begin{align*}
 S^{-1}\lp O\rp &= \bigcap_{i=1}^n \left \{ \omega \in \Omega : \scal{S(\omega)}{ \varphi_i} \in A_{i} \right\} \, .
\end{align*}
The map $(t,\omega) \rightarrow X_t(\omega)$ is jointly measurable so by Fubini's Theorem, the map $\scal{S}{ \varphi_i}: \Omega \longrightarrow \R$ is $\mathcal F$-measurable and therefore $ S^{-1}\lp O\rp\in \mathcal F$. The Borel $\sigma$-field $\m B$ contains $\m C$ since every cylinder set is an open set. The converse inclusion is not immediate: see \cite[Proposition 2.1]{minlos} for a proof of the equality $\m B=\m C$. This fact is also mentioned in \cite[ p.41]{fernique}. 

The space $\mathcal S(\R)$ is separable (see \cite[10.3.4 p.176]{pietsch}) and we let $A$ be a countable dense subset. Then the measurability of the maps $C$ and $p$ comes from the fact that we can choose
$$p(\omega)= \min \left\{ p\in \N : \sup_{\varphi \in A} \frac{\left | \scal{S}{\varphi}\right |}{\mathcal N_p (\varphi)}(\omega) <+\infty \right \}\, ,$$
and 
$$C(\omega)= \sup_{\varphi \in A} \frac{\left | \scal{S}{\varphi}\right |}{\mathcal N_{p(\omega)} (\varphi)}(\omega)\, .$$
 \end{proof}

\begin{rem}\label{functional}
 An alternate proof of Theorem \ref{tempered}\textit{(ii)} is as follows. We can restrict to the case where $X$ is a compound Poisson process with jump times $(S_n)_{n\s 1}$. We construct here a solution to the equation $u'(\omega)=\dot X(\omega)$ such that $u(\omega)\in \tem$. Let $\theta \in \mathcal D(\R)$ be such that $\theta\s 0, \ \int_\R\theta=1$ and $\text{supp}\, \theta \subset [0,1]$. Then let $\varphi\in \Sc$. There exists a function $\Phi \in \Sc$ such that $\varphi =\Phi'$ if and only if $\int_\R \varphi=0$ (consider $\Phi(x)=\int_{-\infty}^x \varphi(t) dt$ for the if part, the other direction is obvious). Then consider the linear functional $I$ on $\Sc$ defined by 
\begin{align}\label{iphi}
 I \varphi(t)=\int_{-\infty}^t \lp \varphi(s)-\theta(s) \int_\R \varphi\rp \dd s\, .
\end{align}
This functional defines an antiderivative on $\Sc$: for any $\varphi \in \Sc, \, I\lp \varphi ' \rp= \varphi$. Also, the reader can easily check that for all $p \in \N$, 
$$\sup_{t\in \R} |t|^p |I\varphi(t)| \I C_p \m N_{p+2}(\varphi)\, ,$$
for some constant $C_p$ depending only on $p$, and therefore, $I$ is a continuous linear functional with values in $\Sc$. 
 
This implies that for $\omega \in \Omega_{\dot X}$, we can define a tempered distribution $u(\omega)$ by
 $$ \scal{u(\omega)}{\varphi}=- \scal{\dot X(\omega)}{I\varphi} \, ,  \qquad \text{for all} \ \varphi \in \Sc\, .$$
This tempered distribution satisfies $u'(\omega)=\dot X(\omega)$, since for any $\varphi \in \Sc$,
$$ \scal{u'(\omega)}{\varphi}= -\scal{u(\omega)}{\varphi '} =\scal{\dot X(\omega)}{I\varphi '}= \scal{\dot X(\omega)}{\varphi}\, .$$
This implies that $u$ and $X$ only differ by a (random) constant. Indeed, 
$$\scal{ u(\omega) }{\varphi}=-\scal{\dot X(\omega)}{I\varphi}=\scal{X(\omega)}{\lp I\varphi\rp^\udot}=\scal{X(\omega)}{\varphi}-\scal{X(\omega)}{\theta}\scal 1 \varphi \, .$$
Therefore, this (random) constant is $\scal{X(\omega)}\theta$, and
\begin{align}\label{xomega}
 X(\omega)=u(\omega)+\scal{X(\omega)}\theta \cdot 1\, ,
\end{align}
 and so $X(\omega)\in \tem$ since the right-hand side belongs to $\tem$. Therefore $\Omega_{\dot X} \subset \Omega_X$. By Corollary \ref{levyprocesstempered} \textit{(ii)}, we conclude that $\PR \lp \Omega_{\dot X}\rp=0$.
\end{rem}

 \section{L\'evy fields and L\'evy noise in $\temd$}\label{levy_dim_d}
 
 \indent\indent

In this section, we consider the same questions as in Section \ref{levy_dim_one}, but for a generalization of the notion of L\'evy process, where the ``time'' parameter is in $\R_+^d$, with $d\s 1$. A general presentation of this theory of \emph{multiparameter L\'evy fields} can be found in \cite{adler}; see also \cite{walshdalang}. 

In the following, for any $k \in \N, \ \mathbf 1_k$ (respectively $\mathbf 0_k, \, \mathbf 2_k$) denotes the $k$-dimensional vector with coordinates all equal to $1$ (respectively to $0,\, 2$). We recall that $(\Omega, \mathcal F , \PR )$ is a complete probability space. Let $(X_t)_{t\in \R_+^d}$ be a $d$-parameter random field. For $s,t \in \R^d_+$ with $s=(s_1,\dots,s_d)$, $t=(t_1,\dots,t_d)$, we say that $s\I t$ if $s_i \I t_i$ for all $1\I i \I d$, and $s<t$ if $s_i < t_i$ for all $1\I i \I d$. For $a\I b \in \R^d_+$, we define the box $]a,b]=\left\{t\in \R_+^d : a<t \I b \right \}$, and the increment $ \Delta_a^b X$ of $X$ over the box $]a,b]$ by
\begin{equation}\label{increment}
 \Delta_a^b X=\sum_{\varepsilon \in \{0,1\}^d}(-1)^{|\varepsilon |}X_{c_\varepsilon (a,b)} \ ,
\end{equation}
where for any $\eps \in \{0,1\}^d$, we write $|\eps|=\sum_{i=1}^d \eps_i$ and $c_\eps(a,b) \in \R^d_+$ is defined by $ c_\eps(a,b)_i=a_i \mathds 1 _{\{\eps_i=1\}}+b_i \mathds 1 _{\{\eps_i=0\}}$, for all $1\I i \I d$.
We can check that when $d=1$, then $\Delta_a^bX=X_b-X_a$. In fact, for all $d\s 1$, $ \int_{[a,b]}\varphi^{ (\mathbf 1_d)}(t) \dd t= \Delta_a^b \varphi$. The next definition is a generalization of the \textit{càdlàg} property to processes indexed by $\R_+^d$. We define the relations $\mathcal R=(\mathcal R_1, ..., \mathcal R_d)$, where $\mathcal R_i$ is either $\I$ or $>$, and $a \,\mathcal R \, b$ if and only if $a_i\mathcal R_i b_i$ for all $1\I i \I d$. 
\begin{df}\label{lamp}
 Using the terminology in \cite{adler} and \cite{straf}, we say that $X$ is \emph{lamp} (for limit along monotone paths) if we have the following: $(i)$ For all $2^d$ relations $\mathcal R$, $\displaystyle \lim_{u\to t, \, t\mathcal R u} \ X_u$ exists; $(ii)$ If $\mathcal R=\lp \I, ..., \I \rp$ then $\displaystyle X_t=\lim_{u\to t, \, t\mathcal R u} \ X_u$; and $(iii)$ $X_t=0$ if $t_i=0$ for some $1\I i \I d$.
\end{df}

\noindent
We are now ready to give the definition of a L\'evy field in $\R_+^d$.

\begin{df}\label{levyfield}
$X=(X_t)_{t\in \R_+^d}$ is a $d$-parameter L\'evy field if it has the following properties:
\begin{itemize}
 \item[(i)]$X$ is \textit{lamp} almost surely.
 \item[(ii)]$X$ is continuous in probability.
 \item[(iii)]For any sequence of disjoint boxes $]a_k,b_k], \ 1\I k \I n$, the random variables $\Delta_{a_k}^{b_k} X$ are independent.
 \item[(iv)]Given two boxes $]a,b]$ and $]c,d]$ such that $]a,b]+t=]c,d]$ for some $t\in \R^d$, the increments $\Delta_a^bX$ and $\Delta_c^dX$ are identically distributed. 
\end{itemize}
The jump $\Delta_t X$ of $X$ at time $t$ is defined by $ \displaystyle \Delta_tX=\lim_{u\to t, \, u<t} \Delta_u^t X$.
\end{df}

This definition coincides with the notion of L\'evy process when $d=1$. In addition, for all $t=(t_1,... , t_d)\in \R_+^d$, and for all $1\I i\I d$, the process $X^{i,t}_\cdot=X_{(t_1,...,t_{i-1},\, \cdot \, ,t_{i+1},...,t_d)}$ is a L\'evy process (the notation here means that it is the process in one parameter obtained by fixing all the coordinates of $t$ except the $i$-th).


The Brownian sheet is an example of such a $d$-parameter L\'evy field. It is the analog in this framework of Brownian motion and further properties of this field are detailed in \cite{dalang_levelset}, \cite{walshdalang_browniansheet}, \cite{multi} or \cite{spdewalsh}. 

For all $t\in \R_+^d, \ X_t$ is an infinitely divisible random variable, and by the L\'evy-Khintchine formula \cite[Chapter 2, Theorem 8.1 p.37]{sato}, there exists real numbers $\gamma_t,\ \sigma_t$ and a L\'evy measure $\nu_t$ such that $ \E\lp e^{i u X_t} \rp = \exp \left [ i u \gamma_t -\frac 1 2 \sigma^2_t u^2 + \int_\R\lp e^{iux}-1-iux \mathds 1_{|x|\I 1} \rp \nu_t ( \! \dd x) \right ]$. The triplet $(\gamma_t, \sigma_t, \nu_t)$ is called the characteristic triplet of $X_t$. Since for all $1\I i\I d$ and $t\in \R_+^d$, the process $X^{i,t}$ defined above is a L\'evy process, we deduce that there exists a triplet $(\gamma, \sigma, \nu)$ where $\gamma, \sigma \in \R$ and $\nu$ is a L\'evy measure such that $(\gamma_t , \sigma_t, \nu_t)=(\gamma, \sigma, \nu)\text{Leb}_d([0,t])$, where $\leb{d}{\! \dd x}$ is $d$-dimensional Lebesgue measure. We call $(\gamma, \sigma, \nu)$ the characteristic triplet of the L\'evy field $X$. We can now state the multidimensional analog of the L\'evy-Itô decomposition, taken from \cite[Theorem 4.6]{adler} particularized to the case of stationary increments (see also \cite{walshdalang}).

\begin{theo}\label{lid}
 Let $X$ be a $d$-parameter L\'evy field with characteristic triplet $(\gamma, \sigma, \nu)$. The following holds: 
\begin{itemize}
 \item[(i)] The jump measure $J_X$ defined on $\R_+^d \times \lp \R\backslash \{0\} \rp $ by $J_X(B)=\# \left \{ (t, \Delta_t X) \in B   \right \}$, for $B$ in the Borel $\sigma$-algebra of $\R_+^d \times \lp \R\backslash \{0\} \rp$, is a Poisson random measure with intensity $ \text{Leb}_d\times \nu$.
 \item[(ii)] For all $t\in \R_+^d$, we have the decomposition
 $$X_t= \gamma \text{Leb}_d ([0,t])+ \sigma W_t + \int_{[0,t]}\int_{|x|>1}xJ_X( \! \dd s, \dd x)+ \int_{[0,t]}\int_{|x|\I 1}x\tilde J_X( \! \dd s, \dd x)\, ,$$
 where $W$ is a Brownian sheet, $\tilde J_X=J_X- \text{Leb}_d \times \nu$ is the compensated jump measure, and the equality holds almost surely. In addition, the terms of the decomposition are independent random fields.
\end{itemize}
\end{theo}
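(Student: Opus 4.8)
The plan is to deduce both assertions from the general L\'evy-It\^o decomposition for $d$-parameter processes with independent increments established in \cite[Theorem 4.6]{adler}, combined with the stationarity property (iv) of Definition \ref{levyfield} and the identity $(\gamma_t,\sigma_t,\nu_t)=(\gamma,\sigma,\nu)\,\text{Leb}_d([0,t])$ recorded just before the statement. The general theorem provides, for any such field, a decomposition into a deterministic additive set function on boxes, a Gaussian part, and a part governed by a Poisson random measure counting the jumps, with some intensity on $\R_+^d\times(\R\backslash\{0\})$. The first step is to identify that intensity: property (iii) guarantees that $J_X(B)=\#\{(t,\Delta_tX)\in B\}$ has independent values over disjoint Borel sets and hence is a Poisson random measure on $\R_+^d\times(\R\backslash\{0\})$, while property (iv) forces its intensity to be translation invariant in the time variable, hence of product form $\text{Leb}_d\times\nu$ for a $\sigma$-finite measure $\nu$ on $\R\backslash\{0\}$. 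This yields (i), provided $\nu$ is a L\'evy measure.

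To check that $\int_{\R\backslash\{0\}}(1\wedge|x|^2)\,\nu(\dd x)<+\infty$, I would invoke the infinite divisibility of $X_t$ for fixed $t$ and the L\'evy-Khintchine representation recalled in the excerpt: since $\nu_t=\nu\,\text{Leb}_d([0,t])$ is a L\'evy measure for every $t$, choosing $t_0$ with $\text{Leb}_d([0,t_0])=1$ shows that $\nu=\nu_{t_0}$ is itself a L\'evy measure. The same identity pins down the constants $\gamma$ and $\sigma$ and matches them with the characteristic triplet of $X$.

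For (ii) I would carry out the classical one-parameter L\'evy-It\^o argument adapted to box increments. Split $J_X$ into its restrictions to $\{|x|>1\}$ and to $\{0<|x|\le 1\}$. The large-jump term $\int_{[0,t]}\int_{|x|>1}x\,J_X(\dd s,\dd x)$ is an a.s.\ finite sum, since $(\text{Leb}_d\times\nu)([0,t]\times\{|x|>1\})=\text{Leb}_d([0,t])\,\nu(\{|x|>1\})<+\infty$; it is the compound Poisson sheet. The small-jump term is constructed as an $L^2$-limit of compensated integrals $\int_{[0,t]}\int_{\eps<|x|\le 1}x\,\tilde J_X(\dd s,\dd x)$ as $\eps\downarrow 0$, using $\int_{|x|\le 1}|x|^2\,\nu(\dd x)<+\infty$ and the It\^o isometry for integrals against a compensated Poisson random measure. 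Subtracting these two terms and the Gaussian part $\sigma W_t$ (where $W$, the continuous Gaussian component with independent stationary box increments, is a Brownian sheet) from $X_t$ leaves a deterministic additive set function on boxes which, by stationarity and continuity in probability, must be $\gamma\,\text{Leb}_d([0,t])$. Independence of the three stochastic terms is inherited from the independence of the Gaussian part and of integrals of a Poisson random measure over disjoint regions of the $x$-variable, exactly as in the one-parameter case.

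The main obstacle I anticipate is not any single estimate but the bookkeeping needed to transfer the one-parameter construction to the box-increment setting while preserving the \emph{lamp} regularity: one must verify that the $L^2$-limit defining the small-jump sheet admits a \emph{lamp} modification, for which the appropriate tool is a multiparameter maximal inequality for the orthogonal-increment martingale structure of $\int\int x\,\tilde J_X$ over boxes, as developed in \cite{adler} and \cite{walshdalang}. Once this regularity is established, identifying the deterministic remainder as $\gamma\,\text{Leb}_d$ and reading off the independence of the summands are routine.
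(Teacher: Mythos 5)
The paper offers no proof of this theorem: it is quoted verbatim from \cite[Theorem 4.6]{adler}, ``particularized to the case of stationary increments,'' which is precisely the reduction your proposal rests on. Your sketch of how that general result specializes --- reading off the intensity $\text{Leb}_d\times\nu$ from properties (iii)--(iv) of Definition \ref{levyfield}, confirming that $\nu$ is a L\'evy measure via the L\'evy--Khintchine triplet of $X_{t_0}$, and assembling the large-jump, compensated small-jump, Gaussian and drift terms --- is correct and consistent with the paper's (implicit) treatment, so the two approaches coincide.
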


If $X$ is a $d$-parameter L\'evy field, by the \textit{lamp} property of its sample paths, it is locally bounded and defines almost surely an element of $\m D'(\R^d)$ via the $L^2$-inner product. Similarly to the one-dimensional case (see Definition \ref{defwn}), we now define the $d$-dimensional L\'evy white noise.

\begin{df}\label{levy_noise}
Let $X$ be a $d$-parameter L\'evy field with characteristic triplet $(\gamma, \sigma, \nu)$. The L\'evy white noise $\dot X$ is the $d^{\text {th}}$ cross-derivative of $X$ in the sense of Schwartz distributions: for $\omega \in \Omega$ and $\varphi \in \m D(\R^d)$, 
 $$\scal{\dot X} \varphi (\omega):= (-1)^d \scal{X}{\varphi^{(\mathbf 1_d)}} (\omega) :=(-1)^d\int_{\R_+^d}X_t (\omega) \varphi^{(\mathbf 1_d)}(t) \dd t \, ,$$
 where $\varphi^{(\mathbf 1_d)}=\frac{\partial ^d}{\partial t_1 \cdots \partial t_d} \varphi$. 
\end{df}
As in Section \ref{sec2.3}, note that the law of the multidimensional L\'evy white noise $\dot X$ is entirely characterized by the triplet $(\gamma, \sigma, \nu)$ (given that we use the truncation function $\mathds 1_{|x|\I 1}$ in the L\'evy-Itô decomposition). We will show in Proposition \ref{lnoise} that this definition is equivalent to other definitions of L\'evy white noise.

\begin{rem}\label{rem_stochastic_integral}
 Given a $d$-parameter L\'evy field $X$ with characteristic triplet $(\gamma, \sigma, \nu)$ and jump measure $J_X$, for a suitable class of functions $\varphi : \R_+^d \to \R$, we can define the stochastic integral
\begin{equation}\label{stochastic_integral}
\begin{aligned}
  \int_{\R_+^d} \varphi(s) \dd X_s &:= \gamma \int_{\R_+^d} \varphi(s) \dd s+ \sigma \int_{\R_+^d} \varphi(s) \dd W_s \\
 &\hspace{1cm}+\int_{\R_+^d} \int_{|x| >1} x\varphi(s) J_X(\! \dd s , \dd x)+\int_{\R_+^d} \int_{|x| \I1} x\varphi(s) \tilde J_X(\! \dd s , \dd x) \\
 &=\gamma A_1(\varphi)+\sigma A_2(\varphi)+A_3(\varphi)+A_4(\varphi)\, , 
\end{aligned}
\end{equation}
where the first integral is a Lebesgue integral, the second integral is a Wiener integral (see \cite[Chapter 2]{davar}) and the last two integrals are Poisson integrals (see \cite[Lemma 12.13]{kallenberg}) with the space $S= \R_+^d \times \lp \R\backslash \{0\} \rp$.
\end{rem}

The next lemma relates the definition of L\'evy white noise above with the mapping $\varphi \to \int_{\R_+^d} \varphi (s) \dd X_s$.

\begin{lem}\label{fubinicompact}
 Let $X$ be a $d$-parameter L\'evy field with characteristic triplet $(\gamma, \sigma, \nu)$ and jump measure $J_X$. Then, for all $\varphi \in \m D(\R^d)$, 
\begin{equation}
 \scal{\dot X}{\varphi}= \int_{\R_+^d}\varphi(s) \dd X_s\, .
\end{equation}
\end{lem}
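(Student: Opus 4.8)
The plan is to prove the identity $\scal{\dot X}{\varphi}=\int_{\R_+^d}\varphi(s)\dd X_s$ for $\varphi\in\m D(\R^d)$ term by term along the L\'evy-It\^o decomposition of Theorem~\ref{lid}~(ii). Writing $X_t=\gamma\,\text{Leb}_d([0,t])+\sigma W_t+\int_{[0,t]}\int_{|x|>1}xJ_X(\dd s,\dd x)+\int_{[0,t]}\int_{|x|\I1}x\tilde J_X(\dd s,\dd x)$, by linearity of the $L^2$-inner product and of $\varphi\mapsto\varphi^{(\mathbf 1_d)}$ it suffices to show that for each of the four summands $X^{(j)}$ one has $(-1)^d\int_{\R_+^d}X^{(j)}_t\varphi^{(\mathbf 1_d)}(t)\dd t=A_j(\varphi)$, with $A_j$ as in \eqref{stochastic_integral}. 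The deterministic drift term is an elementary exercise in integration by parts in $d$ variables using $\int_{[0,t]}\varphi^{(\mathbf 1_d)}=\Delta_0^t\varphi$ (which, since $\varphi$ is compactly supported, reduces to $\varphi(t)$), together with Fubini for the Lebesgue integral.

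For the three stochastic terms the common strategy is a \emph{stochastic Fubini} argument: each $X^{(j)}_t$ is itself an integral over $[0,t]$ against a random measure $M$ (Lebesgue$\times$white noise for the Wiener term, $J_X$ restricted to $\{|x|>1\}$ for the large-jump term, $\tilde J_X$ restricted to $\{|x|\I1\}$ for the small-jump term), so that
\begin{align*}
(-1)^d\int_{\R_+^d}X^{(j)}_t\,\varphi^{(\mathbf 1_d)}(t)\dd t
 &=(-1)^d\int_{\R_+^d}\!\left(\int \mathds 1_{\{s\I t\}}\,\cdot\,M(\dd s,\dd x)\right)\varphi^{(\mathbf 1_d)}(t)\dd t\\
 &=\int\!\left((-1)^d\int_{\{t\s s\}}\varphi^{(\mathbf 1_d)}(t)\dd t\right)M(\dd s,\dd x).
\end{align*}
The inner deterministic integral equals $\int_{\{t:t_i\s s_i\ \forall i\}}\varphi^{(\mathbf 1_d)}(t)\dd t$, which by the fundamental theorem of calculus in each coordinate and the compact support of $\varphi$ equals $\varphi(s)$ (the boundary terms at $+\infty$ vanish, and the $(-1)^d$ cancels the $d$ sign changes from integrating $\partial_{t_i}$ from $s_i$ to $+\infty$). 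Hence the right-hand side is $\int \varphi(s)\,\cdot\,M(\dd s,\dd x)$, which is exactly $A_2(\varphi)$, $A_3(\varphi)$, or $A_4(\varphi)$ respectively. Summing the four identities and using that $\int_{\R_+^d}\varphi^{(\mathbf 1_d)}(t)\dd t=0$ implicitly where needed gives the claim.

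The main obstacle is justifying the interchange of the deterministic Lebesgue integral $\int_{\R_+^d}(\cdots)\varphi^{(\mathbf 1_d)}(t)\dd t$ with the stochastic integral in each of the three cases. For the Wiener integral this is the classical stochastic Fubini theorem for Gaussian (or more generally $L^2$) integrators; the integrand $(t,s)\mapsto\mathds 1_{\{s\I t\}}\varphi^{(\mathbf 1_d)}(t)$ is bounded with compact support in $t$, hence square-integrable against $\dd t\otimes(\text{Leb}_d)$ on the relevant compact set, so the hypotheses are met. For the two Poisson integrals one invokes a Fubini-type result for Poisson integrals (e.g.\ via \cite[Lemma 12.13]{kallenberg} and the fact that $J_X$ and $\tilde J_X$ are, on the support of $\varphi$, integrals against a measure of finite total mass for the large jumps and a genuine $L^2$-martingale measure for the small jumps); integrability of $(t,s,x)\mapsto x\,\mathds 1_{\{s\I t\}}\varphi^{(\mathbf 1_d)}(t)$ against the appropriate (compensated) intensity follows from $\int_{|x|>1}|x|\,\nu(\dd x)<\infty$ being \emph{not} needed — only local finiteness on the compact support of $\varphi$ — and from $\int_{|x|\I1}|x|^2\nu(\dd x)<\infty$. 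A clean way to organize this, which I would adopt, is to first establish the identity for the truncated field obtained by keeping only finitely many of the large jumps and cutting the small-jump part at level $\eps$, where all integrals are finite sums or honest $L^2$-objects and Fubini is trivial, and then pass to the limit using $L^2$-convergence of the approximating fields (uniformly on the compact set supporting $\varphi$) on both sides.
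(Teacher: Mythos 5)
Your proposal is correct and follows essentially the same route as the paper: reduce to the four terms of the L\'evy--It\^o decomposition, use the identity $(-1)^d\int_{\{t\s s\}}\varphi^{(\mathbf 1_d)}(t)\dd t=\varphi(s)$, and justify the interchange by ordinary Fubini for the drift, Walsh's stochastic Fubini for the Wiener term, finiteness of the number of large jumps on $\mathrm{supp}\,\varphi$ for the compound Poisson term, and a truncation-plus-limit argument for the compensated small jumps (the paper truncates over dyadic annuli in $|x|$ and passes to the limit via Kolmogorov's convergence criterion together with a.s.\ uniform convergence on compacts, which matches your $\eps$-cutoff plan). The only blemish is the closing aside that $\int_{\R_+^d}\varphi^{(\mathbf 1_d)}(t)\dd t=0$, which is false in general (this integral equals $(-1)^d\varphi(0)$), but it is never actually used in your argument.
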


\begin{proof}
 Generically, if $\mu$ is a measure on $\R_+^d$ and if $x(t):=\mu([0,t])$, then $\frac{\partial^d}{\partial t_1 \cdots \partial t_d}x=\mu$ in $\m D'(\R^d)$. Indeed, by \eqref{increment}, for any $\varphi \in \m D(\R^d)$, 
\begin{equation}\label{measure}
\begin{aligned}
  \int_{\R_+^d} \varphi(s) \mu(\! \dd s) &= (-1)^d \int_{\R_+^d} \mu(\!\dd s) \int_{\R_+^d}\dd t \, \varphi^{(\mathbf 1_d)}(t) \mathds 1_{t \s s} \\
 &= (-1)^d \int_{\R_+^d}\dd t \, \varphi^{(\mathbf 1_d)}(t) \int_{\R_+^d}\mu(\!\dd s) \mathds 1_{t \s s}= (-1)^d \int_{\R_+^d}  \varphi^{(\mathbf 1_d)}(t) x(t) \dd t\, ,
\end{aligned}
\end{equation}
where the second equality requires a Fubini-type theorem.

Notice that for bounded Borel sets, the set function
$$ B\mapsto \tilde X(B):= \int_{\R_+^d}\mathds 1_B (s) \dd X_s$$
defines an $L^0(\Omega, \m F, \PR)$-valued measure (see e.g. \cite[Theorem 2.6]{walshdalang}), and $X_t= \tilde X([0,t])$ a.s. We shall apply the argument in \eqref{measure} separately to the four integrals in \eqref{stochastic_integral}. For the first integral, the standard Fubini's theorem applies. For the second integral, since $\varphi\in L^2(\R^d)$, it is well defined, and since it has compact support, we use the stochastic Fubini's theorem \cite[Theorem 2.6]{spdewalsh}. For the third integral, let $J_{X^P}(\! \dd s , \dd x) =\mathds 1_{|x|>1} J_{X}(\! \dd s , \dd x)$ be the jump measure of the compound Poisson part $X^P$ of the L\'evy-Itô decomposition of $X$. Then $J_{X^P}=\sum_{i\s 1}\delta_{\tau_i} \delta_{Y_i}$, where $(\tau_i, Y_i)$ are random elements of  $\R_+^d\times \lp \R\backslash \{0\} \rp$, and $A_3(\varphi)= \sum_{i\s 1} Y_i \varphi(\tau_i)$. For a fixed $\varphi$ with compact support, this is a finite sum, so Fubini's theorem applies trivially. For the term $A_4(\varphi)$, the integral is a compensated Poisson integral, and we know that it exists (see \cite[Lemma 12.13]{kallenberg}) if and only if
\begin{equation}\label{small_jumps_condition}
\int_{\R_+^d} \int_{|x|\I1}  \lp |x\varphi(s)|^2 \wedge |x\varphi(s)| \rp \dd s \, \nu(\! \dd x) <+\infty\, .
\end{equation}
Since $\varphi \in L^2(\R^d)$,
\begin{align*}
 \int_{\R_+^d} \int_{|x|\I1}  \lp |x\varphi(s)|^2 \wedge |x\varphi(s)| \rp \dd s \,  \nu(\! \dd x)  \I \| \varphi \|_{L^2}^2 \int_{|x|\I 1}x^2 \nu(\! \dd x)< +\infty \, .
\end{align*}
For $n\in \N$, define
\begin{align*}
 A_{4,n}(\varphi) :&\! \!= \int_{\R_+^d}\int_{\frac 1 {2^{n+1}} < |x| \I \frac 1 {2^n}} x  \varphi (t) \, \tilde J_X( \! \dd x, \dd t)\\
& \! \!=  \int_{\R_+^d}\int_{\frac 1 {2^{n+1}}  < |x| \I \frac 1 {2^n}} x  \varphi (t) \,  J_X( \! \dd x, \dd t) -  \int_{\R_+^d}\int_{\frac 1 {2^{n+1}} < |x| \I \frac 1 {2^n}} x  \varphi (t) \, \nu(\! \dd x) \dd t  \, .
\end{align*}
Then $A_{4,n}(\varphi)$ is a sequence of centered independent random variables (the compensated Poisson integrals are over disjoint sets) in $L^2$ and $\E\lp \lp A_{4,n}^2(\varphi)\rp \rp=\int_{\R_+^d}\varphi(s)^2 \dd s \int_{\frac 1 {2^{n+1}} \I |x| < \frac 1 {2^n}} x^2 \nu( \! \dd x)$.
 Since $\nu$ is a L\'evy measure, we see that $\sum_n \E\lp \lp A_{4,n}^2(\varphi)\rp \rp< \infty$ and by Kolmogorov's convergence criterion (see \cite[Theorem 2.5.3]{durrett}) we deduce that
 \begin{equation}\label{e3}
  \sum_{0\I k\I n} A_{4,k}(\varphi)  \to  \int_{\R_+^d}\int_{|x| \I1} x  \varphi (s) \, \tilde J_X( \! \dd x, \dd s)=A_4(\varphi) \qquad \text{as} \ n\to +\infty, \ \text{a.s. }
\end{equation}
For each $n\in \N$, since the L\'evy measure $\nu$ is finite on compact subsets of $\R_+^d \times \left [ \frac 1 {2^{n+1}} , \frac 1 {2^n} \right ]$, Fubini's theorem applies to the set function $B\mapsto A_{4,n}(\mathds 1_B)$ in the same way it did for $A_3$ and $A_1$. Therefore, letting 
$$ X^{M,n}_t=\int_{\R_+^d}\int_{\frac 1 {2^{n+1}}  < |x| \I \frac 1 {2^n}} x \mathds 1_{t\s s} \tilde J_X( \! \dd s, \dd x)\, ,$$
the argument in \eqref{measure} implies that
$$A_{4,n}(\varphi)=(-1)^d \int_{\R_+^d} \varphi^{(\mathbf 1_d)}(t) X_t^{M,n} \dd t \, .$$
By \cite[Theorem 4.6]{adler} (see also \cite[Theorem 2.3]{walshdalang}), $\sum_{0\I k\I n} X_t^{M,k} \to X^M_t$, where $X^M$ is the small jumps part of $X$, and the convergence is a.s., uniformly on compact subsets of $\R_+^d$. Since $\varphi$ has compact support, \eqref{e3} implies that
$$A_4(\varphi)=(-1)^d \int_{\R_+^d}  \,  \varphi^{(\mathbf 1_d)}(s)  X^{M}_s  \dd s\, .$$
\end{proof}

\subsection{The case of a $p$-integrable martingale ($p>1$)}

\indent\indent
We say that a random field $M$ is a multiparameter martingale with respect to a filtration $\mathds F =\lp \m F_t\rp_{t\in \R_+^d}$ (see \cite[chapter 7, section 2 p.233]{multi}) if $M$ is $\mathds F$-adapted, integrable, and for all $s\I t \in \R^d_+$, then $\E(M_t |\mathcal F_s)=M_s$. We will also need the notion of commuting filtration (see \cite[Chapter 7, section 2, Definition p.233]{multi} ). By \cite[Theorem 2.1.1 in chapter 7]{multi}, to show that $\mathds F$ is commuting, it suffices to show that for any $s,t\in \R_+^d, \ \mathcal F_s$ and $\mathcal F_t$ are conditionally independent given $\mathcal F_{s\wedge t}$, where $(s\wedge t)_i=s_i\wedge t_i$. In particular, if $X$ is a $d$-parameter L\'evy field and $\mathcal F_t$ is the $\sigma$-algebra generated by the family $(X_s)_{s\I t}$, then $\mathds F$ is commuting by the independence of the increments of $X$.

For any \emph{lamp} random field $L$, we consider, similarly to $\eqref{omega_set}$, the event
\begin{equation}\label{omega_setd}
\Omega_L= \left \{ \omega\in \Omega :  L(\omega)\in \temd  \right \}\, ,
\end{equation}
with the understanding that when $L(\omega)\in \temd$, the continuous linear functional associated with $L(\omega)$ is $\scal{L(\omega)}{\varphi}=\int_{\R_+^d} L_t(\omega)\varphi(t) \dd t$, for all $ \varphi \in \Scd$.
\begin{prop}\label{martingaled}
Fix $p>1$ and let $(M_t)_{t\in \R_+^d}$ be a multiparameter martingale with respect to a commuting filtration $\lp \m F_t \rp_{t\in \R_+^d}$, such that for all $t\in \R_+^d$, $\E \lp | M_t|^p \rp= \lp c \leb{d}{[0,t]}\rp^{\frac p 2}$ for some constant $c$. Then the set $\Omega_M$ defined as in \eqref{omega_setd} has probability one.
\end{prop}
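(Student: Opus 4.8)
\emph{Proof plan.} The plan is to combine a multiparameter maximal inequality with the Borel--Cantelli lemma to show that $M$ is slowly growing almost surely; the conclusion $\PR(\Omega_M)=1$ then follows from Remark \ref{rkpolgrowth}. Since $\Omega_M$ is defined only for \emph{lamp} random fields, $M$ is \emph{lamp}, and in particular locally bounded.

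First I would apply Cairoli's maximal inequality for multiparameter martingales with respect to a commuting filtration (see \cite[Chapter 7]{multi}): for $p>1$ and $T\in\R_+^d$,
$$\E\lp \sup_{t\I T}|M_t|^p \rp \I \lp \frac{p}{p-1} \rp^{dp}\, \sup_{t\I T}\E\lp|M_t|^p\rp\, .$$
Taking $T=N\mathbf 1_d$ for $N\in\N$ and using that $\leb{d}{[0,t]}=\prod_{i=1}^d t_i$ is nondecreasing in each coordinate, we obtain, writing $M_N^\ast:=\sup_{t\I N\mathbf 1_d}|M_t|$,
$$\E\lp (M_N^\ast)^p\rp \I \lp \frac{p}{p-1}\rp^{dp} \lp cN^d\rp^{p/2} = C\,N^{dp/2}\, ,$$
with $C$ depending only on $d$, $p$ and $c$. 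Here the supremum should be read over a countable dense subset of the box $[0,N\mathbf 1_d]$ and then extended to the whole box using the \emph{lamp} regularity of the paths (enlarging the box slightly if needed to accommodate the boundary).

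Next, fix $\beta>\frac{d}{2}+\frac{1}{p}$, so that $\lp \beta-\frac{d}{2}\rp p>1$. By Chebyshev's inequality,
$$\PR\lp M_N^\ast>N^\beta\rp \I \frac{\E\lp(M_N^\ast)^p\rp}{N^{\beta p}} \I C\,N^{-(\beta-d/2)p}\, ,$$
which is summable over $N\in\N$. By the Borel--Cantelli lemma there is an almost sure set on which $M_N^\ast\I N^\beta$ for all $N\s N_0(\omega)$. For $t\in\R_+^d$ with $\max_i t_i$ large enough, taking $N=\lceil \max_i t_i\rceil\s N_0(\omega)$ yields $t\I N\mathbf 1_d$, hence $|M_t(\omega)|\I M_N^\ast(\omega)\I N^\beta\I (1+|t|)^\beta$; for $t$ in a fixed compact set, $|M_t(\omega)|$ is bounded since $M$ is \emph{lamp}. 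Therefore $\sup_{t\in\R_+^d}|M_t(\omega)|(1+|t|)^{-\beta}<+\infty$ almost surely, i.e.\ $M$ is slowly growing a.s., and by Remark \ref{rkpolgrowth} we conclude that $\PR(\Omega_M)=1$.

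The only delicate point is the first step: one must check that the hypotheses---commuting filtration and $M_t\in L^p$---are precisely those under which Cairoli's inequality applies, and handle the measurability of the supremum via the \emph{lamp} regularity of the sample paths. Everything else is a routine combination of the maximal inequality, Chebyshev's inequality, and Borel--Cantelli.
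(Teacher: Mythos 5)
Your proof is correct, and it rests on the same key tool as the paper's argument, namely Cairoli's strong $(p,p)$ inequality for multiparameter martingales with a commuting filtration; however, the way you pass from the maximal inequality to the almost sure polynomial bound is genuinely different. The paper covers the complement of a large box $[0,t_n]$ by dyadic boxes $[a_k,b_k]$, bounds $\E\lp \sup_{s\in[a_k,b_k]}|s|^{-\alpha}|M_s|\rp$ on each, sums over the dyadic indices, and concludes that $\sup_{s\notin[0,t]}|s|^{-\alpha}|M_s|\to 0$ in probability as all coordinates of $t$ tend to infinity; since these suprema are monotone in $t$, this upgrades to almost sure convergence, with $\alpha=\lfloor d/2\rfloor+1$. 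You instead apply the maximal inequality on the growing cubes $[0,N\mathbf 1_d]$, use Chebyshev's inequality to get a summable bound for $\PR\lp M_N^\ast>N^\beta\rp$ for any $\beta>\frac d2+\frac1p$, and invoke Borel--Cantelli. Your route is more elementary (no decomposition of the complement of a box, no passage from convergence in probability to almost sure convergence) and is arguably the cleaner way to obtain exactly what the proposition needs, namely that $M$ is slowly growing a.s.; the paper's argument yields the marginally stronger conclusion that $|t|^{-\alpha}|M_t|\to 0$ a.s. Both arguments share the same delicate points, which you correctly identify and handle: the measurability of the supremum (via the \emph{lamp} regularity and a countable dense set, enlarging the box to accommodate right-limits at the upper boundary) and local boundedness on compacts (again from the \emph{lamp} property), both of which the paper also relies on implicitly.
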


\begin{proof}
 Similarly to the one dimensional case, we control the supremum of $|t|^{-\alpha}|M_t|$ as $|t| \to +\infty$, or, equivalently, the supremum of $|s|^{-\alpha}|M_s|$ for $s \in \R_+^d \setminus [0,t]$ as $\min_{i=1,\dots,d}\, t_i \to  +\infty$, and prove that the limit in probability of this supremum, as all the coordinates of $t$ go to $+\infty$, is zero. The proof uses the multidimensional analog of Doob's $L^p$ inequality: Cairoli's Strong $(p,p)$ inequality (see \cite[Chapter 7, Theorem 2.3.2]{multi}). For all $i\in \N\backslash \{0\}$, let $x_i=2^{i-1}$ and $x_0=0$. For $k=\lp k_1,...,k_d\rp \in \N^d$, let $a_k=(x_{k_1},...,x_{k_d})$, and let $b_k=(2^{k_1},...,2^{k_d})$. We fix $k\in \N^d, \, k \neq (0,\dots,0)$. By using successively Jensen's inequality and Cairoli's inequality, for any $\alpha>0$, we have
\begin{align*}
\E \lp \sup_{s\in [a_k,b_k]}\frac{\left | M_s \right |}{|s|^\alpha} \rp \I \frac 1 {|a_k|^\alpha} \E\lp \sup_{s\I b_k} |M_s|^p\rp^{\frac 1 p}
 \I\frac {c_p} {|a_k|^\alpha} \E\lp  |M_{b_k}|^p\rp^{\frac 1 p}
\I  \frac{c_p\sqrt{c \, \text{Leb}_d([0,b_k])}}{|a_k|^\alpha}\, ,
 \end{align*}
 for some constant $c_p$ depending only on $p$ and the dimension $d$, where $|a_k|$ and $|s|$ denote here the Euclidian norm. Since $k_1 \vee \cdots \vee k_d \s 1$, we have $|a_k|\s 2^{k_1\vee \cdots \vee k_d -1}$, hence 
\begin{equation}
\label{doob} 
 \E \lp \sup_{s\in [a_k,b_k]}\frac{\left | M_s \right |}{|s|^\alpha} \rp\I c_p \sqrt c 2^{\frac 1 2\sum\limits_{i=1}^d k_i}2^{-\alpha \lp k_1\vee \cdots \vee k_d -1 \rp }\I c_p \sqrt c 2^{\alpha}2^{-\lp \frac \alpha d -\frac 1 2\rp\sum\limits_{i=1}^d k_i}\, .
\end{equation}
We choose $\alpha =\lfloor \frac d 2 \rfloor +1$. Let $t\in \R_+^d$ be far enough from the origin (we will consider the limit as all the coordinates of $t$ go to $+\infty$), and for all $1\I i\I d$, let $n_i$ be the largest integer such that $2^{n_i}\I t_i$ and let $n=(n_1,...,n_d)$. We can suppose that $n_i \s 2$ for all $1\I i\I n$. We write $\Xi$ for the set of all relations $\mathcal R$ of the form $(r_1,..., r_d)$, where for all $i\in \left \{ 1, ..., d\right \}, \ r_i \in \left \{ \I, \s \right \}$ and $\mathcal R \neq (\I,...,\I)$. Then $\left [0, t_n\right ] \subset [0,t]$, where $t_n=(2^{n_1},...,2^{n_d})$. The complement of the box $[0,t_n]$ in $\R_+^d$ is covered by boxes of the form $[a_k, b_k]$,  where $k\in \N^d$ and $k \m R n$ for some $\m R\in \Xi$. Therefore,
\begin{align*}
 \PR\lp \sup_{s\notin [0,t]} \frac{|M_s|}{|s|^\alpha} > \varepsilon \rp \I \PR\lp \sup_{s\notin [0,t_n]} \frac{|M_s|}{|s|^\alpha} > \varepsilon   \rp
& \I \sum_{\mathcal R \in \Xi}\underset{ k\mathcal R n}{\sum_{k\in  \N^d}} \PR\lp \sup_{s\in [a_k,b_k]} \frac{|M_s|}{|s|^\alpha} > \varepsilon   \rp\\
 &\I \frac {c_p \sqrt c 2^{\alpha}} \varepsilon \sum_{\mathcal R \in \Xi} \underset{ k\mathcal R n}{\sum_{k\in  \N^d}} \ 2^{-\lp \frac \alpha d -\frac 1 2\rp\sum\limits_{i=1}^d k_i} \underset{t  \twoheadrightarrow +\infty}{\longrightarrow} 0 \, ,
\end{align*}
where $t \twoheadrightarrow +\infty$ means that $t_1\wedge ... \wedge t_d \to +\infty$. To check that the limit is indeed zero, one has that for any fixed $\m R \in \Xi$, at least one of the inequalities in $\m R$ is $\geq$. By symmetry, we can suppose that it is the first inequality. Then
$$\underset{ k\mathcal R n}{\sum_{k\in  \N^d}} \ 2^{-\lp \frac \alpha d -\frac 1 2\rp\sum\limits_{i=1}^d k_i} \leq C_{\alpha, d} \sum_{k_1 \s n_1}2^{-\lp \frac \alpha d -\frac 1 2\rp k_1} \underset{n_1\to +\infty} \to 0\, .$$
The result follows since $\Xi$ is a finite set. Then $\sup_{s\notin [0,t]} |s|^{-\alpha}|X_s^M|  \to 0$ in probability as $t  \twoheadrightarrow +\infty$, therefore $|t|^{-\alpha}|M_t| \to 0$ a.s as $|t|  \to +\infty$. 
By the \emph{lamp} property of $M$ we deduce that $M$ is slowly growing, and by Remark \ref{rkpolgrowth} we deduce that $\PR(\Omega_M)=1$.
\end{proof}

\noindent
\begin{cor} \label{martingalecord}
 Let $X$ be a $d$-parameter L\'evy field with characteristic triplet $(\gamma, \sigma, \nu)$ and L\'evy-Itô decomposition $X_t=\gamma \leb{d}{[0,t]} + \sigma W_t +X^P_t+ X^M_t$ where $X^P$ is the large jump part of the decomposition and $X^M$ is the compensated small jumps part. Let $Y_t=\gamma \leb{d}{[0, t]} +\sigma W_t+X^M_t$. Then the set $\Omega_Y$ defined in \eqref{omega_setd} has probability one.
 \end{cor}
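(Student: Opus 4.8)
The plan is to decompose $Y = \tilde Y + \gamma\,\mathrm{Leb}_d([0,\cdot])$, where $\tilde Y_t = \sigma W_t + X^M_t$, and to show that $\tilde Y$ falls under the hypotheses of Proposition \ref{martingaled}, while the drift term $t \mapsto \gamma\,\mathrm{Leb}_d([0,t])$ is manifestly slowly growing (it is a polynomial $\gamma\prod_i t_i$ on $\R_+^d$). Since a sum of a tempered distribution and a slowly growing function is again a tempered distribution, this reduces the corollary to verifying that $\tilde Y$ defines a tempered distribution almost surely.

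To apply Proposition \ref{martingaled} to $\tilde Y$, I would first check that $\tilde Y$ is a multiparameter martingale with respect to a commuting filtration. By Theorem \ref{lid}, $W$ and $X^M$ are independent random fields, each with independent increments in the sense of Definition \ref{levyfield}; the Brownian sheet $W$ is a martingale for its natural filtration, and $X^M$, being the $L^2$-limit of the compensated small-jump integrals $\sum_{k\le n} X^{M,k}$ constructed in the proof of Lemma \ref{fubinicompact}, is also a multiparameter martingale (each $X^{M,k}$ is a centered random field with independent increments, hence a martingale, and the convergence is in $L^2$ uniformly on compacts). The filtration $(\m F_t)$ generated by $(X_s)_{s\le t}$ — equivalently by the pair $(W, X^M)$ up to time $t$ — is commuting by the independence-of-increments argument recalled just before Proposition \ref{martingaled}.

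The remaining point is the moment condition: I need $\E(|\tilde Y_t|^p) = (c\,\mathrm{Leb}_d([0,t]))^{p/2}$ for some $p > 1$ and some constant $c$. Here one should take $p = 2$. Since $W$ and $X^M$ are independent and centered, $\E(\tilde Y_t^2) = \sigma^2\,\E(W_t^2) + \E((X^M_t)^2) = \sigma^2\,\mathrm{Leb}_d([0,t]) + \mathrm{Leb}_d([0,t])\int_{|x|\le 1} x^2\,\nu(\dd x)$, using that the variance of the Brownian sheet at $t$ is $\mathrm{Leb}_d([0,t])$ and that $\E((X^M_t)^2) = \mathrm{Leb}_d([0,t])\int_{|x|\le 1}x^2\,\nu(\dd x)$ by the isometry for the compensated Poisson integral (this last identity also appears, restricted to dyadic shells, in the proof of Lemma \ref{fubinicompact}). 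Thus $\E(\tilde Y_t^2) = c\,\mathrm{Leb}_d([0,t])$ with $c = \sigma^2 + \int_{|x|\le 1}x^2\,\nu(\dd x) < \infty$, which is exactly the hypothesis of Proposition \ref{martingaled} with $p = 2$. Hence $\PR(\Omega_{\tilde Y}) = 1$, and therefore $\PR(\Omega_Y) = 1$.

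The main obstacle I anticipate is not any single hard estimate but rather the bookkeeping needed to justify that $X^M$ genuinely is a multiparameter martingale for a commuting filtration in the precise sense required by Proposition \ref{martingaled} (adaptedness, integrability, and the full $d$-parameter martingale property $\E(X^M_t \mid \m F_s) = X^M_s$ for $s \le t$), since $X^M$ is only defined as an $L^2$-limit; one passes this property to the limit using that $L^2$-convergence preserves conditional expectations. Everything else — the moment computation and the additivity of the slowly growing drift — is routine given the results already established.
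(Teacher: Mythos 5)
Your proposal is correct and follows essentially the same route as the paper: decompose $Y$ into $\tilde Y=\sigma W+X^M$ plus the slowly growing drift, compute $\E(\tilde Y_t^2)=\bigl(\sigma^2+\int_{|x|\le 1}x^2\,\nu(\dd x)\bigr)\mathrm{Leb}_d([0,t])$, and apply Proposition \ref{martingaled} with $p=2$. The extra care you take in verifying that $X^M$ is a multiparameter martingale for a commuting filtration (passing the martingale property through the $L^2$-limit) is a point the paper treats as a ``classical result'' without elaboration, so your version is, if anything, slightly more complete.
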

\begin{proof}
 The random field $\tilde Y=\sigma W+X^M$ is a sum of two independent square integrable martingales and by a classical result on compensated Poisson integrals and Brownian sheets,
  $$\E \lp \tilde  Y^2_t \rp= \lp \sigma^2 + \int_{|x|\I 1} x^2 \nu( \! \dd x) \rp \leb{d}{[0,t]}\, ,$$  
 where the multiplicative constant is finite since $\nu$ is a L\'evy measure. Hence $\tilde Y$ verifies the hypothesis of the Proposition \ref{martingaled} with $p=2$, therefore it defines a tempered distribution a.s. Since $\tilde Y$ and $Y$ differ by a slowly growing function, we deduce that $Y$ is a tempered distribution almost surely.  
\end{proof}

\subsection{The compound Poisson sheet}

\indent\indent
By Corollary \ref{martingalecord}, for any $d$-parameter L\'evy field $X$, we have $\Omega_X \cap \Omega_Y=\Omega_{X^P}\cap \Omega_Y$.  We shall prove that $\Omega_{X^P}$ has probability $0$ or $1$. In the one dimensional setting, we used the fact that a compound Poisson process with a \pam \ is slowly growing a.s (see Proposition \ref{longterm}\textit{(i)}). As mentioned in the Introduction, the same results in a $d$-dimensional setting are to the best of our knowledge unavailable, which leads us to find another approach. In the multiparameter case, we will use properties of stochastic integrals with respect to a Poisson random measure to show that under a moment condition, a compound Poisson sheet and its associated white noise define tempered distributions. While this is in principle a special case of \cite[Theorem 3]{fageot}, in view of Corollary \ref{martingalecord}, the two statements are in fact equivalent.
 \begin{lem}\label{pnoise}
Let $\nu$ be a L\'evy measure and $M$ be a Poisson random measure on $\lp \R\backslash \{0\} \rp \times \R_+^d $ with intensity measure $ \mathds{1}_{|x|>\eta}\nu( \! \dd x)\dd t$, where $\eta>0$. Suppose that $ \int_{|x|>\eta}|x|^\alpha \nu( \! \dd x)<+\infty$ for some $\alpha>0$ (\pam) and consider the compound Poisson sheet $P_t=\int_{[0,t]}\int_{|x|>\eta}xM( \! \dd t, \dd x)$. Then 
\begin{itemize}
 \item[(i)] $M$ almost surely defines a tempered distribution via the formula
 \end{itemize}
\begin{equation}\label{def1}
  \scal M \varphi =\int_{\R_+^d}\int_{|x|>\eta}M( \! \dd t, \dd x)  \varphi(t)x \, , \qquad \varphi \in \Scd \, .
\end{equation}
\begin{itemize}
 \item[(ii)] $\PR(\Omega_{P})=1$ and for all $\varphi \in \Scd$,
 \end{itemize}
\begin{equation}\label{def2}
  \scal{ P}{\varphi}:=\int_{\R_+^d}P_t \varphi(t) \dd t= \int_{\R_+^d}\int_{|x|>\eta}M( \! \dd t, \dd x) \int_{[t,+\infty[}\dd s \, \varphi(s)x \, , 
\end{equation}
\begin{itemize}
 \item[(iii)]$M=P^{(\mathbf 1_d)}$ in $\temd$, where we recall that $P^{(\mathbf 1_d)}=\frac{\partial^d }{\partial t_1\cdots \partial t_d}P$.
\end{itemize}
\end{lem}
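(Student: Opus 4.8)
\textbf{Proof proposal for Lemma \ref{pnoise}.}

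The plan is to establish the three parts in sequence, using part (i) and (ii) as the technical core and deriving (iii) as an immediate formal consequence. For part (i), since $\nu$ restricted to $\{|x|>\eta\}$ is a finite measure (because it is a L\'evy measure, it integrates $1$ on $\{|x|>\eta\}$), the Poisson random measure $M$ can be written as $M=\sum_{i\geq 1}\delta_{(Y_i,\tau_i)}$ where, for each bounded time window, only finitely many atoms appear; globally the atoms $(\tau_i)$ accumulate only at infinity. The jump heights $(Y_i)$ are i.i.d.\ with law $\lambda^{-1}\mathds 1_{|x|>\eta}\nu(\dd x)$ and satisfy $\E(|Y_1|^\alpha)<+\infty$ by the \pam\ hypothesis. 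Thus $P_t=\sum_{\tau_i\leq t}Y_i$ is exactly a compound Poisson sheet of the type considered throughout, and the pairing $\scal{M}{\varphi}=\sum_{i\geq 1}Y_i\varphi(\tau_i)$ is a.s.\ an absolutely convergent sum because $\varphi\in\Scd$ decays faster than any polynomial while, by the multiparameter law of large numbers / Borel--Cantelli argument, $|\tau_i|$ grows like a positive power of $i$ and $|Y_i|$ grows at most polynomially in $i$ a.s.\ Concretely, I would bound $|\scal{M}{\varphi}|\leq \sum_i |Y_i|(1+|\tau_i|)^{-N}(1+|\tau_i|)^N|\varphi(\tau_i)|\leq \m N_{dN}(\varphi)\sum_i |Y_i|(1+|\tau_i|)^{-N}$ for a suitable large $N$, and show the remaining random series converges a.s.; this gives $|\scal M\varphi|\leq C(\omega)\m N_p(\varphi)$ for a fixed (random) $p$, hence $M(\omega)\in\temd$ by the characterization recalled in the Introduction.

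For part (ii), the key identity is the Fubini-type exchange $\int_{\R_+^d}P_t\varphi(t)\dd t=\int_{\R_+^d}\varphi(t)\dd t\int_{[0,t]}\int_{|x|>\eta}x\,M(\dd s,\dd x)=\int_{\R_+^d}\int_{|x|>\eta}x\,M(\dd s,\dd x)\int_{[s,+\infty[}\varphi(t)\dd t$, which is precisely formula \eqref{def2}; this exchange is justified exactly as in the argument \eqref{measure} of Lemma \ref{fubinicompact}, noting that for fixed $\omega$ the measure $M$ has only finitely many atoms in any compact set and $\int_{[s,+\infty[}|\varphi(t)|\dd t$ is finite and rapidly decreasing in $|s|$. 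To get $\PR(\Omega_P)=1$, I would observe that the function $s\mapsto \Psi(s):=\int_{[s,+\infty[}\varphi(t)\dd t$ satisfies, for $\varphi\in\Scd$, bounds of the form $|s|^p|\Psi(s)|\leq C_p\m N_{p+d}(\varphi)$ (integrating the decay of $\varphi$ a bounded number of times in each coordinate, with the box $[s,+\infty[$ contributing a factor controlled by $|s|^{-1}$ in each direction once $s$ is large), so that $\Psi\in\Scd$ with $\m N_p(\Psi)\leq C\,\m N_{p+d}(\varphi)$. Then the right-hand side of \eqref{def2} is just $\scal{M}{\Psi}$, which is a.s.\ bounded by $C(\omega)\m N_p(\Psi)\leq C'(\omega)\m N_{p+d}(\varphi)$ by part (i); since the right-hand side of \eqref{def2} equals $\int_{\R_+^d}P_t\varphi(t)\dd t$, this shows $P(\omega)$ defines a tempered distribution, i.e.\ $\PR(\Omega_P)=1$. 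Alternatively one may invoke Corollary \ref{martingalecord} together with the one-dimensional reasoning, but the direct estimate via $\Psi$ is self-contained.

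For part (iii), with both $M$ and $P$ now known to be a.s.\ tempered distributions, it suffices to check $M=P^{(\mathbf 1_d)}$ in $\m D'(\R^d)$, since $\m D(\R^d)\subset\Scd$ is dense and both sides are continuous on $\Scd$; but $M=P^{(\mathbf 1_d)}$ in $\m D'(\R^d)$ is exactly the generic measure-derivative computation \eqref{measure} of Lemma \ref{fubinicompact} applied to $\mu=M$ (pushed forward to a measure on $\R_+^d$ by $B\mapsto\int\int\mathds 1_B(t)x\,M(\dd t,\dd x)$, for which $P_t$ is the ``distribution function''). Indeed for $\varphi\in\m D(\R^d)$, $\scal{P^{(\mathbf 1_d)}}{\varphi}=(-1)^d\scal{P}{\varphi^{(\mathbf 1_d)}}=(-1)^d\int_{\R_+^d}P_t\varphi^{(\mathbf 1_d)}(t)\dd t=\scal{M}{\varphi}$ by \eqref{def2} with $\varphi$ replaced by $(-1)^d\varphi^{(\mathbf 1_d)}$ (noting $\int_{[t,+\infty[}(-1)^d\varphi^{(\mathbf 1_d)}(s)\dd s=\varphi(t)$ for compactly supported $\varphi$). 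The main obstacle I anticipate is making the Fubini exchange and the almost-sure convergence of the random series genuinely rigorous: one must control the joint growth of the atom locations $|\tau_i|$ and heights $|Y_i|$, which requires a multiparameter analogue of the Borel--Cantelli / jump-time estimate used in Proposition \ref{growth}, and one must verify uniform (in $\omega$) measurability so that the exceptional null set can be chosen once and for all; the polynomial bound on $\m N_p(\Psi)$ in terms of $\m N_{p+d}(\varphi)$ also needs care near the coordinate hyperplanes where the box $[s,+\infty[$ is unbounded.
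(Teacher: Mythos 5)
Your proposal is correct and follows the same overall skeleton as the paper's proof (atomic representation $M=\sum_i\delta_{\tau_i}\delta_{Y_i}$, absolute convergence of the random series for (i), a Fubini exchange for (ii), and (iii) as a formal consequence of \eqref{def2} applied to $\varphi^{(\mathbf 1_d)}$), but the engine of part (i) is genuinely different. The paper never estimates the atoms pathwise: it verifies the deterministic integrability criterion $\int_{|x|>\eta}\int_{\R_+^d}(|x\varphi(t)|\wedge 1)\,\dd t\,\nu(\dd x)<+\infty$ of \cite[Lemma 12.13]{kallenberg} for $\varphi$ and for the dominating function $g_r(t)=(1+|t|^r)^{-1}$ (choosing $r$ with $d/r\leqslant\alpha\wedge\tfrac12$, which is where \pam\ enters), concluding that $\sum_i|Y_i|g_r(\tau_i)<+\infty$ a.s.\ and then getting continuity of $\varphi\mapsto\scal{M}{\varphi}$ by dominated convergence along sequences $\varphi_n\to 0$. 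You instead control $|\tau_i|\gtrsim i^{1/d}$ and $|Y_i|=O(i^{1/\alpha+1})$ by Borel--Cantelli and sum directly; this buys an explicit random seminorm bound $|\scal{M}{\varphi}|\leqslant C(\omega)\m N_p(\varphi)$ with $p$ deterministic (slightly more information than the paper extracts at this stage), at the cost of having to justify that the marks $Y_i$, reordered by $|\tau_i|$, remain i.i.d.\ and independent of the points --- true by the marking property, but worth stating. Two small repairs are needed and you have essentially flagged both: $\Psi(s)=\int_{[s,+\infty[}\varphi$ is \emph{not} in $\Scd$ (it tends to $\int\varphi$ as a coordinate of $s$ tends to $-\infty$), so you must use only the decay of $\Psi$ on $\R_+^d$, exactly as the paper does by proving the estimate of Lemma \ref{capitalphi} only for $t\geqslant a$ and by pairing $M$ (supported in $\R_+^d$) against $\Psi$ rather than claiming $\Psi\in\Scd$; and the Fubini exchange in (ii) should be justified by the explicit bound $\sum_i|Y_i|\int_{[\tau_i,+\infty[}|\varphi|<+\infty$, which your part (i) estimates already deliver. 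With these adjustments the argument is complete.
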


\begin{proof}
Since $M$ is a Poisson random measure on $\R_+^d\times\lp \R\backslash \left \{ 0\right \}\rp  $ with jumps of size larger than $\eta$, there are (random) points $(\tau_i, Y_i)_{i\s 1}\in  \R_+^d \times \lp \R\backslash [-1,1]\rp$ such that $M=\sum_{i\s 1}\delta_{\tau_i} \delta_{Y_i} $. To prove \textit{(i)}, we first need to check that the integral in \eqref{def1} is well defined. Let $\varphi \in \Scd$. The stochastic integral is a Poisson integral, and it is well defined (as the limit in probability of Poisson integrals of elementary functions) if and only if (see \cite[Lemma 12.13]{kallenberg})
\begin{equation}\label{condition}
 \int_{|x|>\eta} \int_{\R_+^d} \lp \left |x\varphi(t)\right | \wedge 1\rp \dd t \, \nu( \! \dd x) <+\infty \, .
\end{equation}
 Let $r\in \N$. There is a constant $C>1$ such that $\sup_{t\in \R^d_+} (1+|t|^r)|\varphi(t)| \I C < +\infty$. Then $ \left |x\varphi(t)\right | \wedge 1 \I \frac{C|x|}{1+|t|^r} \wedge 1$. We write $V_d$ for the volume of the $d$-dimensional unit sphere. Then, for $|x|>1$,
\begin{align*}
 \int_{\R_+^d} \lp \left |x\varphi(t)\right | \wedge 1\rp \dd t & \I \int_{\R_+^d} \lp \frac{C|x|}{1+|t|^r} \wedge 1\rp \dd t \\
 &\I d V_d \int_{\R_+} \lp \frac{C|x|}{1+u^r} \wedge 1\rp u^{d-1} \dd u \\
 & \I d V_d \lp \int_0^{\lp C|x|-1 \rp^{\frac 1 r}} u^{d-1} \dd u+C|x| \int_{\lp C|x|-1 \rp^{\frac 1 r}}^{+\infty} \frac{u^{d-1}}{1+u^r} \dd u \rp \\
 &\I V_d \lp C|x|-1 \rp^{\frac d r}+d V_d C|x| \int_{\lp C|x|-1 \rp^{\frac 1 r}}^{+\infty} \frac{u^{d-1}}{1+u^r} \dd u \, .
\end{align*}
The last integral has to be well defined so we take $r>d$, and then
\begin{align*}
  \int_{\lp C|x|-1 \rp^{\frac 1 r}}^{+\infty} \frac{u^{d-1}}{1+u^r} \dd u \I \int_{\lp C|x|-1 \rp^{\frac 1 r}}^{+\infty} u^{d-1-r} \dd u = \frac 1 {r-d} \lp C|x|-1 \rp^{\frac {d-r} r} \, ,
\end{align*}
so 
$$ \int_{\R_+^d} \lp \left |x\varphi(t)\right | \wedge 1\rp \dd t \I V_d \lp C|x|-1 \rp^{\frac d r}+\frac{d V_d C|x| }{r-d}\lp C|x|-1 \rp^{\frac {d-r} r}\, .$$
 We deduce that there exists a constant $C'$ such that for $|x|>1$,
\begin{equation}\label{eq3.12}
  \int_{\R_+^d} \lp \left |x\varphi(t)\right | \wedge 1\rp \dd t \I C' |x|^{\frac d r} \, .
\end{equation}
We then choose $r$ large enough so that $\frac d r\I \alpha \wedge \frac 1 2$, in which case the moment condition on $\nu$ gives us \eqref{condition}, and therefore the Poisson integral is well defined and a.s. finite. Set $g_r(t)= \frac 1 {1+|t|^r}\, , \ t\in \R^d_+$. Then for $r$ sufficiently large,
$$\int_{\R_+^d}\int_{|x|>\eta} M(\! \dd t , \dd x) g_r(t) |x|$$
is well-defined, since by \eqref{eq3.12} and \pam,
$$\int_{|x|>\eta} \int_{\R_+^d} \lp \left |xg_r(t)\right | \wedge 1\rp \dd t \nu ( \! \dd x)<+\infty\, .$$
Since $M=\sum_{i}\delta_{\tau_i} \delta_{Y_i}$,
$$\scal M \varphi = \sum_i Y_i \varphi(\tau_i)\, .$$
Now suppose $\varphi_n \to 0$ in $\Scd$. Then for large $n$, $|\varphi_n| \I g_r$, and
\begin{align*}
 |\scal M {\varphi_n} | = |\sum_i \varphi_n(\tau_i)Y_i | &\I \sum_i |Y_i| g_r(\tau_i) \\
 &=\int_{\R_+^d}\int_{|x|>\eta} M(\! \dd t , \dd x) g_r(t) |x| <+\infty \qquad \text{a.s.}
\end{align*}
For a.a. fixed $\omega\in \Omega$, $\varphi_n(\tau_i(\omega)) \to 0$ as $n\to +\infty$, $|\varphi_n (\tau_i(\omega))| \I g_r(\tau_i(\omega))$ and $\sum_i g_r(\tau_i(\omega)) |Y_i(\omega)|<+\infty$. By the dominated convergence theorem,
$$\scal{M(\omega)}{\varphi_n} =\sum_i \varphi_n(\tau_i(\omega)) Y_i(\omega) \to 0 \qquad \text{as} \ n\to +\infty \, .$$
Therefore, the linear functional $\varphi_n \mapsto \scal{M(\omega)}{\varphi_n}$ is continuous on $\Scd$, and so $M(\omega) \in \temd$ for a.a. $\omega \in \Omega$.

To prove \textit{(ii)}, we first prove that the Poisson integral on the right hand side of \eqref{def2} is well defined, and we will need the \textbf{PAM} condition. Let $\varphi \in \Scd$ and let $\Phi(t)=\int_{[t, +\infty[} \varphi(s) \dd s$. Then \eqref{def2} is well defined if 
\begin{equation}\label{condition2}
 \int_{|x|>\eta} \int_{\R_+^d} \lp \left |x\Phi(t)\right | \wedge 1\rp \dd t \, \nu( \! \dd x) <+\infty \, .
\end{equation}
Using \eqref{estimphi} in Lemma \ref{capitalphi} below, property \eqref{condition2} is established in the same way as \eqref{condition} and, as above, the right-hand side of \eqref{def2} defines almost surely a tempered distribution. Let $\varphi \in \Scd$. Then
\begin{align}\label{autre}
 \int_{\R_+^d}\int_{|x|>\eta}M( \! \dd t, \dd x) \int_{[t,+\infty[}\dd s \, \varphi(s)x
 =\sum_{i\s 1}\int_{\R_+^d}Y_i \mathds 1_{\tau_i\in [0,s]}\varphi(s) \dd s \, .
\end{align}
Following the argument in \eqref{measure}, we want to be able to use Fubini's theorem to exchange the sum and the integral in the last expression. For any $\alpha \in \N$, by the same argument as in the proof of Lemma \ref{capitalphi} below with $\beta=0$,
\begin{align*}
 \sup_{t\in \R_+^d}(1+|t|^\alpha)\int_{[t,+\infty[} |\varphi(s) |  \dd s \I C \m N_{|\alpha |+ 2d}(\varphi)\, .
\end{align*}
As in the proof of \eqref{condition}, we deduce that
\begin{equation*}\label{condition3}
 \int_{|x|>\eta} \int_{\R_+^d} \lp \left |x\int_{[t,+\infty[} |\varphi(s) | \dd s \right | \wedge 1\rp \dd t \, \nu( \! \dd x) <+\infty \, .
\end{equation*}
Then $\sum_{i\s 1}\int_{\R_+^d}|Y_i| \mathds 1_{\tau_i\in [0,s]}|\varphi(s)| \dd s<+\infty$, and by \eqref{autre} and Fubini's Theorem,
$$\int_{\R_+^d}\int_{|x|>\eta}M( \! \dd t, \dd x) \int_{[t,+\infty[}\dd s \varphi(s)x=\int_{\R_+^d}\sum_{i\s 1}Y_i \mathds 1_{\tau_i\in [0,s]}\varphi(s) \dd s = \int_{\R_+^d} P_s \varphi(s) \dd s \, .$$ 
This establishes \eqref{def2}. Property \textit{(iii)} now follows by replacing $\varphi$ by $\varphi^{(\mathbf 1_d)}$ in \eqref{def2}.
\end{proof}

\begin{lem}\label{capitalphi}
 For $\varphi \in \Scd$, let $\Phi$ be the function defined by $\Phi(t)=\int_{[t,+\infty)} \varphi(s) \dd s$. Let $p\in \N$, $\alpha, \beta \in \N^d$, such that $|\alpha|, |\beta |\I p$. Then for all $a\in \R^d$, there is $C=C(p,d,a)<+\infty$, such that, for all $\varphi \in \Scd$,
\begin{equation}\label{estimphi}
 \sup_{t\s a} \left |(1+ |t^\alpha|) \Phi^{(\beta)}(t) \right | \I  C' N_{p+2d}(\varphi) \, .
\end{equation}
\end{lem}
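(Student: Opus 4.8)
The plan is to write the mixed derivative $\Phi^{(\beta)}$ explicitly as an iterated integral over only part of the coordinates, and then to bound it directly using the rapid decay of $\varphi$. Split $\{1,\dots,d\}$ into $J=\{i:\beta_i\s 1\}$ and $I=\{i:\beta_i=0\}$. Differentiating under the integral sign --- legitimate by dominated convergence, since $\varphi$ and all its partial derivatives are rapidly decreasing --- each coordinate $j\in J$ contributes a factor $-1$ and is ``freed'' by the fundamental theorem of calculus, while each $i\in I$ is left as an integration, so that
\[
 \Phi^{(\beta)}(t)=(-1)^{|J|}\int_{\prod_{i\in I}[t_i,+\infty)} \varphi^{(\gamma)}\lp s \rp \prod_{i\in I}\dd s_i\, ,
\]
where in the integrand the coordinates $s_j$ with $j\in J$ are frozen at $s_j=t_j$, and $\gamma\in\N^d$ has $\gamma_j=\beta_j-1$ for $j\in J$ and $\gamma_i=0$ for $i\in I$; in particular $|\gamma|=|\beta|-|J|\I p$.

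Next I would absorb the factor $1+|t^\alpha|$ into the integrand. Because $t\s a$, for every coordinate one has $|t_i|\I\max\lp|a_i|,|s_i|\rp$ whenever $s_i\s t_i$ (if $t_i\s 0$ then $|t_i|=t_i\I s_i=|s_i|$; if $t_i<0$ then $|t_i|\I|a_i|$), and for $j\in J$ this is trivial since $s_j=t_j$. Hence, on the domain of integration, $|t_i|^{\alpha_i}\I|a_i|^{\alpha_i}+|s_i|^{\alpha_i}$, and expanding the product over $i$ gives $1+|t^\alpha|\I\sum_k c_{a,k}\,|s^{\delta_k}|$, a finite sum of monomials in $s$ with $|\delta_k|\I|\alpha|\I p$ and constants $c_{a,k}$ depending only on $a$ and $\alpha$.

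Finally, for each such monomial I would dominate $|s^{\delta_k}\varphi^{(\gamma)}(s)|$ by $\m N_{p+2d}(\varphi)\prod_{i\in I}(1+s_i^2)^{-1}$: multiplying by $\prod_{i\in I}(1+s_i^2)$ and expanding produces finitely many terms $|s^{\delta_k+2e_B}\varphi^{(\gamma)}(s)|$ with $B\subseteq I$ (here $e_B\in\N^d$ has $i$-th entry $\mathds 1_{\{i\in B\}}$), and $|\delta_k+2e_B|\I p+2|I|\I p+2d$ while $|\gamma|\I p$, so each term is $\I\m N_{p+2d}(\varphi)$. Since $\int_\R(1+s^2)^{-1}\dd s=\pi$, integrating over $\prod_{i\in I}[t_i,+\infty)\subseteq\R^{|I|}$ and summing over the finitely many monomials yields $(1+|t^\alpha|)\,|\Phi^{(\beta)}(t)|\I C'(p,d,a)\,\m N_{p+2d}(\varphi)$ uniformly for $t\s a$, which is \eqref{estimphi}.

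The delicate points are purely bookkeeping: getting the mixed integrate/differentiate formula for $\Phi^{(\beta)}$ right (and checking $|\gamma|\I p$), and verifying that exactly $2d$ extra powers --- two per integrated coordinate, to make each $\int(1+s_i^2)^{-1}\dd s_i$ converge --- are enough, so that the seminorm index is $p+2d$ and not larger. The hypothesis $t\s a$ is essential: $\Phi$ does not decay as some $t_i\to-\infty$ (it tends to a partial integral of $\varphi$), so without a lower bound on $t$ no estimate of the form \eqref{estimphi} can hold; the restriction is precisely what makes the elementary bound $|t_i|\I\max(|a_i|,|s_i|)$ available and forces the constant to depend on $a$.
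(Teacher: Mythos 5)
Your proof is correct, and it reaches the right seminorm index $p+2d$ for the right reason (two extra powers per integrated coordinate, so that $\int_\R(1+s_i^2)^{-1}\dd s_i$ converges). The route differs from the paper's in one structural choice. The paper first substitutes $s\mapsto s+t$ to write $\Phi(t)=\int_{\R_+^d}\varphi(s+t)\dd s$, so that differentiation under the integral gives $\Phi^{(\beta)}(t)=\int_{\R_+^d}\varphi^{(\beta)}(s+t)\dd s$ with no boundary terms and no case distinction on which components of $\beta$ vanish; it then multiplies and divides the integrand by $1+\big|(t+s)^{\alpha+\mathbf 2_d}\big|$, bounds the numerator by $\m N_{p+2d}(\varphi)$, and absorbs the weight $(1+|t^\alpha|)$ into the remaining integral $\int_{\R_+^d}\lp 1+|(t+s)^{\alpha+\mathbf 2_d}|\rp^{-1}\dd s$, whose uniform boundedness for $t\s a$ is asserted rather than spelled out. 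You instead differentiate in the original variables, which turns each coordinate with $\beta_j\s 1$ into a point evaluation at $t_j$ (via the fundamental theorem of calculus, with $\gamma_j=\beta_j-1$, so $|\gamma|\I p$ as you check) and leaves a lower-dimensional integral over the coordinates in $I$. What your version buys is that the final convergence step is completely explicit and elementary: the weight is handled by $|t_i|\I\max(|a_i|,|s_i|)$ and the integral by the product of one-dimensional integrals $\int_\R(1+s_i^2)^{-1}\dd s_i=\pi$, with all constants visibly depending only on $p$, $d$, $a$. What the paper's substitution buys is brevity: the mixed derivative formula and the bookkeeping over $J$ and $I$ disappear entirely. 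Both arguments are sound; yours fills in the uniform-boundedness step that the paper leaves implicit. (Minor point, already present in the statement itself: the constant is called $C$ in the text and $C'$ in the display, and the seminorm should be $\m N_{p+2d}$; this does not affect your argument.)
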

\begin{proof}
Let $t\in \R^d$. Then $\Phi(t)= \int_{\R^d_+} \varphi (s+t)  \dd s$, so $\Phi^{(\beta)}(t)=  \int_{\R^d_+} \varphi^{(\beta)}(s+t) \dd s$. Therefore,
\begin{align*}
 \left |(1+ |t^\alpha|) \Phi^{(\beta)}(t) \right | \I (1+|t^\alpha |) \int_{\R^d_+} \left | \varphi^{(\beta) }(s+t) \right | \dd s &= (1+|t^\alpha |) \int_{\R^d_+} \frac{\left | \varphi^{(\beta) }(s+t) \right | \lp 1+\left | (t+s)^{\alpha+\mathbf 2_d} \right | \rp}{1+\left | (t+s)^{\alpha+\mathbf 2_d} \right |  } \dd s \\
 &\I \m N_{p+2d}(\varphi)(1+|t^\alpha |) \int_{\R_+^d} \frac{1}{1+\left | (t+s)^{\alpha+\mathbf 2_d} \right | } \dd s \\
 & \I C N_{p+2d}(\varphi)
\end{align*}
for $t\s a$, where $C$ is a constant depending only on $p$, $d$ and $a$. 
\end{proof}

\subsection{Multidimensional L\'evy white noise: the general case}

\indent\indent
The following lemma extends to $d$-parameter L\'evy fields the property recalled in Remark \ref{equiv}.

\begin{lem}\label{timedep}
 Let $X$ be a $d$-parameter L\'evy field with characteristic triplet $(\gamma, \sigma, \nu)$ and let $\alpha>0$. The following are equivalent:
\textit{(i)} $\forall t\in \R_+^d, \ \E\lp |X_t|^\alpha\rp<+\infty$; \ \ \textit{(ii)} $\exists t\in \lp \R_+\backslash\{0\}\rp^d : \E\lp |X_t|^\alpha\rp<+\infty $; \ \ \textit{(iii)} $\displaystyle \int_{|x|>1}|x|^\alpha \nu( \! \dd x)<+\infty$\,.
\end{lem}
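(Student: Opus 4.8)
The plan is to reduce the statement to the classical characterization of absolute moments of an infinitely divisible random variable in terms of its Lévy measure, exactly as was recalled in Remark \ref{equiv} for $d=1$: by \cite[Theorem 25.3]{sato}, an infinitely divisible random variable $Y$ with Lévy measure $\rho$ satisfies $\E\lp |Y|^\alpha \rp <+\infty$ if and only if $\int_{|x|>1}|x|^\alpha \rho(\! \dd x)<+\infty$. The only extra ingredient is the identification, already made in the paragraph preceding Theorem \ref{lid}, of the characteristic triplet of $X_t$ as $(\gamma_t,\sigma_t,\nu_t)=(\gamma,\sigma,\nu)\,\leb{d}{[0,t]}$; in particular the Lévy measure of $X_t$ is $\nu_t=\leb{d}{[0,t]}\,\nu$.

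With this in hand I would argue around the cycle \textit{(iii)} $\Rightarrow$ \textit{(i)} $\Rightarrow$ \textit{(ii)} $\Rightarrow$ \textit{(iii)}. For \textit{(iii)} $\Rightarrow$ \textit{(i)}, fix any $t\in\R_+^d$ and note $\leb{d}{[0,t]}=\prod_{i=1}^d t_i<+\infty$, so
$$ \int_{|x|>1}|x|^\alpha \nu_t(\! \dd x)=\leb{d}{[0,t]}\int_{|x|>1}|x|^\alpha \nu(\! \dd x)<+\infty\, , $$
and \cite[Theorem 25.3]{sato} applied to $X_t$ gives $\E\lp |X_t|^\alpha\rp<+\infty$. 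The implication \textit{(i)} $\Rightarrow$ \textit{(ii)} is immediate (take $t=\mathbf 1_d$). For \textit{(ii)} $\Rightarrow$ \textit{(iii)}, let $t\in\lp\R_+\backslash\{0\}\rp^d$ with $\E\lp |X_t|^\alpha\rp<+\infty$; then $c:=\leb{d}{[0,t]}=\prod_{i=1}^d t_i\in\,]0,+\infty[$ since every coordinate of $t$ is strictly positive. Applying \cite[Theorem 25.3]{sato} to the infinitely divisible variable $X_t$, whose Lévy measure is $c\,\nu$, yields $c\int_{|x|>1}|x|^\alpha \nu(\! \dd x)<+\infty$, and dividing by $c>0$ gives \textit{(iii)}.

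There is no real analytic difficulty here; the only points that need care are correctly quoting \cite[Theorem 25.3]{sato} for a general exponent $\alpha>0$ (handled by noting that $x\mapsto(1+|x|)^\alpha$ is submultiplicative, so that having a finite $\alpha$-th absolute moment is equivalent to the corresponding integrability of the Lévy measure on $\{|x|>1\}$), and observing that the multiplicative factor $\leb{d}{[0,t]}$ is finite and strictly positive precisely when $t\in\lp\R_+\backslash\{0\}\rp^d$, which is exactly what forces \textit{(i)}, \textit{(ii)} and \textit{(iii)} to be equivalent. It is worth remarking in passing that if some coordinate $t_i$ vanishes then $\leb{d}{[0,t]}=0$, so $\nu_t=0$, hence $X_t=0$ a.s.\ and $\E\lp|X_t|^\alpha\rp=0$, in agreement with \textit{(i)}.
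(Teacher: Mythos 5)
Your proof is correct and rests on the same two ingredients as the paper's own argument: the identification $(\gamma_t,\sigma_t,\nu_t)=(\gamma,\sigma,\nu)\,\leb{d}{[0,t]}$ and the moment criterion of \cite[Theorem 25.3]{sato}. The only (immaterial) difference is that you apply that criterion directly to the infinitely divisible variable $X_t$, whereas the paper applies it to the one-parameter L\'evy process $X^{i,t}$ obtained by freezing all but one coordinate; both routes are equally valid, and your aside on the submultiplicativity of $(1+|x|)^\alpha$ makes explicit a point the paper leaves implicit.
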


\begin{proof}
 Clearly, $\textit{(i)}$ implies $\textit{(ii)}$. Suppose that $\textit{(ii)}$ is true for some $t$ in $\lp \R_+\backslash \{0\}\rp^d$. By a previous discussion, the process $X^{i,t}$ obtained by fixing all coordinates of the parameter $t$ except the $i$-th is again a L\'evy process with characteristic triplet $(\gamma ,\sigma, \nu)\prod_{j\neq i}t_j$. By an application of \cite[Theorem 25.3]{sato} we deduce that $\prod_{j\neq i}t_i \int_{|x|>1}|x|^\alpha \nu( \! \dd x)<+\infty$ and then $\textit{(iii)}$ is verified. Suppose now that $\textit{(iii)}$ is true. Let $t\in \R_+^d$, and $1\I i\I d$. Since $\prod_{j\neq i}t_i \int_{|x|>1}|x|^\alpha \nu( \! \dd x)<+\infty$, another application \cite[Theorem 25.3]{sato} gives us $ \E\lp \left | X^{i,t}_s \right |^\alpha \rp <+\infty$ for all $s\in \R_+$. Since $i$ and $t$ are taken arbitrarily, we deduce $\textit{(i)}$.
\end{proof}

We need a technical lemma that essentially states that for a compound Poisson sheet $X^P$, there is a well-chosen sequence $\lp \varphi_n\rp_{n\s 1}$ of test-functions with suitably decreasing compact support such that $X^P$ is constant on $\text{supp}\, ( \varphi_n)$ for $n$ large enough (this was established in dimension one during the proof of Proposition \ref{growth}).
\begin{lem}\label{sequence}
 Let $X^P$ be a $d$-parameter L\'evy field with jump measure $J_X$ and characteristic triplet $(0, 0, \mathds 1_{|x| \s 1}\nu)$, where $\lambda :=\int_{|x| \s 1} \nu(\! \dd x) <+\infty$. Let $L$ be the compound Poisson process defined by $L_t=X^P_{(\mathbf 1_{d-1}, t)}$, and let $(S_n)_{n\s 1}$ denote its sequence of jump times. Then for all $p\in \N$, there exists a finite non random constant $C_p$ with the following property: for all $\omega \in \Omega$, there exists a sequence $\lp \varphi_n\rp_{n\s 1}$ of functions (depending on $\omega$) in $\m D(\R^d)$ such that
 \begin{equation}\label{grando}
 \mathcal N_p(\varphi_n)\mathds 1_{S_n\s 1} \I C_p S_n^{3d+4p} \mathds 1_{S_n\s 1} \, ,
\end{equation}
and there exists an event $\Omega'$ such that $\PR(\Omega')=1$ and for all $\omega \in \Omega'$, there exists an integer $N(\omega)$ such that, for all $n\s N(\omega)$, $X^P$ is constant on the support of $\varphi_n$ and
\begin{equation}\label{lsn}
 \scal{X^P}{\varphi_n}(\omega)=L_{S_n}(\omega) \, .
\end{equation}
\end{lem}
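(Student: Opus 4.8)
The plan is to mimic the one-dimensional construction from the proof of Proposition \ref{growth}, but carried out in $\R^d$ along the last coordinate axis. First I would fix $\omega$ and recall that, since $X^P$ has jump measure $J_X = \sum_{i\geq 1}\delta_{\tau_i}\delta_{Y_i}$ with finitely many points in any bounded region, the sample path $t\mapsto X^P_t$ is a finite sum of indicators of boxes $[\tau_i,+\infty)\cap\R_+^d$ scaled by $Y_i$; in particular, along the line $\{(\mathbf 1_{d-1},t):t\geq 0\}$ the process $L_t = X^P_{(\mathbf 1_{d-1},t)}$ is an ordinary compound Poisson process whose jumps occur at the times $S_n$ at which some $\tau_i$ has all of its first $d-1$ coordinates $\leq 1$ and its $d$-th coordinate equal to that jump time. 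The key point is that the $S_n$ form a subset of a Poisson process on $\R_+$, so they still have the property that $\sum_n\E(S_n^{-2})<\infty$ (the argument via the Laplace transform and \eqref{gammaint} goes through verbatim, since the relevant jump times are again Gamma-distributed, or more simply $S_n$ stochastically dominates the $n$-th point of a genuine Poisson process on $\R_+$).

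Next I would build the test functions. Fix $\psi\in\m D(\R)$ with $\psi\geq 0$, $\int_\R\psi = 1$ and $\mathrm{supp}\,\psi\subset[0,1]$, and fix $\chi\in\m D(\R)$ with $\chi\geq 0$, $\mathrm{supp}\,\chi\subset[1-\delta,1+\delta]$ for a small $\delta>0$ and $\int_\R\chi=1$; then set
\begin{equation*}
 \varphi_n(t_1,\dots,t_d) = \chi(t_1)\cdots\chi(t_{d-1})\, S_n^{k}\,\psi\bigl((t_d - S_n)S_n^{k}\bigr)
\end{equation*}
for an exponent $k$ to be chosen (as in dimension one, $k=2$ will work). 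The support of $\varphi_n$ is contained in $[1-\delta,1+\delta]^{d-1}\times[S_n,S_n+S_n^{-k}]$. On the event $A_{n,k}$ that $X^P$ has no jump whose $d$-th coordinate lies in $]S_n,S_n+S_n^{-k}[$ and whose first $d-1$ coordinates lie in $[1-\delta,1+\delta]^{d-1}$, the field $X^P$ is constant on $\mathrm{supp}\,\varphi_n$, equal to $L_{S_n}$ once $\delta$ is small enough that no extra jumps are caught by enlarging the line to this thin slab; since $\int_{\R^d}\varphi_n = 1$, this gives \eqref{lsn}. A Borel--Cantelli argument identical to the one in Proposition \ref{growth} — using $\PR(A_{n,k}^c)\leq C\,\E(S_n^{-k})$ for the slab, the independence of the jump times from $S_n$, and $\sum_n\E(S_n^{-2})<\infty$ with $k=2$ — produces the almost-sure event $\Omega'$ and the integer $N(\omega)$.

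For the norm bound \eqref{grando}, I would compute $\m N_p(\varphi_n)$ directly: each factor $\chi(t_i)$ and its derivatives are bounded by a constant independent of $n$ and supported in a fixed compact set, while the factor $S_n^k\psi((t_d-S_n)S_n^k)$ contributes, exactly as in \eqref{np2}, a factor $\bigl(S_n+S_n^{-k}\bigr)^{p}S_n^{k(p+1)}\sup_x|\psi^{(\beta_d)}(x)|$; multiplying out and using $S_n\geq 1$ on the relevant event gives $\m N_p(\varphi_n)\mathds 1_{S_n\geq 1}\leq C_p S_n^{(p+1)k+p}\mathds 1_{S_n\geq 1}$, and with $k=2$ the exponent is $3p+2\leq 3d+4p$, which is even better than claimed; in any case one only needs \emph{some} polynomial bound, so a crude estimate suffices. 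The main obstacle — and the only place where the multiparameter setting genuinely differs from dimension one — is the geometric bookkeeping in the second paragraph: one must check that restricting $X^P$ to a sufficiently thin slab $[1-\delta,1+\delta]^{d-1}\times\R_+$ around the line picks up exactly the jumps of $L$ and no others, for $n$ large, which requires that almost surely the (locally finite) point configuration $\{\tau_i\}$ has no point with first $d-1$ coordinates in $[1-\delta,1+\delta]^{d-1}\setminus\{\mathbf 1_{d-1}\}$ accumulating onto the line; this follows because $J_X$ has no points with a coordinate equal to any fixed value (the intensity $\mathrm{Leb}_d\times\nu$ is non-atomic in the time variable) and, on each bounded region, finitely many points, so shrinking $\delta$ along the countably many times $S_n\leq$ any bound removes all spurious jumps — and one absorbs the remaining asymptotic issue into the almost-sure event $\Omega'$ and the threshold $N(\omega)$.
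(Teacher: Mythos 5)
There is a genuine gap, and it sits exactly at the point you yourself flag as ``the main obstacle.'' Since $X^P_t=\sum_i Y_i\mathds 1_{\tau_i\I t}$ with $\I$ the componentwise partial order, $X^P$ is constant on a box $[a,b]$ only if no jump point $\tau_i$ lies in $[0,b]\setminus[0,a]$. For your box $[1-\delta,1+\delta]^{d-1}\times[S_n,S_n+S_n^{-k}]$ this difference set is much larger than the thin slab you condition on: it contains, for instance, all points with $x_1\in\,]1-\delta,1+\delta]$, $x_j\I 1-\delta$ for $2\I j\I d-1$, and $x_d\I S_n$ --- a region of Lebesgue measure of order $\delta S_n$, unbounded in the $d$-th coordinate. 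A jump there separates the values of $X^P$ at comparable points of your box even though it lies nowhere near the support in the $t_d$ direction. For any fixed $\delta>0$, the set $\{1-\delta<x_1\I 1+\delta,\ x_j\I 1-\delta,\ x_d<\infty\}$ has infinite Lebesgue measure, hence almost surely contains a point of the Poisson configuration; since these bad regions increase with $n$, almost surely constancy of $X^P$ on $\mathrm{supp}\,\varphi_n$ fails for \emph{all} sufficiently large $n$ (when $d\s 2$). Local finiteness of $\{\tau_i\}$ cannot rescue this, because the offending region is unbounded; and ``shrinking $\delta$ along the countably many $n$'' is not carried out and, if done, would reintroduce $n$-dependent factors $\delta_n^{-|\beta|}$ into $\m N_p(\varphi_n)$ that you would then have to control.

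The paper's construction avoids this precisely by letting the width of the box in the first $d-1$ coordinates shrink with $n$: the support of $\varphi_n$ is the box with lower corner $(\mathbf 1_{d-1},S_n)$ and side lengths $S_n^{-3}$, so the ``side'' part $J^1_{n,k}$ of the difference set has measure $O(S_n\cdot S_n^{-k})$, and one must take $k=3$ (not $k=2$, as you propose) so that $\E(S_n^{-(k-1)})=\E(S_n^{-2})$ is summable and Borel--Cantelli applies; the scaling $S_n^{3}$ per coordinate is also what produces the exponent $3d+4p$ in \eqref{grando}, rather than the smaller exponent your fixed-$\delta$ computation suggests. If you replace your fixed $\delta$ by $S_n^{-3}$ and redo both the measure estimate and the norm estimate, your argument essentially becomes the paper's; as written, your event $A_{n,k}$ does not imply that $X^P$ is constant on $\mathrm{supp}\,\varphi_n$, so \eqref{lsn} is not established.
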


\begin{proof}
 As in the proof of Proposition \ref{growth}, we will construct a sequence $\lp \varphi_n\rp_{n\s 1}$ of functions with suitably decreasing compact support, and then use a Borel-Cantelli argument to show that $X^P$ is constant on this support. Let $\varphi \in \mathcal D(\R^d)$ with $\text{supp} \, \varphi \subset [0,\mathbf 1_{d}]$ and $\int_{\R^d}\varphi=1$. Similar to \eqref{phin}, the sequence $\lp\varphi_n\rp_{n\s 1}$ is defined by
$$ \varphi_n(t)=S_n^{3d}\varphi\lp (t_1-1)S_n^3,...,(t_{d-1}-1)S_n^3, (t_d-S_n)S_n^3 \rp\, , \ \ \ t\in \R^d \, ,$$
so that $\text{supp} \, \varphi_n \subset \left [ \lp \mathbf 1_{d-1} ,S_n\rp, \lp 1+\frac 1 {S_n^3},...,1+\frac 1 {S_n^3}, S_n+\frac 1 {S_n^3}\rp \right ]$ and $\int_{\R^d} \varphi_n=1$. Let $p\in \N$. Then
\begin{align*}
 \mathcal N_p(\varphi_n)\mathds 1_{S_n\s 1}&=\sum_{|\alpha|,|\beta| \I p} \sup_{t\in \R^d}\left | t^\alpha \varphi_n^{(\beta)}(t)\right | \mathds 1_{S_n\s 1} \\
 &=\sum_{|\alpha|,|\beta| \I p} \sup_{t\in \left [ 0, (2,...,2, S_n+1) \right ]} t ^\alpha \left |\varphi_n^{(\beta)}(t)\right | \mathds 1_{S_n\s 1} \\
 &\I \sum_{|\alpha|,|\beta| \I p} 2^{\sum_{i=1}^{d-1} \alpha_i} \lp S_n+1 \rp^{\alpha_d}\sup_{t\in \R^d} \left |\varphi_n^{(\beta)}(t)\right | \mathds 1_{S_n\s 1} \\ 
 &\I \sum_{|\alpha|,|\beta| \I p} 2^{\sum_{i=1}^{d-1} \alpha_i} \lp S_n+1 \rp^{\alpha_d} S_n^{3\lp d + \sum_{i=1}^d \beta_i \rp} \mathcal N_p (\varphi) \mathds 1_{S_n\s 1}\\ 
&\I C_p' \mathcal N_p(\varphi) S_n^{3d+4p}\mathds 1_{S_n\s 1} \, ,
\end{align*}
for some finite non random constant $C'_p$. Therefore \eqref{grando} holds and $C_p:=C'_p \m N_p(\varphi)$ depends only on $\varphi$ and $p$. Let 
$$I_{n,k}=\left ] \lp\mathbf 1_{d-1} ,S_n\rp, \lp1+\frac 1 {S_n^k},...,1+\frac 1 {S_n^k}, S_n+\frac 1 {S_n^k}\rp \right [ \, ,$$
and let $A_{n,k}$ be the event ``$X^P$ is constant in the box $I_{n,k}$''. Clearly, \eqref{lsn} holds on $A_{n,k}$. 

   Observe that
\begin{align*}
 \PR(A_{n,k}^c)&= \PR \left \{ X^P \ \text{has at least one jump time in the set} \ J_{n,k} \right \} \\
 &=\PR \left \{ J_{X^P}\lp \lp\R\backslash [-1,1]\rp \times J_{n,k} \rp  \s 1 \right \} \, ,
\end{align*}
where $J_{n,k}$ is defined as the following set:
\begin{align*}
 J_{n,k} &=\left [ \mathbf 0_{d} , \lp 1+\frac 1 {S_n^k},...,1+\frac 1 {S_n^k}, S_n+\frac 1 {S_n^k}\rp \right [ \backslash \left [ \mathbf 0_d , \lp \mathbf 1_{d-1},S_n\rp \right ]=J_{n,k}^1 \cup J_{n,k}^2 \, ,
\end{align*}
where $J_{n,k}^1$ and $J_{n,k}^2$ are disjoint sets defined by
\begin{align*}
 J_{n,k}^1 &=\left \{ x\in \R_+^d : \forall \, 1\I i\I d-1, \, x_i<1+\frac 1 {S_n^k}, \ x_d\I S_n, \ \text{and} \ \exists i_0 \in \left\{1,...,d-1\right \} \ \text{s.t.} \ x_{i_0}>1 \right\}\, , \\
J_{n,k}^2 &=\left ] \lp \mathbf 0_{d-1},S_n \rp ,\lp 1+\frac 1 {S_n^k},...,1+\frac 1 {S_n^k},S_n+\frac 1 {S_n^k} \rp \right [\, .
\end{align*}

 Therefore we can write
\begin{equation}\label{dec1}
\begin{aligned}
   \PR(A_{n,k}^c)&=\PR \left \{ J_{X^P}\lp \lp\R\backslash  [-1,1]\rp \times J_{n,k}^1 \rp+J_{X^P}\lp \lp\R\backslash [-1,1]\rp \times J_{n,k}^2 \rp  \s 1 \right \} \\
  &\I \PR \left \{ J_{X^P}\lp \lp\R\backslash [-1,1]\rp \times J_{n,k}^1 \rp \s 1 \right \} +\PR \left \{ J_{X^P}\lp \lp\R\backslash  [-1,1]\rp \times J_{n,k}^2 \rp\s 1 \right \} \, .
\end{aligned}
\end{equation}
Let $\mathcal F_{(\mathbf 1_{d-1}, t)}=\sigma \lp X_s, \ s\in \left [\mathbf 0_{d}, \lp \mathbf 1_{d-1}, t\rp \right ] \rp$ and $\mathcal F_{(\mathbf 1_{d-1}, \infty)}=\bigvee_{t\in \R_+} \mathcal F_{(\mathbf 1_{d-1}, t)}$. We also write $H_1=\left \{ x\in \R_+^d : x_1\I 1,..., x_{d-1}\I 1\right \}$. Then due to the independence of the increments of $X^P$, the family of random variables $\lp J_{X^P}\lp \lp\R\backslash  [-1,1]\rp \times A \rp \rp_{A\subset \R_+^d \backslash H_1}$ is independent of $\mathcal F_{(\mathbf 1_{d-1}, \infty)}$. Since $S_n$ is $\mathcal F_{(\mathbf 1_{d-1}, \infty)}$-measurable, we deduce that conditionally on $S_n$, the random variable $J_{X^P}\! \lp \lp\R\backslash [-1,1]\rp \times J_{n,k}^1 \rp$ has a Poisson law with parameter $\lambda\leb{d}{J_{n,k}^1}$, where $\lambda :=\int_{|x|>1}\nu(d x)$. Further, on the event $\left \{ S_n\s 1\right \}$,
\begin{align*}
 \leb{d}{J_{n,k}^1} &= \sum_{j=1}^{d-1} \binom{d-1}{j} S_n \lp \frac 1 {S_n^k} \rp^j \lp 1+\frac 1 {S_n^k}\rp ^{d-1-j} 
 \I 3^{d-1}  S_n^{-(k-1)}\, .
\end{align*}
Indeed, the Lebesgue measure of a subset of $J_{n,k}^1$ of vectors with exactly $j$ components strictly greater than one is $S_n \lp \frac 1 {S_n^k} \rp^j \lp 1+\frac 1 {S_n^k}\rp ^{d-1-j}$, and there are $\binom{d-1}{j}$ such subsets. We deduce that
\begin{align}\label{dec6}
 \PR \left \{ J_{X^P}\lp \lp\R\backslash[-1,1]\rp \times J_{n,k}^1 \rp \s 1 \right \} &\I \PR\left \{S_n\I 1\right \} + \E \lp \mathds 1_{S_n>1}\lp 1-e^{-\lambda\leb{d}{J_{n,k}^1}} \rp \rp \nonumber \\
 &\I \PR\left \{S_n\I 1\right \} +\lambda 3^{d-1} \E\lp  S_n^{-(k-1)} \rp \, .
\end{align}
We also define a process $\tilde L_t=X^P_{(\mathbf 2_{d-1},t)}$. It is a L\'evy process with L\'evy measure $\mu( \! \dd x)=2^{d-1}\mathds 1_{|x|>1}\nu( \! \dd x)$. Since $X^P$ is piecewise constant, $\tilde L$ is a piecewise constant L\'evy process, therefore a compound Poisson process (see \cite[Theorem 21.2]{sato}). On the event $\left \{S_n >1 \right \}$, we have $J_{n,k}^2 \subset [(\mathbf 0_{d-1}, S_n) , (\mathbf 2_{d-1}, S_n+ S_n^{-k})]$. Therefore if $X^P$ has a jump point in $J_{n,k}^2$ then $\tilde L$ has a jump in $\left ] S_n, S_n+S_n^{-k}\right [$.  Let $\mathcal G_t=\sigma\lp X_u : u\in [0, (\mathbf 2_{d-1},t] \rp$. Then $S_n$ is a $\mathcal G$-stopping time and $\tilde L$ is a L\'evy process adapted to the filtration $\mathcal G$, so by the strong Markov property, the number of jumps of the process $(\hat L_t)_{t\s 0}=\lp \tilde L_{t+S_n}-\tilde L_{S_n}\rp_{t\s 0}$ is independent of $S_n$ and has Poisson distribution of parameter $2^{d-1}\lambda t$. Therefore we can write
\begin{align}\label{dec2}
 \PR \left \{ J_{X^P}\lp \lp\R\backslash [-1,1] \rp \times J_{n,k}^2 \rp\s 1 \right \} &\I \PR \left \{ S_n \I 1  \right \}+ \PR \lp \left \{ J_{X^P}\lp \lp\R\backslash [-1,1] \rp \times J_{n,k}^2 \rp\s 1 \right \} \cap \left \{ S_n >1 \right \}\rp \nonumber\\
 &\I \PR \left \{ S_n \I 1  \right \}  + \PR \left \{ \tilde L \ \text{has a jump in} \ \lp S_n, S_n + \frac 1 {S_n^k} \rp \right \}   \nonumber \\
 & = \PR\left \{ S_n \I 1  \right \} + \PR \left \{ \hat L \ \text{has a jump in} \ \lp 0, \frac 1 {S_n^k} \rp \right \}   \nonumber \\
 &= \PR \left \{ S_n \I 1  \right \}+ \E\lp 1- \exp \left [-\frac{2^{d-1}\lambda}{S_n^k}\right ]  \rp  \nonumber \\
 & \I \PR \left \{ S_n \I 1  \right \}+ \E\lp \frac{2^{d-1}\lambda}{S_n^k} \rp  \, .
\end{align}
Using the density of the Gamma distribution, we see that
\begin{equation}\label{dec4}
 \PR\left \{ S_n \I 1  \right \}= \int_0^1 \frac{\lambda^n}{(n-1)!}e^{-\lambda x}x^{n-1} \dd x \I \frac{\lambda^n}{(n-1)!} \, .
\end{equation}
Integrating the Laplace transform of $S_n$ as in \eqref{gammaint}, for $n\s 4$, we see that
\begin{equation}\label{gammaintd}
  \E\lp   S_{n}^{-3}\rp = \frac{\lambda^3}{(n-1)(n-2)(n-3)} \qquad \text{and} \qquad \E\lp  S_{n}^{-2}\rp = \frac{\lambda^2}{(n-1)(n-2)} \, .
\end{equation}
Then we get from \eqref{dec1},\eqref{dec6}, \eqref{dec2} with $k=3$, \eqref{dec4} and \eqref{gammaintd}, that for $n\s 4$:
\begin{align*}
  \PR\lp A_{n,3}^c\rp & \I \frac{2\lambda^n}{(n-1)!} +\lambda 3^{d-1} \E\lp \frac 1 {S_n^2} \rp +  \lambda 2^{d-1}\E\lp \frac {1} { S_{n}^3} \rp  \\
  &=\frac{2\lambda^n}{(n-1)!} + \frac{\lambda^2 3^{d-1}}{(n-1)(n-2)}+  \frac{\lambda^3 2^{d-1}}{(n-1)(n-2)(n-3)}  \, ,
\end{align*}
and we deduce that $\sum_{n\s 1}\PR \lp A_{n,3}^c\rp <\infty$. By the Borel-Cantelli Lemma, 
\begin{equation}
 \PR\lp \underset{n\to +\infty}{\lim\sup} A_{n,3}^c \rp=0 \, ,
\end{equation}
and the set $\Omega'=\underset{n\to +\infty}{\lim\inf} A_{n,3}$ has probability one. This completes the proof.
\end{proof}

We now return to the question of whether or not a L\'evy white noise is a tempered distribution. Similar to \eqref{omega_set_dot}, for any $d$-dimensional L\'evy noise $\dot X$, we define  the set $\Omega_{\dot X}$ by
\begin{align}\label{omega_set_dotd}
 \Omega_{\dot X}= \left \{ \omega \in \Omega : \dot X(\omega) \in \temd \right \} \, ,
\end{align}
 and we have the following characterization.
\begin{theo}\label{temperedd}
 Let $X$ be a $d$-parameter L\'evy field with jump measure $J_X$ and characteristic triplet $(\gamma, \sigma, \nu)$ and $\dot X$ the associated L\'evy white noise. Then the following holds for the set $\Omega_{\dot X}$ defined as in \eqref{omega_set_dotd}:
\begin{itemize}
 \item[(i)] If there exists $\eta>0$ such that $\E\lp |X_{\mathbf 1_d}|^\eta\rp<+\infty$, then $\PR\lp \Omega_{\dot X}\rp=1$.
 \item[(ii)] If for all $\eta>0, \ \E\lp |X_{\mathbf 1_d}|^\eta\rp=+\infty$, then $\PR\lp \Omega_{\dot X}\rp=0$.
\end{itemize}
\end{theo}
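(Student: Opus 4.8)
The plan is to mirror the structure of the one-dimensional argument (Theorem~\ref{tempered}), using the $d$-dimensional tools assembled above: Corollary~\ref{martingalecord} for the smooth and small-jump parts, Lemma~\ref{pnoise} for the compound Poisson sheet with a \textbf{PAM}, and Lemma~\ref{sequence} for the converse direction. Throughout I use the L\'evy-It\^o decomposition $X_t=\gamma\,\leb{d}{[0,t]}+\sigma W_t+X^P_t+X^M_t$ and write $Y_t=\gamma\,\leb{d}{[0,t]}+\sigma W_t+X^M_t$, so that $X=Y+X^P$ and $\dot X=\dot Y+\dot{X^P}$ in $\m D'(\R^d)$.

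For part \textit{(i)}, assume $\E(|X_{\mathbf 1_d}|^\eta)<+\infty$ for some $\eta>0$. By Lemma~\ref{timedep} this is equivalent to $\int_{|x|>1}|x|^\eta\,\nu(\!\dd x)<+\infty$, i.e.~to the \textbf{PAM} condition, so the hypothesis of Lemma~\ref{pnoise} holds with $\m R \backslash[-1,1]$ in place of $\{|x|>\eta\}$. By Corollary~\ref{martingalecord}, $\PR(\Omega_Y)=1$; since differentiation maps $\temd$ to itself, on $\Omega_Y$ the noise $\dot Y$ is a tempered distribution. By Lemma~\ref{pnoise}\textit{(i)} (applied to the jump measure $M=J_{X^P}$ of the large-jump part), $M\in\temd$ almost surely, and $M=\dot{X^P}$ in $\temd$: indeed, $\dot{X^P}$ is by Definition~\ref{levy_noise} the $d$-th cross-derivative of the compound Poisson sheet $X^P=P$, and Lemma~\ref{pnoise}\textit{(iii)} states precisely $M=P^{(\mathbf 1_d)}$. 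Hence $\dot{X^P}=M\in\temd$ a.s., and adding the two pieces, $\dot X=\dot Y+\dot{X^P}\in\temd$ almost surely; thus $\PR(\Omega_{\dot X})=1$.

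For part \textit{(ii)}, assume $\E(|X_{\mathbf 1_d}|^\eta)=+\infty$ for all $\eta>0$; by Lemma~\ref{timedep} this says $\int_{|x|>1}|x|^\eta\nu(\!\dd x)=+\infty$ for all $\eta>0$, so the jump heights $(Y_i)$ of $X^P$ satisfy $\E(|Y_1|^\eta)=+\infty$ for all $\eta>0$. The strategy is to show $\Omega_{\dot X}\subset\Omega_{X^P}$ up to a null set and then invoke the one-dimensional result. Fix $\omega\in\Omega_{\dot X}$. Since $\dot Y(\omega)\in\temd$ on the a.s.~set $\Omega_Y$ (Corollary~\ref{martingalecord}) and $\dot X=\dot Y+\dot{X^P}$, we get $\dot{X^P}(\omega)\in\temd$ on $\Omega_{\dot X}\cap\Omega_Y$. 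Now apply Lemma~\ref{sequence} to $X^P$: on the a.s.~set $\Omega'$ there is, for every $\omega$, a sequence $(\varphi_n)\subset\m D(\R^d)$ with $\m N_p(\varphi_n)\mathds 1_{S_n\s1}\I C_p S_n^{3d+4p}\mathds 1_{S_n\s1}$ and $\scal{X^P}{\varphi_n}(\omega)=L_{S_n}(\omega)$ for $n\s N(\omega)$, where $L_t=X^P_{(\mathbf 1_{d-1},t)}$ is a compound Poisson process whose jump heights also have no \textbf{PAM}. Testing $\dot{X^P}(\omega)$ against the antiderivatives of the $\varphi_n$ (or, more directly, using the representation of $\dot{X^P}$ as the jump measure $M$ so that $\scal M {\Psi_n}$ recovers $L_{S_n}$, where $\Psi_n$ is built from $\varphi_n$ as in Lemma~\ref{pnoise}) and the tempered bound $|\scal{\dot{X^P}(\omega)}{\cdot}|\I C(\omega)\m N_{p(\omega)}(\cdot)$, we obtain $|L_{S_n}(\omega)|\I C'(\omega)S_n^{q(\omega)}$ for all large $n$ and some deterministic exponent $q$ depending only on $p(\omega)$ and $d$; since $L$ is piecewise constant and $k^{-1}S_k\to\lambda^{-1}$ a.s., this forces $L$ to be slowly growing on $\Omega_{\dot X}\cap\Omega_Y\cap\Omega'\cap\{S_n\to+\infty\}$. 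But by Proposition~\ref{longterm}\textit{(ii)} applied to $L$ (whose jump heights have no \textbf{PAM}), $L$ is a.s.~\emph{not} slowly growing. Hence $\PR(\Omega_{\dot X})=\PR(\Omega_{\dot X}\cap\Omega_Y)=0$.

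The main obstacle is the converse step in \textit{(ii)}: recovering the value $L_{S_n}$ of the one-parameter trace process from $\langle\dot{X^P},\cdot\rangle$ using only test functions with polynomially-bounded Schwartz norms. This is exactly what Lemma~\ref{sequence} is engineered to supply --- the delicate points being that $X^P$ is constant on $\mathrm{supp}\,\varphi_n$ for large $n$ (a Borel-Cantelli estimate splitting the relevant box into the pieces $J^1_{n,k}$ and $J^2_{n,k}$) and that pairing against $\dot{X^P}$ rather than $X^P$ requires either passing through the jump-measure representation $\dot{X^P}=M$ of Lemma~\ref{pnoise}, which holds only under \textbf{PAM}, or constructing explicit $\temd$-continuous $d$-fold antiderivatives $\Psi_n$ of $\varphi_n$ with controlled norms in the spirit of Remark~\ref{functional} and Lemma~\ref{capitalphi}. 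I would take the antiderivative route since here $X^P$ need not be a tempered distribution a priori; one checks $\scal{\dot{X^P}(\omega)}{\Psi_n}=\scal{X^P(\omega)}{(-1)^d\Psi_n^{(\mathbf 1_d)}}=\scal{X^P(\omega)}{\varphi_n}$ on a suitable dense set of $\omega$, and that $\m N_p(\Psi_n)$ inherits a polynomial bound in $S_n$ from \eqref{grando} via an estimate like \eqref{estimphi}.
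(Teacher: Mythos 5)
Your proof is correct and follows essentially the same route as the paper's: Corollary \ref{martingalecord} plus Lemma \ref{pnoise} for part \textit{(i)}, and for part \textit{(ii)} the reduction to the trace process $L_t=X^P_{(\mathbf 1_{d-1},t)}$ via Lemma \ref{sequence}, paired against cut-off antiderivatives of the $\varphi_n$ (the paper's $\theta\Phi_n$) whose Schwartz norms are controlled by \eqref{grando} and Lemma \ref{capitalphi}, followed by Proposition \ref{longterm}\textit{(ii)}. The only cosmetic difference is in \textit{(i)}, where the paper concludes $X\in\temd$ a.s.\ from Lemma \ref{pnoise}\textit{(ii)} and then differentiates, while you differentiate each piece of the decomposition first via Lemma \ref{pnoise}\textit{(i)} and \textit{(iii)}; both are immediate, and you correctly identified that the antiderivative route (rather than the jump-measure representation, which needs \textbf{PAM}) is the one needed in \textit{(ii)}.
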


\begin{rem}By Lemma \ref{timedep}, the equivalent condition mentioned in Remark \ref{equiv} remains valid in the $d$-parameter case.
\end{rem}

As mentioned in the Introduction, the first assertion of Theorem \ref{temperedd} was established in \cite[Theorem 3]{fageot} using a different definition of L\'evy white noise. In Proposition \ref{lnoise} below, we show that the two definitions are equivalent.

\begin{proof}[Proof of Theorem \ref{temperedd}]
 To prove \textit{(i)}, by the L\'evy-Itô decomposition (Theorem \ref{lid}), Corollary \ref{martingalecord} and Lemma \ref{pnoise} \textit{(ii)}, we have $\PR\lp \Omega_X\rp=1$. Since derivation maps $\temd$ to itself, we deduce that $\PR\lp \Omega_{\dot X}\rp=1$.

 To prove \textit{(ii)}, suppose that $\dot X$ does not have a \textbf{PAM}. We can use Theorem \ref{lid} to decompose $X$ into the sum of a continuous part $C$, a small jumps part $X^M$ and a compound Poisson part $X^P$. By Corollary \ref{martingalecord}, $\PR \lp \Omega_{C+X^M}\rp=1$. Then we deduce that for all $\omega \in \Omega_{\dot X} \cap \Omega_{C+X^M}$, $\dot X^P(\omega)=\dot X(\omega)-\dot C(\omega)-\dot X^M(\omega)=\dot X(\omega)-\lp C(\omega)+X^M(\omega)  \rp^{(\mathbf 1_d)}$ belongs to $\temd$. The general strategy of the proof is to construct, from the compound Poisson sheet $X^P$, a compound Poisson process that has the same moment properties, and show that when $\dot X^P\in \temd$, this process has polynomial growth at infinity, and this occurs with probability zero by Proposition \ref{longterm}\textit{(ii)}.           
 
 We first examine the noise $\dot X^P$ associated with the compound Poisson part. The jump measure $J_{X^P}( \! \dd s ,\dd x )=\mathds 1_{|x|>1}J_X( \! \dd s ,\dd x)$ of $X^P$  is a Poisson random measure on $ \R_+^d \times \lp\R\backslash \{0\}\rp$  and $J_{X^P}=\sum_{i\s 1}\delta_{\tau_i}\delta_{Y_i}$, where $\tau_i\in \R_+^d$ and $|Y_i|\s 1$. By Lemma \ref{fubinicompact}, for all $\varphi\in \m D(\R^d)$,
\begin{align}\label{eq1}
 \scal{\dot X^P}{\varphi}= \int_{\R_+^d}\int_{|x|>1} x \varphi(t) J_X(\! \dd t , \! \dd x) =\sum_{i\s 1} Y_i \varphi(\tau_i)\ .
\end{align}
By Lemma \ref{sequence}, for all $\omega \in \Omega$, there exists a sequence $(\varphi_n)_{n\s 1}(\omega)$ of smooth compactly supported functions such that \eqref{grando} holds. Furthermore, there is an event $\Omega'\subset \Omega$ with probability one such that there is an integer $N(\omega)$ with the property that for all $n\s N(\omega)$, $X^P$ is constant on the support of $\varphi_n (\omega)$, and \eqref{lsn} holds. Let $L$ be the compound Poisson process defined in Lemma \ref{sequence} by $L_t=X^P_{(\mathbf 1_{d-1}, t)}$. We restrict ourselves to $\omega \in \Omega_{\dot X}\cap \Omega_{C+X^M}\cap \Omega'$, but we drop the dependence on $\omega$ in the following for simplicity of notation. We write $\Phi_n(t)=\int_{[t, +\infty)} \varphi_n(s) \dd s$. Let $\theta \in C^\infty (\R^d)$ be such that $\theta=0$ on the set $\left \{ t\in \R^d : t_1\wedge...\wedge t_d \I -1\right \}$ and $\theta=1$ on the set $\left \{ t\in \R^d : t_1\wedge...\wedge t_d \s -\frac 1 2 \right \}$ and such that all its derivatives are bounded. Then for all $n\s 1, \theta\Phi_n \in \mathcal D(\R^d) \subset \Scd$. So, in particular, for all $n\s 1$, since $\theta$ is constant on $\R_+^d$,
\begin{align*}
 \scal{\dot X^P}{\theta\Phi_n} =(-1)^d \scal{ X^P}{\lp \theta\Phi_n\rp^{(\mathbf 1_d)}}=(-1)^d\scal{ X^P}{\lp \Phi_n\rp^{(\mathbf 1_d)}} = \scal{X^P}{\varphi_n} = L_{S_n} \, ,
\end{align*}
by \eqref{lsn}, and since $\Omega_{\dot X}\cap \Omega_{C+X^M} \subset  \Omega_{\dot X^P}$, we deduce that 
\begin{align}\label{i1}
  \left | L_{S_n} \right | \I C\mathcal N_p(\theta \Phi_n)\, ,
\end{align}
for some real number $C$ and integer $p$ (both depending on $\omega$).  For $\alpha, \beta\in \N^d$, with $|\alpha|, |\beta|\I p$, we estimate $ \sup_{t\in \R^d} \left | t^\alpha  \lp \theta \Phi_n \rp^{(\beta)} \right |$. Since all the derivatives of $\theta$ are bounded,
\begin{align*}
  \sup_{t\in \R^d} \left | t^\alpha  \lp \theta \Phi_n \rp^{(\beta)}(t) \right | = \sup_{t\s -\mathbf 1_d} \left | t^\alpha  \lp \theta \Phi_n \rp ^{(\beta)}(t)\right |
 & = \sup_{t\s -\mathbf 1_d} \left | t^\alpha  \sum_{\gamma \I \beta } \binom{\beta}{\gamma}  \Phi_n^{(\gamma)}(t)\theta ^{(\beta-\gamma)}(t) \right | \\
  &\I C_1 \sum_{\gamma \I \beta } \sup_{t\s -\mathbf 1_d} \left | t^\alpha    \Phi_n^{(\gamma)}(t) \right | \, ,
\end{align*}
for some constant $C_1$ depending only on $p$ and $\theta$. By \eqref{estimphi}, for some constant $C_2$,
\begin{align*}
\sup_{t\s -\mathbf 1_d} \left | t^\alpha    \Phi_n^{(\gamma)}(t) \right |  \mathds 1_{S_n\s 1} \I C_2 \m N_{p+2d}(\varphi_n)  \mathds 1_{S_n\s 1}  \I C_3 S_n^{\tilde p} \mathds 1_{S_n\s 1} \, ,
\end{align*}
by \eqref{grando}, for some constant $C_3$ and $\tilde p$ independent of $n$. Therefore, for any integer $p$, there is an integer $\tilde p$ and a constant $C$ depending only $p$ and $d$, such that
\begin{align}\label{i2}
 \mathcal N_p(\theta \Phi_n) \mathds 1_{S_n\s 1} \I C S_n^{\tilde p} \mathds 1_{S_n\s 1}\, .
\end{align}
We deduce from \eqref{i1} and \eqref{i2} that
\begin{align*}
 \left |\frac{L_{S_n}}{S_n^{\tilde p}}\right | \mathds 1_{S_n\s 1} \I C \mathds 1_{S_n\s 1}<+\infty\, .
\end{align*}
As in the proof of Proposition \ref{growth}, we deduce that for all $\omega \in \Omega_{\dot X}\cap \Omega_{C+X^M} \cap \Omega'$, there exists $  p(\omega) \in \N$ and $C(\omega) \in \R_+$ such that
\begin{align}\label{ineg1}
 \underset{t\to +\infty}{\lim\sup} \frac{|L_{t}|(\omega)}{1+t^{\tilde p(\omega)}}\I C(\omega)<+\infty \, .
\end{align}
Since $L$ is a compound Poisson process with no absolute moment of any positive order (it has the same L\'evy measure as $X^P$) we can now conclude by Proposition \ref{longterm}\textit{(ii)} that  $ \Omega_{\dot X}  \cap \Omega_{C+X^M} \cap \Omega'$ is contained in a set of probability zero. Since $\PR \lp \Omega_{C+X^M} \cap \Omega'\rp=1$, we deduce that $\PR\lp  \Omega_{\dot X} \rp=0$.
\end{proof}

As a consequence of Theorem \ref{temperedd}, we get the following result.
\begin{cor}\label{cor3.15}
 Let $X$ be a $d$-parameter L\'evy field with jump measure $J_X$ and characteristic triplet $(\gamma, \sigma ,\nu)$, and let $\Omega_X$ be the set defined as in \eqref{omega_setd}. 
 \begin{itemize}
 \item[(i)] If there exists $\eta>0$ such that $\E\lp |X_1|^\eta\rp<+\infty$, then $\PR(\Omega_X)=1$.
 \item[(ii)] If for all $\eta>0, \ \E\lp |X_1|^\eta\rp=+\infty$, then $\PR(\Omega_X)=0$.
 \end{itemize}
\end{cor}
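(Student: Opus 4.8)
The plan is to derive both parts directly from material already established, with no genuinely new argument; throughout I read $X_1$ as $X_{\mathbf 1_d}$, matching the notation of Theorem \ref{temperedd}.

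For \textit{(i)}, I would reuse the opening of the proof of Theorem \ref{temperedd}\textit{(i)}. Apply the L\'evy-It\^o decomposition (Theorem \ref{lid}) to write $X_t = Y_t + X^P_t$, where $Y_t = \gamma \leb{d}{[0,t]} + \sigma W_t + X^M_t$ gathers the continuous and small-jump parts and $X^P$ is the compound Poisson sheet of large jumps. Corollary \ref{martingalecord} gives $\PR(\Omega_Y) = 1$. The hypothesis $\E(|X_{\mathbf 1_d}|^\eta) < +\infty$ is equivalent, by Lemma \ref{timedep}, to $\int_{|x|>1}|x|^\eta\,\nu(\dd x) < +\infty$, which is precisely the \pam{} condition required to apply Lemma \ref{pnoise}\textit{(ii)} with truncation level $1$; hence $\PR(\Omega_{X^P}) = 1$. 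Since $X = Y + X^P$ holds pointwise almost surely and, on $\Omega_Y \cap \Omega_{X^P}$, both summands define tempered distributions via the integral convention of \eqref{omega_setd}, the sum of the two functionals represents $X$ as an element of $\temd$; thus $\Omega_Y \cap \Omega_{X^P} \subseteq \Omega_X$ up to a null set, and $\PR(\Omega_X) = 1$.

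For \textit{(ii)}, the point is the inclusion $\Omega_X \subseteq \Omega_{\dot X}$. The $d$-th cross-differentiation $\partial_1\cdots\partial_d$ maps $\temd$ continuously into itself, so if $X(\omega) \in \temd$ then the $\m D'(\R^d)$-distribution $\dot X(\omega)$ of Definition \ref{levy_noise}, which agrees on the dense subspace $\m D(\R^d)$ with the $d$-th cross-derivative of the tempered distribution $X(\omega)$, extends to an element of $\temd$; that is, $\omega \in \Omega_{\dot X}$. Under the assumption $\E(|X_{\mathbf 1_d}|^\eta) = +\infty$ for every $\eta > 0$, Theorem \ref{temperedd}\textit{(ii)} yields $\PR(\Omega_{\dot X}) = 0$, and therefore $\PR(\Omega_X) = 0$.

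I do not expect a real obstacle: the substantive work — the Borel-Cantelli construction of the test functions $(\varphi_n)$ in Lemma \ref{sequence}, the well-posedness of the Poisson-integral representations in Lemma \ref{pnoise}, the moment transfer in Lemma \ref{timedep}, and the growth dichotomy for compound Poisson processes in Proposition \ref{longterm} — has already been done. The only items needing (routine) care are checking that the integral convention in \eqref{omega_setd} is compatible with the decomposition $X = Y + X^P$ in part \textit{(i)} and with passing to the $d$-th cross-derivative in part \textit{(ii)}; both are handled by the density of $\m D(\R^d)$ in $\Scd$ together with the continuity on $\temd$ of the operations involved.
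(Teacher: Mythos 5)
Your proof is correct and follows essentially the same route as the paper: part \textit{(i)} is obtained by combining Corollary \ref{martingalecord} with Lemma \ref{pnoise} via the L\'evy--It\^o decomposition, and part \textit{(ii)} by the inclusion $\Omega_X \subseteq \Omega_{\dot X}$ (differentiation preserves $\temd$) together with Theorem \ref{temperedd}\textit{(ii)}. Your reading of $X_1$ as $X_{\mathbf 1_d}$ and the use of Lemma \ref{timedep} to translate the moment hypothesis are exactly as intended.
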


\begin{proof}
 Property \textit{(i)} follows immediately from Corollary \ref{martingalecord} and Lemma \ref{pnoise}. To prove \textit{(ii)}, by the fact that the derivative of a tempered distribution is a tempered distribution, $\Omega_{X}\subset \Omega_{\dot X}$. By Theorem \ref{temperedd}\textit{(ii)}, we conclude that $\PR(\Omega_X)=0$.
\end{proof}

\begin{rem}
 The statement of Corollary \ref{measurable} extends directly to $d$-dimensional L\'evy white noise, with the same proof.
\end{rem}

We now relate our definition of L\'evy white noise (Definition \ref{levy_noise}) to stochastic integrals, and to \cite[Theorem 3]{fageot}.

\begin{prop}\label{lnoise} Let $\dot X$ be a L\'evy white noise with jump measure $J_X$ and characteristic triplet $(\gamma, \sigma, \nu)$ that has a \textbf{PAM}. 
\begin{itemize}
\item[(i)] For all functions $\varphi \in \Scd$, we have the following equality:
\end{itemize}
\vspace{-1em}
\begin{equation}\label{intsto}
\begin{aligned}
   \scal{\dot X}{\varphi}&=\int_{\R_+^d}\varphi(t) \dd X_t  \\
  &:= \gamma \int_{\R^d_+}\varphi(t)\dd t+ \sigma \int_{\R^d_+}\varphi(t) \dd W_t\\
  &\hspace{1cm} +\int_{\R^d_+}\int_{|x|\s1} x\varphi(t) J_X( \! \dd x, \dd t)+ \int_{\R^d_+}\int_{|x|<1}x\varphi(t) \lp J_X( \! \dd x ,\dd t) - \nu( \! \dd x)\dd t \rp \\
  &=\gamma A_1(\varphi)+ \sigma A_2(\varphi) + A_3(\varphi)+ A_4(\varphi) \, , 
\end{aligned}
\end{equation}
\begin{itemize}
\item[]where the second integral is a Wiener integral (cf. Remark \ref{rem_stochastic_integral}), and the last two are Poisson integrals as defined in \cite[Lemma 12.13]{kallenberg}.
 \item[(ii)] The characteristic functional of the L\'evy white noise is given, for all $\varphi \in \Scd$, by
\begin{align*}
 \E\lp e^{i\scal{\dot X}{\varphi}} \rp = \exp \left[ \int_{\R_+^d} \psi\lp \varphi (t) \rp \dd t \right ]\, ,
\end{align*}
where $\psi$ is the L\'evy symbol of $X$:
$$\psi(z)= i\gamma z-\frac 1 2 \sigma^2 z^2+\int_\R \lp e^{ixz}-1-izx\mathds 1_{|x|\I1}\rp \nu( \! \dd x)\, .$$
\end{itemize}
\end{prop}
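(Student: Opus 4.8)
The plan is to establish part (i) by extending the identity of Lemma \ref{fubinicompact} from $\m D(\R^d)$ to $\Scd$ by a density argument, and then to deduce part (ii) from this stochastic-integral representation together with the independence of the terms of the L\'evy-It\^o decomposition (Theorem \ref{lid}(ii)) and the standard characteristic functionals of Wiener and Poisson integrals.

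For part (i), I would first record that, since $\dot X$ has a \textbf{PAM}, Theorem \ref{temperedd}(i) gives $\PR(\Omega_{\dot X})=1$, so $\scal{\dot X(\omega)}{\varphi}$ is well-defined for every $\varphi\in\Scd$, and on the almost sure event where $X$ is slowly growing (Corollary \ref{cor3.15}) it equals $(-1)^d\int_{\R_+^d}X_t(\omega)\varphi^{(\mathbf 1_d)}(t)\dd t$. Next I would check that the right-hand side of \eqref{intsto} makes sense for $\varphi\in\Scd$: the Lebesgue integral $A_1(\varphi)$ and Wiener integral $A_2(\varphi)$ cause no problem because $\varphi\in L^1(\R^d)\cap L^2(\R^d)$; the large-jump term $A_3(\varphi)=\sum_i Y_i\varphi(\tau_i)$ converges absolutely by the argument of Lemma \ref{pnoise}(i), using \eqref{eq3.12} and \textbf{PAM}; and the compensated small-jump term $A_4(\varphi)$ is well-defined since $\int_{|x|\I1}\int_{\R_+^d}(|x\varphi(t)|^2\wedge|x\varphi(t)|)\dd t\,\nu(\dd x)\I\| \varphi \|_{L^2}^2\int_{|x|\I1}x^2\nu(\dd x)<+\infty$, exactly as in the proof of Lemma \ref{fubinicompact}. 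Both sides of \eqref{intsto} are then continuous in $\varphi$ for the topology of $\Scd$ (the right-hand side in the sense of convergence in probability): the left-hand side because $\dot X(\omega)\in\temd$, and the right-hand side because $A_1$ is $L^1$-continuous, $A_2$ and $A_4$ are $L^2$-continuous by the It\^o isometries, and $A_3$ is continuous by the bound $|A_3(\varphi)|\I\sum_i|Y_i|g_r(\tau_i)$ which holds once $|\varphi|\I g_r$ (notation of Lemma \ref{pnoise}). Since \eqref{intsto} holds on $\m D(\R^d)$ by Lemma \ref{fubinicompact} and $\m D(\R^d)$ is dense in $\Scd$, I would choose $\varphi_n\in\m D(\R^d)$ with $\varphi_n\to\varphi$ in $\Scd$ and pass to the limit on both sides (a.s.\ convergence on the left by dominated convergence against a slowly growing bound on $X$, convergence in probability on the right), obtaining \eqref{intsto} for all $\varphi\in\Scd$.

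For part (ii), I would use part (i) to write $\scal{\dot X}{\varphi}=\gamma A_1(\varphi)+\sigma A_2(\varphi)+A_3(\varphi)+A_4(\varphi)$, where the four summands are independent by Theorem \ref{lid}(ii). The term $\gamma A_1(\varphi)=\gamma\int_{\R_+^d}\varphi(t)\dd t$ is deterministic; the Wiener integral $A_2(\varphi)$ is centered Gaussian with variance $\int_{\R_+^d}\varphi(t)^2\dd t$, contributing a factor $\exp[-\tfrac{1}{2}\sigma^2\int_{\R_+^d}\varphi(t)^2\dd t]$; the Campbell/L\'evy-Khintchine formula for the Poisson integral (see \cite{kallenberg}) gives $\E(e^{iA_3(\varphi)})=\exp[\int_{\R_+^d}\int_{|x|>1}(e^{ix\varphi(t)}-1)\nu(\dd x)\dd t]$, the exponent being finite by the \textbf{PAM} estimate used in part (i); and the analogous formula for the compensated Poisson integral gives $\E(e^{iA_4(\varphi)})=\exp[\int_{\R_+^d}\int_{|x|\I1}(e^{ix\varphi(t)}-1-ix\varphi(t))\nu(\dd x)\dd t]$, with finite exponent since $|e^{ix\varphi(t)}-1-ix\varphi(t)|\I\tfrac{1}{2}x^2\varphi(t)^2$ is integrable on $\{|x|\I1\}$ against $\dd t\,\nu(\dd x)$. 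Multiplying the four characteristic functions, recombining the two jump integrals into $\int_\R(e^{ix\varphi(t)}-1-ix\varphi(t)\mathds 1_{|x|\I1})\nu(\dd x)$, and merging the $\dd t$-integrations, the exponent of the product becomes $\int_{\R_+^d}\psi(\varphi(t))\dd t$, which is the claimed identity.

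The step I expect to be the main obstacle is the continuity/density argument in part (i) for the large-jump term $A_3$, where the \textbf{PAM} hypothesis is indispensable and the tail estimates must be imported from Lemma \ref{pnoise}; once the representation \eqref{intsto} is available for Schwartz test functions, part (ii) reduces to an essentially routine assembly of standard characteristic-functional identities, the only delicate point being the finiteness of the two L\'evy-Khintchine-type integrals, which again rests on \textbf{PAM} for the large jumps.
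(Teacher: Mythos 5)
Your proposal is correct and follows essentially the same route as the paper: extend the identity of Lemma \ref{fubinicompact} from $\m D(\R^d)$ to $\Scd$ by density together with continuity in probability of each term (with \textbf{PAM} controlling the large-jump integral via the estimates of Lemma \ref{pnoise}), then obtain the characteristic functional from the independence of the four terms of the L\'evy--It\^o decomposition. The only cosmetic differences are that the paper obtains $A_3(\varphi)=\tilde A_3(\varphi)$ for all $\varphi\in\Scd$ directly from Lemma \ref{pnoise} rather than by density, and verifies the continuity of $A_4$ through a characteristic-function estimate rather than the $L^2$ isometry.
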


\begin{proof}
 Even without \textbf{PAM}, equality \eqref{intsto} has already been proven in Lemma \ref{fubinicompact} when $\varphi\in \m D(\R^d)$. We now assume that $X$ has a \textbf{PAM} and check first that for $\varphi \in \Scd$, the right-hand side of \eqref{intsto} is well-defined. Since $\Scd\subset L^1(\R^d)\cap L^2(\R^d)$, this is clearly the case for $A_1(\varphi)$ and $A_2(\varphi)$. For $A_3(\varphi)$, using \pam, one checks condition \eqref{condition} as in the proof of Lemma \ref{pnoise}. For $A_4(\varphi)$, one checks condition \eqref{small_jumps_condition} using the same proof as when $\varphi\in \m D(\R^d)$.
 
 We now deduce \eqref{intsto} for $\varphi\in \Scd$ (assuming \textbf{PAM}). By definition,
\begin{align*}
 \scal{\dot X}{\varphi}&= (-1)^d\left [\gamma \int_{\R_+^d} \lp \prod_{i=1}^d t_i \rp \varphi^{(\mathbf 1_d)}(t) \dd t +\sigma  \int_{\R_+^d} W_t \varphi^{(\mathbf 1_d)}(t) \dd t \right. \\
 & \hspace{3cm}+\left.   \int_{\R_+^d} X_t^P\varphi^{(\mathbf 1_d)}(t) \dd t+\int_{\R_+^d} X_t^M \varphi^{(\mathbf 1_d)}(t) \dd t \right ] \\
 &=  \gamma \tilde A_1(\varphi)+ \sigma \tilde A_2(\varphi)+\tilde A_3(\varphi)+\tilde A_4(\varphi)\, .
\end{align*}
The equality $A_3(\varphi)=\tilde A_3(\varphi)$ comes from Lemma \ref{pnoise}. For the other three terms, since $\m D(\R^d)$ is dense in $\Scd$, it suffices to check that $\varphi \mapsto \gamma \tilde A_1(\varphi)+ \sigma \tilde A_2(\varphi)+\tilde A_4(\varphi)$ and $\varphi \mapsto \gamma  A_1(\varphi)+ \sigma  A_2(\varphi)+ A_4(\varphi)$ define continuous (in probability) linear functionals of $\varphi \in \Scd$. For the first, this is obvious because $\dot X\in\temd$ by Theorem \ref{temperedd}. For the second, consider $(\varphi_n)\subset \Scd$ such that $\varphi_n\to 0$ in $\Scd$, hence in $L^1(\R^d)$ and $L^2(\R^d)$. Then $A_1(\varphi_n) \to 0$ and $A_2(\varphi_n)\to 0$ in probability. According to \cite[(2.34) p.27]{cohen},
$$\E\lp \exp \lp iA_4(\varphi)\rp\rp=\exp \left [\int_{\R^d_+}  \int_{|x|<1}\lp e^{ix\varphi(t)}-1-ix\varphi(t) \rp \dd t \nu( \! \dd x) \right ]\, ,$$
for all $\varphi \in \Scd$. By the inequality in \cite[Lemma 5.14]{kallenberg}, 
$$ \left |\int_{\R^d_+}  \int_{|x|<1}\lp e^{ix\varphi_n(t)}-1-ix\varphi_n(t) \rp \dd t \nu( \! \dd x) \right | \I \frac 1 2 \int_{\R^d_+}\varphi_n(t)^2 \dd t \int_{|x|<1}x^2 \nu( \! \dd x)\lims n 0\, .$$
So $A_4$ defines a linear functional on $\Scd$ that is continuous in law at $0$, hence continuous in probability. This completes the proof of \textit{(i)}.

To prove \textit{(ii)}, we use \textit{(i)} and standard results on the characteristic function of Poisson and Wiener integrals. See \cite[Lemma 12.2]{kallenberg} and \cite[(2.34) p.27]{cohen} for the Poisson integrals and \cite[Theorem 1.4.1]{multi} for the Wiener integral.
\end{proof}

\begin{rem}
 In dimension one, we used the map $I$ in Remark \ref{functional} to give an alternate proof of Theorem \ref{tempered}\textit{(ii)}. The analog of this map $I$ in higher dimensions also exists. Let $\theta\in \m D(\R)$ such that $\theta\s 0, \ \emph{supp} \, \theta \subset [0,1]$ and $\int_\R \theta=1$. We write $\tilde \theta= \theta \otimes \dots \otimes \theta$ the $d^\text{th}$-order tensor product of $\theta$ with itself: $\tilde \theta(s_1,\dots ,s_d)=\theta(s_1)\cdots\theta(s_d)$. Let $\varphi\in \Scd$. Define
 \begin{align}\label{f1}
 I_d \varphi (t)= \int_{(-\infty, t]} \! \dd s \int_{\R^d} \! \dd r \tilde{\Delta} _r^s \lp \varphi , \tilde \theta \rp \, ,
\end{align}
where 
\begin{align*}
 \tilde{\Delta} _r^s \lp \varphi , \theta \rp =\sum_{\varepsilon \in \{0,1\}^d}(-1)^{|\varepsilon|} \varphi (c_\varepsilon (r,s)) \tilde \theta (c_{1-\varepsilon}(r,s)) \, ,
\end{align*}
and $c_\varepsilon (r,s)$ was defined just after \eqref{increment}. It is easy to see that if $\varphi= \varphi_1 \otimes \dots \otimes \varphi_d$, where $\varphi_1, \dots , \varphi_d \in \Sc$, then $I_d \varphi= \lp I_1 \varphi_1 \rp \otimes \dots \otimes \lp I_1\varphi_d \rp$, where $I_1$ coincides with the map $I$ of Remark \ref{functional}. Then, since $I$ was built as an antiderivative, for such $\varphi$,
\begin{equation}\label{idfunctional}
 I_d\lp \frac{\partial^d \varphi}{\partial t_1 \cdots \partial t_d}\rp=\varphi \, .
\end{equation}
We have already shown that $I_1$ maps continuously $\Sc$ to itself. We equip $\Sc \otimes \cdots \otimes \Sc$ with the topology $\pi$ generated by the family of semi-norms $\mathcal N_{p_1, \dots , p_d}(\varphi_1 \otimes \cdots \otimes \varphi_d)=\prod_{i=1}^d \m N_{p_i} (\varphi_i)$. Then $I_d: \Sc \otimes \cdots \otimes \Sc \to \Sc \otimes \cdots \otimes \Sc$ is continuous (and then uniformly continuous by linearity). We denote $\Sc \hat \otimes_\pi \cdots \hat \otimes_\pi \Sc$ the completion of $\Sc \otimes \cdots \otimes \Sc$. By \cite[Theorem 51.6]{treves}, $\Sc \hat \otimes_\pi \cdots \hat \otimes_\pi \Sc\simeq \Scd$, therefore $I_d$ extends (by uniform continuity) to a continuous linear map from $\Scd$ to itself. Formula \eqref{idfunctional} is true by linearity for $\varphi \in \Sc \otimes \cdots \otimes \Sc$. Let $\varphi \in \Scd$. There is a sequence $(\varphi_n)_{n\s 1}$ of elements of $\Sc \otimes \cdots\otimes \Sc$ such that $\varphi_n \to \varphi$ in $\Scd$. Since derivation is a continuous map from $\Scd$ to itself, we deduce that \eqref{idfunctional} holds for any $\varphi \in \Scd$. 
\end{rem}


\begin{thebibliography}{10}

\bibitem{adler}
Adler, R.~J., Monrad, D., Scissors, R.~H., and Wilson, R.
\newblock Representations, decompositions and sample function continuity of
  random fields with independent increments.
\newblock {\em Stochastic Process. Appl.}, 15(1):3--30, 1983.

\bibitem{applebaum}
Applebaum, D.
\newblock {\em L\'evy processes and stochastic calculus}, volume~93 of {\em
  Cambridge Studies in Advanced Mathematics}.
\newblock Cambridge University Press, Cambridge, 2004.

\bibitem{cohen}
Cohen, S. and Istas, J.
\newblock {\em Fractional fields and applications}, volume~73 of {\em
  Math\'ematiques \& Applications (Berlin)}.
\newblock Springer, Heidelberg, 2013.

\bibitem{cont}
Cont, R. and Tankov, P.
\newblock {\em Financial modelling with jump processes}.
\newblock Chapman \& Hall/CRC, Boca Raton, FL, 2004.

\bibitem{dalang_levelset}
Dalang, R.~C.
\newblock Level sets and excursions of the {B}rownian sheet.
\newblock In {\em Topics in spatial stochastic processes ({M}artina {F}ranca,
  2001)}, volume 1802 of {\em Lecture Notes in Math.}, pp. 167--208. Springer,
  Berlin, 2003.

\bibitem{walshdalang}
Dalang, R.~C. and Walsh, J.~B.
\newblock The sharp {M}arkov property of {L}\'evy sheets.
\newblock {\em Ann. Probab.}, 20(2):591--626, 1992.

\bibitem{walshdalang_browniansheet}
Dalang, R.~C. and Walsh, J.~B.
\newblock The sharp {M}arkov property of the {B}rownian sheet and related
  processes.
\newblock {\em Acta Math.}, 168(3-4):153--218, 1992.

\bibitem{dinunno}
Di~Nunno, G., {\O}ksendal, B., and Proske, F.
\newblock White noise analysis for {L}{\'e}vy processes.
\newblock {\em J. Funct. Anal.}, 206(1):109--148, 2004.

\bibitem{minlos}
Dobrushin, R.~L. and Minlos, R.~A.
\newblock A study of the properties of generalized {G}aussian random fields.
\newblock In {\em Problems in mechanics and mathematical physics ({R}ussian)},
  pp. 117--165. Izdat. ``Nauka'', Moscow, 1976.

\bibitem{durrett}
Durrett, R.
\newblock {\em Probability: theory and examples}.
\newblock Cambridge University Press, Cambridge, 4th edition, 2010.

\bibitem{fageot}
Fageot, J., Amini, A., and Unser, M.
\newblock On the continuity of characteristic functionals and sparse stochastic
  modeling.
\newblock {\em J. Fourier Anal. Appl.}, 20(6):1179--1211, 2014.

\bibitem{fernique}
Fernique, X.
\newblock Processus lin\'eaires, processus g\'en\'eralis\'es.
\newblock {\em Ann. Inst. Fourier (Grenoble)}, 17(fasc. 1):1--92, 1967.

\bibitem{gelfand}
Gel'fand, I.~M. and Vilenkin, N.~Y.
\newblock {\em Generalized functions. {V}ol. 4}.
\newblock Academic Press [Harcourt Brace Jovanovich, Publishers], New
  York-London, 1964 [1977].

\bibitem{spdeholden}
Holden, H., {\O}ksendal, B., Ub{\o}e, J., and Zhang, T.
\newblock {\em Stochastic partial differential equations - A modeling, white
  noise functional approach}.
\newblock Universitext. Springer, New York, 2nd edition, 2010.

\bibitem{kallenberg}
Kallenberg, O.
\newblock {\em Foundations of modern probability}.
\newblock Springer-Verlag, New York, 2nd edition, 2002.

\bibitem{multi}
Khoshnevisan, D.
\newblock {\em Multiparameter processes - An introduction to random fields}.
\newblock Springer-Verlag, New York, 2002.

\bibitem{davar}
Khoshnevisan, D.
\newblock {\em Analysis of stochastic partial differential equations}, volume
  119 of {\em CBMS Regional Conference Series in Mathematics}.
\newblock American Mathematical Society, Providence, RI, 2014.

\bibitem{klesov}
Klesov, O.
\newblock {\em Limit theorems for multi-indexed sums of random variables},
  volume~71 of {\em Probability Theory and Stochastic Modelling}.
\newblock Springer, Heidelberg, 2014.

\bibitem{lee}
Lee, Y.-J. and Shih, H.-H.
\newblock The {S}egal-{B}argmann transform for {L}\'evy functionals.
\newblock {\em J. Funct. Anal.}, 168(1):46--83, 1999.

\bibitem{shih}
Lee, Y.-J. and Shih, H.-H.
\newblock L\'evy white noise measures on infinite-dimensional spaces: existence
  and characterization of the measurable support.
\newblock {\em J. Funct. Anal.}, 237(2):617--633, 2006.

\bibitem{bernt}
{\O}ksendal, B.
\newblock Stochastic partial differential equations driven by multi-parameter
  white noise of {L}\'evy processes.
\newblock {\em Quart. Appl. Math.}, 66(3):521--537, 2008.

\bibitem{pietsch}
Pietsch, A.
\newblock {\em Nuclear locally convex spaces}.
\newblock Springer-Verlag, New York-Heidelberg, 1972.

\bibitem{sato}
Sato, K.-I.
\newblock {\em L\'evy processes and infinitely divisible distributions},
  volume~68 of {\em Cambridge Studies in Advanced Mathematics}.
\newblock Cambridge University Press, Cambridge, 2013.

\bibitem{distributions_schwartz}
Schwartz, L.
\newblock {\em Th\'eorie des distributions}.
\newblock Publications de l'Institut de Math\'ematique de l'Universit\'e de
  Strasbourg, No. IX-X. Hermann, Paris, 1966.

\bibitem{straf}
Straf, M.~L.
\newblock Weak convergence of stochastic processes with several parameters.
\newblock In {\em Proceedings of the {S}ixth {B}erkeley {S}ymposium on
  {M}athematical {S}tatistics and {P}robability ({U}niv. {C}alifornia,
  {B}erkeley, {C}alif., 1970/1971), {V}ol. {II}: {P}robability theory}, pp.
  187--221. Univ. California Press, Berkeley, Calif., 1972.

\bibitem{treves}
Tr{\`e}ves, F.
\newblock {\em Topological vector spaces, distributions and kernels}.
\newblock Academic Press, New York-London, 1967.

\bibitem{spdewalsh}
Walsh, J.~B.
\newblock An introduction to stochastic partial differential equations.
\newblock In {\em \'{E}cole d'\'et\'e de probabilit\'es de {S}aint-{F}lour,
  {XIV}---1984}, volume 1180 of {\em Lecture Notes in Math.}, pp. 265--439.
  Springer, Berlin, 1986.

\end{thebibliography}
\end{document}